\numberwithin{equation}{section}   
\newtheorem{theorem}{Theorem}[section]   
\newtheorem{proposition}[theorem]{Proposition}   
\newtheorem{lemma}[theorem]{Lemma}   
\newtheorem{corollary}[theorem]{Corollary}   
\newtheorem{defn}[theorem]{Definition}   
\newtheorem{question}{Question}   
\theoremstyle{definition}  
\newtheorem{example}[theorem]{Example}   
\newtheorem{remark}[theorem]{Remark}
\newcounter{FNC}[page]   
\def\fauxfootnote#1{{\addtocounter{FNC}{2}\Magenta{$^\fnsymbol{FNC}$}%   
     \let\thefootnote\relax\footnotetext{\Magenta{$^\fnsymbol{FNC}$#1}}}}   
\DeclareMathAlphabet{\mathpzc}{OT1}{pzc}{m}{it}   
\newcommand{\K}{{\mathbb K}}   
\newcommand{\R}{{\mathbb R}}   
\newcommand{\C}{{\mathbb C}}   
\newcommand{\N}{{\mathbb N}}   
\newcommand{\Q}{{\mathbb Q}}   
\newcommand{\PP}{{\mathbb P}}   
\newcommand{\HH}{{\mathbb H}}  
\newcommand\maj{\unlhd}  
\newcommand\SHsym{\mathcal{O}}                   % Schur-Horn orbitope  
\newcommand\Sfunc{\mathcal{L}}                % Schur functor  
\newcommand\SH{\SHsym}  
\newcommand{\stab}{\mbox{\rm stab}}           % Affine Hull  
\newcommand{\cara}{\mathfrak{c}}              % Caratheodory number  
\newcommand{\BPi}{\Pi^s}  
\newcommand\s{\sigma}                         % sigma  
\newcommand\ts{\tilde{\sigma}}                % sigma tilde  
\newcommand\Sym{\mathrm{Sym}}                 % symmetric power  
\newcommand\sym{\Sym_2\R^n}                   % symmetric matrices  
\renewcommand\skew{\wedge_2\R^n}              % skew-symmetric matrices  
\newcommand{\aff}{\mbox{\rm aff}}              % Affine Hull  
\newcommand{\conv}{\mbox{\rm conv}}            % Convex Hull  
\newcommand{\Tr}{\mbox{\rm Tr}}                % Trace  
\newcommand{\Hom}{\mbox{\rm Hom}}              % Homomorphisms  
\newcommand{\End}{\mbox{\rm End}}              % Endomorphisms  
\newcommand{\SEnd}{\mbox{\scriptsize\rm End}}  % Scriptsize \End.  
\newcommand{\bdt}{\mbox{\huge\rm .}}
\newcommand{\calO}{{\mathcal O}}  
\newcommand\Cara{\mathcal{C}}               % Caratheodory orbitope   
\newcommand\h{h}                            % support function   
\renewcommand\l{\lambda}                      % l  
\newcommand\hl{\tilde{\lambda}}                      % l  
\newcommand\sphere{\mathbb{S}}              % sphere  
\newcommand\hcube{\mathsf{HC}}              % Halfcube   
\newcommand\diag{\mathsf{D}}                % diagonal map   
\newcommand\Sdiag{\mathsf{SD}}               % skew-diagonal map   
\newcommand\inner[1]{\langle #1 \rangle}    % inner product   
\title{\larger Orbitopes}   
\author{Raman Sanyal}   
\address{Raman Sanyal\\   
	Department of Mathematics\\   
        University of California\\   
        Berkeley, California 94720 \\   
        USA}   
\email{sanyal@math.berkeley.edu}   
\thanks{Research of Sanyal supported by a Miller Research Fellowship 
at UC Berkeley} 
\author{Frank Sottile}   
\address{Frank Sottile\\   
         Department of Mathematics\\   
         Texas A\&M University\\   
         College Station\\   
         Texas \ 77843\\   
         USA}   
\email{sottile@math.tamu.edu}   
\thanks{Research of Sottile supported in part by NSF grants DMS-070105 and DMS-1001615}   
\author{Bernd Sturmfels}   
\address{Bernd Sturmfels\\   
	Department of Mathematics\\   
        University of California\\   
        Berkeley, California 94720 \\   
        USA}   
\email{bernd@math.berkeley.edu}   
\thanks{Research of Sturmfels supported in part by NSF grant DMS-0757207}   
\begin{document}

\begin{abstract}   
 An orbitope is the convex hull of an orbit of a compact group acting    
 linearly on a vector space.  These highly symmetric convex bodies lie  
at the crossroads of several fields including 
 convex geometry, algebraic geometry,  and optimization.   
We present a self-contained theory of orbitopes with particular  
emphasis on instances arising from  
the groups $SO(n)$ and $O(n)$.  
These include  
Schur-Horn orbitopes, tautological orbitopes,  
Carath\'eodory orbitopes, Veronese orbitopes, and Grassmann orbitopes.  
We study their face lattices, their algebraic  
boundaries, and  
representations as spectrahedra or projected spectrahedra.  
\end{abstract}   
   
\maketitle   
%%%%%%%%%%%%%%%%%%%%%%%%%%%%%%%%%%%%%%%%%%%%%%%%%%%%%%%%%%%%%%%%%%%%%%%%%%%%%   
  
\section{Introduction}  
  
An \emph{orbitope} is the convex hull of an orbit   
of a compact algebraic group $G$ acting linearly on a real vector space.  
The orbit has the structure of a real algebraic  
variety, and the orbitope is a convex, semi-algebraic set. Thus, the study of  
algebraic orbitopes lies at the heart of \emph{convex algebraic geometry} --  
the fusion of convex geometry and (real) algebraic geometry.  
  
Orbitopes have appeared in many contexts in mathematics and its applications. 
Orbitopes of finite groups are highly  symmetric convex 
polytopes which include the platonic solids, permutahedra, Birkhoff 
polytopes, and other favorites from Ziegler's text book \cite{Ziegler}, as well as the
Coxeter orbihedra studied by McCarthy, Ogilvie, Zobin, and Zobin~\cite{Zobin}.
Farran and Robertson's regular convex bodies~\cite{FR94} are orbitopal generalzations of
regular polytopes, which were classified by Madden and Robertson~\cite{MR95}.
%
%Orbitopes gives rise to generalizations of regular polytopes as shown by 
%Farran and Robertson \cite{FR94}. See Madden and Robertson \cite{MR95} for 
%a classification of regular convex bodies.  
%
Orbitopes for compact Lie groups, such as $SO(n)$, have 
appeared in investigations ranging from  protein structure 
prediction~\cite{LSS08} and quantum information~\cite{AII06} to calibrated 
geometries~\cite{HL1982}.  Barvinok and Blekherman  studied the volumes of 
convex bodies dual to certain $SO(n)$-orbitopes, and they concluded that there 
are many more  non-negative polynomials than sums of squares \cite{BB}.  
 
This paper initiates the study of orbitopes as geometric objects in  
their own right. The questions we ask about orbitopes originate  
from  three different perspectives: {\em convexity},   
{\em algebraic geometry}, and {\em optimization}.  
In convexity, one would seek to characterize all  faces of an orbitope.  
In algebraic geometry, one would examine the  
Zariski closure of its boundary and identify  
the  components and singularities of that hypersurface.  
In optimization, one would ask whether the orbitope is a spectrahedron  
or the projection of a spectrahedron.  
  
Spectrahedra are to semidefinite  
programming what polyhedra are to linear programming.   
More precisely, a {\em spectrahedron} is  
the intersection of  the cone of positive semidefinite  
matrices with an affine space. It can be represented  
as the    set of points $x \in \R^n$ such that  
\begin{equation}\label{Eq:LMI}   
    A_0 + x_1 A_1 +  \dotsb + x_n A_n  
\, \,   \succeq\ \, 0\,,   
\end{equation}   
where $A_0,A_1,\ldots, A_n$ are symmetric matrices and $\succeq 0$ denotes positive  
semidefiniteness.   
From a spectrahedral description many geometric properties,  
both convex and algebraic, are within reach. Furthermore,  
if an orbitope admits a representation (\ref{Eq:LMI})  
then it is easy to maximize or minimize a linear function over that orbitope.  
Here is a simple illustration.  
  
\begin{example} \label{ex:HankelSmall}  
Consider the action of the group $G = SO(2)$ on the space  
${\rm Sym}_4(\R^2) \simeq \R^5$ of binary quartics and take the convex hull  
of the orbit of $v = x^4$.   
The four-dimensional convex body ${\rm conv}(G \cdot v)$  
is a {\em Carath\'eodory  orbitope}. This orbitope 
 is a spectrahedron: it coincides with the set of all binary quartics   
$\,\lambda_0 x^4 + 4 \lambda_1 x^3 y + 6 \lambda_2 x^2 y^2  
+ 4 \lambda_3 x y^3 + \lambda_4 y^4\,$ such that 
\begin{equation} \qquad  
\label{3by3hankel}  \begin{pmatrix}  
 \lambda_0 & \lambda_1 & \lambda_2 \\  
 \lambda_1 & \lambda_2 & \lambda_3 \\  
 \lambda_2 & \lambda_3 & \lambda_4   
 \end{pmatrix} \,\, \succeq \,\, 0   
 \qquad \hbox{and} \qquad  
\lambda_0 + 2 \lambda_2 + \lambda_4 = 1.  
\end{equation}  
This representation (\ref{3by3hankel}) will be derived in Section 5, 
where we will also see that it is equivalent to classical results from 
 the theory of positive polynomials \cite{Rez}.  
The Hankel matrix shows that the boundary of ${\rm conv}(G \cdot v)$ is an irreducible  
cubic hypersurface in $\R^4$, defined by the  
vanishing of the Hankel determinant.   
It also reveals that this  
four-dimensional Carath\'eodory  orbitope is $2$-neighborly:  
the extreme points are the rank one matrices, and any two of them are  
connected by an edge. The typical intersection  
of ${\rm conv}(G \cdot v)$ with a three-dimensional affine plane looks like   
an inflated tetrahedron.  
This three-dimensional convex body is bounded by  
{\em Cayley's cubic surface}, shown in  
Figure~\ref{F:fat_tetrahedron}.  
Alternative pictures of this convex body can be found in  
\cite[Fig.~3]{Ranestad} and \cite[Fig.~4]{SU}.  
The four vertices of the tetrahedron lie on the curve $G \cdot v$, and its  
six edges are inclusion-maximal faces of ${\rm conv}(G \cdot v)$.   
\qed  
\end{example}  
  
\begin{figure}[htb]  
\[  
    \includegraphics[height=1.9in]{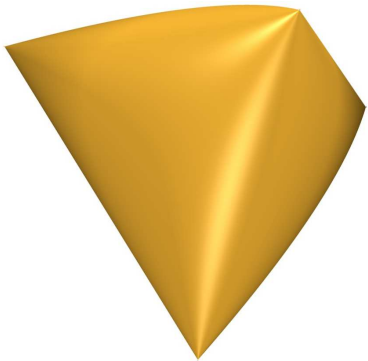}  
\]  
\vskip -0.3cm  
\label{F:fat_tetrahedron}  
\caption{Cross-section of a four-dimensional Carath\'eodory orbitope.}  
\end{figure}  
  
This article is organized as follows. We begin by deriving the  
basic definitions and a few general results about orbitopes,   
and we formulate {\bf ten key questions} which will guide  
our subsequent investigations. These are  organized along the themes of  
convex geometry (Subsection 2.1), algebraic geometry (Subsection 2.2)  
and optimization (Subsection 2.3).  
These questions are difficult. They are meant to  
motivate the reader and to guide our investigations. 
We are not yet able to offer a complete solution to any of these ten problems. 
  
Section 3 is concerned with the action of $O(n)$ and $SO(n)$  
by conjugation on $n {\times} n$-matrices.  
These decompose into the actions by conjugation on symmetric and  
skew-symmetric matrices. The resulting {\em Schur-Horn orbitopes}  
are shown be spectrahedra, their algebraic boundary is computed, and  
their face lattices are derived from certain polytopes known as  
permutahedra. The spectrahedral representations of the  
Schur-Horn orbitopes, stated in   
Theorems \ref{thm:symspec} and \ref{thm:skewsymspec},  
rely on certain Schur functors also known as  the {\em additive compound matrices}.  
  
Given a compact real algebraic group of $n {\times} n$-matrices,  
we can form its convex hull in $\R^{n \times n}$. The resulting  
convex bodies are the {\em tautological orbitopes}.  
In Subsection 4.2 we study the  
tautological orbitope and its dual coorbitope for  
the group $O(n)$. Both are spectrahedra,   
characterized by constraints on singular values,  
and they are unit balls for the operator and nuclear matrix norm  
considered in \cite{Fazel07}.  
Subsections 4.1 and 4.4 are devoted to the  
tautological orbitope for $SO(n)$.  
A characterization of its faces is given in  
Theorem \ref{thm:SOntauto}.  
  
The orbitopes of the group $SO(2)$ are the convex hulls  
of trigonometric curves, a classical topic initiated by  
Carath\'eodory in \cite{Car}, further developed  
in \cite{BN, Smi}, and our primary focus in Section 5.  
These $SO(2)$-orbitopes can be  
represented as projections of spectrahedra  
in two distinct ways: in terms of  
Hermitian Toeplitz matrices  
or in terms of Hankel matrices.  
  
A natural generalization of the rational normal curves  
in Section 5 are the Veronese varieties. Their convex  
hulls, the {\em Veronese orbitopes}, are dual to the  
cones of polynomials that are non-negative on $\mathbb{R}^n$,  
as seen in \cite{BB, Ble, Rez}. In Section 6 we undertake  
a detailed study of the   
$15$-dimensional Veronese orbitope and its  
 coorbitope which arise from  
ternary quartics.  
  
In Section 7 we mesh our investigations with a line of research  
in differential geometry.  
The {\em Grassmann orbitope} is the convex hull  
of the oriented Grassmann variety  in its Pl\"ucker embedding  
in the unit sphere in  $\wedge_d \R^n$. This vector space is 
the $d$-th exterior power of $\R^n$ and it is isomorphic to 
$\R^{\binom{n}{d}}$. 
The facial structure of Grassmann orbitopes has been  studied   
in the theory of calibrated manifolds \cite{HL1982,HarMor86, Mor85}.  
We take a fresh look at these orbitopes from the  
point of view of convex algebraic geometry.  
Theorem \ref{thm:G2n} furnishes a spectrahedral  
representation and the algebraic boundary in the special case $d=2$,  
while Theorem  \ref{thm:G36bad} shows that   
the Grassmann orbitopes fail to be spectrahedra in general.

%%%%%%%%%%%%%%%%%%%%%%%%%%%%%%%%%%%%%%%%%%%%%%%%%%%%%%%%%%%%%%%%%%%%%%%%%%%%%   
\section{Setup, Tools and Questions}  
\label{sec:Setup}  
  
Let $G$ be a real, compact, linear algebraic group, that is, a compact  
subgroup of $GL(n,\R)$ for some $n \in \N$   
given as a subvariety. Prototypic examples   
are the {\em special orthogonal group} $\,SO(n) = \{ g \in GL(n,\R) : gg^T =   
{\rm Id}_n  
\,\,{\rm and}\, \,{\rm det}(g) = 1 \}\,$ and the {\em unitary group}  
 $\,U(n) = \{ g \in GL(n,\C) : g\overline{g}^T = {\rm Id}_n  
\}$. A real representation of $G$ is a group homomorphism $\rho : G  
\rightarrow GL(V)$ for some finite dimensional real vector space $V$. We will  
write $g \cdot v := \rho(g)(v)$ for $g \in G$ and $v \in V$.  
 The representation is {\em rational} if $\rho$ is a rational map of algebraic  
varieties. By choosing an inner product $\inner{\,\cdot \,,\, \cdot\,}$ on $V$,  
we may define a $G$-invariant inner product as  
\begin{equation}\label{Eq:G-inv_form}  
    \inner{ v, w }_{G} \,\,: = \,\, \int_G \inner{g\cdot v, g\cdot w}\, d\mu.  
\end{equation}  
Here $\mu$ denotes the {\em Haar measure}, which is the unique $G$-invariant  
probability measure on $G$.   
Choosing coordinates on $V$ so that $\inner{ \cdot, \cdot }_G$ becomes the  
standard inner product on $V \simeq \R^m$, we can identify   
the matrix group $G$ with  
a subgroup of  the {\em orthogonal group} $\,O(m) =   
 \{ g \in GL(m,\R) : gg^T = {\rm Id}_m \}$.  
  
The {\em orbit} of a vector $v \in V$ under the compact group $G$ is the set  
$\,G\cdot v = \{ g \cdot v : g \in G\}$.  
This is a  bounded subset of $V$.  
The orbit $\,G \cdot v\,$ is a smooth, compact real algebraic  
variety of dimension  
\[  
    \dim G\cdot v  \,\,= \,\,\dim G - \dim \stab_G(v).  
\]  
Here $\,\stab_G(v) = \{ g \in G : g\cdot v = v \}\,$ is the {\em stabilizer} of the vector $v$. In  
particular, the orbit is isomorphic as a $G$-variety to the compact homogeneous space   
$\,G / \stab_G(v)$.  
  
The \emph{orbitope} of $G$ with respect to the vector $v \in V$ is  
the semialgebraic convex body  
\[  
    \conv(G \cdot v) \,\,= \,\, \conv \{ g \cdot v\, : \, g \in G \} \,\,\, \subset  \,\,\, V.  
\]  
We tacitly assume that the group $G$ and its representation  
$\rho$ are clear from the context and  
sometimes we write $\calO_v$ or $\calO$ for $\conv(G \cdot v)$.  
The dimension of an orbitope $\conv(G\cdot v)$ is the dimension of the affine  
hull of the orbit $G\cdot v$.    
  
For small $n$, the connected subgroups of the orthogonal group $O(n)$ are known.  
This leads to the following census of low-dimensional orbitopes of connected groups.  
  
\begin{example}[The orbitopes of dimension at most four arising from connected  
    groups]   
$\qquad$  
{\rm   
    We identify $G$ with a subgroup of $SO(n)$.  
We  assume $\calO = \conv(G \cdot  v)$ is an $n$-dimensional orbitope in $\R^n$.    
This implies that $G$ fixes no non-zero vector.  
    Here is our census:  
  
\smallskip  
  
{\sl $n=1$:} There are no one-dimensional orbitopes because  
$SO(1)$ is a point.  
  
\smallskip  
\noindent  
It is known that every proper connected subgroup of $SO(2)$ or $SO(3)$  
fixes a non-zero vector, so for $n \leq 3$, the  
subgroup $G$ must be equal to $SO(n)$.  
This establishes the next two cases:  
  
\smallskip  
  
{\sl $n=2$:} The only orbitopes in $\R^2$ are the discs  
$\{ (x,y) \in \R^2 : x^2 + y^2 \leq r^2 \}$.  
  
{\sl $n=3$:} The only orbitopes in $\R^3$ are the balls  
$\{ (x,y,z) \in \R^3 : x^2 + y^2 + z^2 \leq r^2 \}$.  
  
\smallskip  
  
\noindent The four-dimensional case is where things begin to get interesting:  
  
\smallskip  
  
{\sl $n=4$:} The group $SO(4)$ has   
connected subgroups $G$ of dimension $1$, $2$, $3$ and $6$.  
\begin{itemize}  
\item If ${\rm dim}(G) = 6$ then $G = SO(4)$ and the orbitopes are balls in $\R^4$.  
\item If ${\rm dim}(G) = 3$ then $G \simeq SU(2)$, acting as the unit quaternions  
on all quaternions, $\HH=\R^4$, by either left or right multiplication.  
Here the orbitopes are also balls in $\R^4$.  
\item If ${\rm dim}(G) = 2$ then $G \simeq SO(2)\times SO(2)$.  
 These tori act on $\R^4$ through an orthogonal direct sum decomposition   
 $\R^2\oplus \R^2$  
 and their orbitopes are products of two discs.  
\item If ${\rm dim}(G) = 1$ then $G \simeq SO(2)$ and we obtain four-dimensional  
orbitopes that are isomorphic, for some positive integers $a < b$, to the  
 Carath\'eodory orbitopes  
$$  
   \Cara_{a,b}\ :=\  \conv   
   \,\bigl\{(\cos at, \sin at, \cos bt, \sin bt) \in \R^4 \mid t\in[0,2\pi]\bigr\}\,.  
$$  
These orbitopes were introduced one century ago by  
Carath\'eodory \cite{Car}. Their study was picked up  
in the 1980s by Smilansky~\cite{Smi} and recently by  
Barvinok and Novik \cite{BN}. We note that  
$\Cara_{1,2}$ is affinely isomorphic to the Hankel orbitope in Example~\ref{ex:HankelSmall}.  \qed  
\end{itemize}}  
\end{example}  
  
To compute the dimensions of orbitopes in general we shall need a pinch of  
representation theory \cite{FH}.  A representation $V$ of the  group  
$G$ is {\em irreducible} if its only subrepresentations are $\{0\}$ and $V$.  
If $V$ and $W$ are irreducible representations, then the space $\Hom_G(V,W)$  
of equivariant linear maps between them is zero unless $V\simeq W$.  Schur's  
Lemma states that $\End_G(V):=\Hom_G(V,V)$ is a division algebra over $\R$,  
that is, either $\R$, $\C$, or $\HH$.  If $W_1,W_2,\dotsc$ is a complete list  
of distinct irreducible representations of $G$, and $V$ is any representation  
of $G$, then we have a canonical decomposition into isotypical representations:  
\begin{equation}  
\label{eq:isotypical}  
   \bigoplus_{i\geq 0} \Hom_G(W_i,V)\otimes_{\SEnd_G(W_i)} W_i\   
\xrightarrow{\  \simeq \ }\ V.  
\end{equation}  
This is an isomorphism of $G$-modules.  
The map (\ref{eq:isotypical}) on each summand is $(\varphi,w)\mapsto \varphi(w)$.  
The image of  
the $i$th summand in $V$ is called the {\em $W_i$-isotypical component  
of $V$}, and when it is non-zero, we say that  
the irreducible representation $W_i$ {\em appears} in  
the $G$-module $V$.  
We say that $V$ is {\em multiplicity-free} if each irreducible representation $W_i$
appears in $V$ at most once, so that $\Hom_G(W_i,V)$ has rank 1 or 0 over $\End_G(W_i)$. 
  
Suppose that $V$ contains the trivial representation and write $V=\R^l\oplus  
V'$, where $\R^l$ is the trivial isotypical component of $V$ and $V'$ does not  
contain the trivial representation.    
Any vector $v\in V$ can be written as  
$v=v_0\oplus v'$, where $v_0\in\R^l$ and $v'\in V'$.  Then  
\[  
    G\cdot v\ =\ v_0\oplus G\cdot v'  
    \qquad\mbox{and}\qquad  
    \conv(G\cdot v)\ =\ v_0\oplus \conv(G\cdot v')\,.  
\]     
Thus we lose  no geometric information in assuming that $V$ does not contain the trivial  
representation. This property ensures that the linear span of an orbit coincides  
with its affine span. Hence the affine span of an orbitope decomposes   
along its isotypical components:  
\[  
   \aff(G\cdot v)\ =\ \bigoplus_{i \geq 0} \aff(G\cdot v_i)\,  
      \qquad \hbox{for vectors} \,\,\,  
   v = \oplus_i v_i \,\in V\,\,\, \hbox{as in (\ref{eq:isotypical}).}  
\]  
To determine the dimension of $\aff(G\cdot v)$ for $v$ in a single isotypical  
component $V$ we proceed as follows. Let $V = W^l$ with $W$ irreducible.  
Then $v=(w_1,\dotsc,w_l)$ and the affine span of $G\cdot v$ is isomorphic to  
$W^k$, where $k$ is the rank of the $\End_G(V)$-module spanned by  
$w_1,\dotsc,w_l$. Hence, the dimension of $\aff(G \cdot v)$  
over $\R$ equals $k \cdot \dim W$.  
In particular, $k\leq \dim_{\SEnd_G(V)}(W)$. 
If $V$ is multiplicity free and $v$ has a nonzero projection into each isotypical
component of $V$, then $\dim \conv(G\cdot v) = \dim V$. 

We see this in the Carath\'eodory orbitopes for the group $SO(2)$ of $2\times 2$ rotation
matrices.
Its nontrivial representations are $W_a\simeq \R^2$, where a rotation matric acts through
its $a$th power, for $a>0$.
Let $\Cara_{a,b}$ be the orbitope of $SO(2)$ with respect to a general vector 
$v\in W_a\oplus W_b$.
If $a\neq b$, then $V$ has two isotypical components and  
$\Cara_{a,b}$ has dimension four. If $a=b$,   
then $V \simeq W_a^2$ consists of a  
single isotypical component and $\Cara_{a,a}$ is   
two-dimensional, as $\End_{SO(2)}(\R^2)=\C$, the span of $(w_1,w_2)$ is complex
one-dimensional.
  
%%%%%%%%%%%%%%%%%%%%%%%%%%%%%%%%%%%%%%%%%%%%%%%%%%%%%%%%%%%%%%%%%%%%%%  
\subsection{Convex geometry}  
  
Orbitopes are convex bodies, and it is natural to begin their  
study from the perspective of classical convexity.  
A point $p$ in a convex body $K \subset V$ is an {\em extreme point}  
if $\conv(K\backslash \{p\}) \not= K$.   
Thus, the set $E$ of extreme points of $K$ is the minimal subset satisfying  
${\rm conv}(E) = K$. An extrinsic description of $K$ is given by its   
\emph{support function}   
\[  
   h(K,\, \cdot\,) : V^* \rightarrow \R, \,  
   \,\ell\, \mapsto\,  
    h(K,\ell)\, := \,\max \{ \ell(x) : x \in K \}.   
\]  
In terms of the support function, the convex body $K$ is the set of points $x  
\in V$ such that $\ell(x) \le h(K;\ell)$ for every $\ell \in V^*$. Each  
linear functional $\ell \in V^*$ defines an {\em exposed face} of $K$:  
\[  
    K^\ell\,\,\, = \,\,\,\{ \,p \in K : \ell(p) = h(K;\ell) \, \}.  
\]  
An exposed face $K^\ell$ is itself a convex body of dimension $\dim  
\aff(K^\ell)$. An exposed face of dimension $0$ is called an \emph{exposed  
point} of $K$. It follows that every exposed point is extreme, but the  
inclusion is typically strict. However, for orbitopes, these two notions  
coincide.  
  
\begin{proposition}  
    Every point in the orbit $\,G\cdot v\,$ is exposed in   
its convex hull.  In  particular, every extreme point of the  
orbitope $\,\conv(G \cdot v)\,$ is an exposed point.  
\end{proposition}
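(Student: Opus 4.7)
The plan is to expose each orbit point by the natural linear functional it defines via the $G$-invariant inner product. Fix $w = g\cdot v$ in the orbit and consider the linear functional
\[
   \ell(x)\ :=\ \inner{x,w}_G
\]
on $V$, where $\inner{\,\cdot\,,\,\cdot\,}_G$ is the $G$-invariant inner product from~\eqref{Eq:G-inv_form}. I would show that $\ell$ attains its maximum over $\conv(G\cdot v)$ uniquely at $w$.

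Since $\ell$ is linear, its maximum on $\conv(G\cdot v)$ is attained on the orbit, so it suffices to compare $\ell(g'\cdot v)$ with $\ell(w)=\inner{w,w}_G$ for $g'\in G$. By construction the $G$-action preserves $\inner{\,\cdot\,,\,\cdot\,}_G$, so every orbit point has the same norm: $\|g'\cdot v\|_G = \|v\|_G = \|w\|_G$. The Cauchy--Schwarz inequality then gives
\[
   \inner{g'\cdot v,\, w}_G \ \leq\ \|g'\cdot v\|_G\,\|w\|_G\ =\ \|w\|_G^2\ =\ \inner{w,w}_G,
\]
with equality if and only if $g'\cdot v$ is a non-negative multiple of $w$; combined with the equality of norms this forces $g'\cdot v = w$. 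Hence the exposed face $\calO^\ell$ meets the orbit $G\cdot v$ only at $w$.

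To conclude that $\calO^\ell = \{w\}$ I would invoke the standard fact that the extreme points of the convex hull of a compact set $S$ lie in $S$ itself; applied to $S=G\cdot v$, this gives $\extreme(\calO)\subseteq G\cdot v$. The face $\calO^\ell$ is itself a compact convex body, and by Krein--Milman it equals the convex hull of its extreme points, which are extreme points of $\calO$ lying in $\calO^\ell\cap G\cdot v = \{w\}$. Therefore $\calO^\ell=\{w\}$, so $w$ is an exposed point.

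The second assertion then follows immediately: if $p$ is any extreme point of $\calO$, then $p\in G\cdot v$ by the same compactness argument, and the construction above produces a supporting hyperplane exposing $p$. There is no real obstacle here; the only point that deserves care is the equality case in Cauchy--Schwarz, which works cleanly precisely because the $G$-action is by isometries in the averaged inner product, so every orbit point lies on a common sphere about the origin.
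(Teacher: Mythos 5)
Your proof is correct and is essentially the paper's argument: both rest on the fact that $G$ acts by isometries for the averaged inner product, so the orbit lies on a sphere centered at the origin, and each sphere point is exposed by the linear functional $\inner{\,\cdot\,,w}_G$. The paper simply cites ``every point of the sphere is exposed'' where you unwind that claim via Cauchy--Schwarz and the Krein--Milman step; the content is the same.
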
  
  
\begin{proof}  
    Since $G$ acts orthogonally on $V$, the orbit $\,G\cdot v\,$ lies   
entirely in the sphere in $V$ that is centered at $0$ and contains the point  
 $v$.  As every point of the sphere is exposed, the  
    entire orbit consists of exposed points and hence extreme points.   
\end{proof}  
  
A closed subset $F \subseteq K$ is a \emph{face} if $F$ contains  
the two endpoints to any open segment in $K$ it intersects.    
This includes $\emptyset$ and $K$ itself.  An inclusion-maximal proper face of $K$ is  
called a \emph{facet}.  By separation, every face is contained in an exposed  
face and thus facets are automatically exposed. In general, every exposed face  
is a face but not conversely.  
  
\begin{question}  
    When does an orbitope have only exposed faces?  
\end{question}  
  
The exposed faces of a convex body form a partially ordered set with respect 
to inclusion, called the \emph{face lattice}. The face lattice is atomic but 
in general not coatomic as was pointed out to us by Stephan Weis.  A necessary 
condition is that the polar body (see below) has all faces exposed 
(cf.~\cite{Weis}).  Also, it is generally not graded because ``being a face 
of'' is not a transitive relation. For example, the four-dimensional 
Barvinok-Novik orbitope in Section~\ref{S:Caratheodory} has triangular exposed 
faces for which the three vertices are exposed but the edges are not.  
Similar behavior is seen in the convex body on the right of Figure~\ref{F:trig_curves},
which has two triangular exposed facets whose edges and two of three vertices are not
exposed. 
  
\begin{question} \label{question2}  
    Describe the face lattices of orbitopes.  
\end{question}  
  
For an orbitope $\calO = \conv( G\cdot v)$, the faces come in $G$-orbits  
and these $G$-orbits come in algebraic families. In particular, the  
zero-dimensional faces come in a family parametrized by $G$.  
The description of these families is the point of Question \ref{question2}.   
For instance, the orbitope in Example~\ref{ex:HankelSmall} is a  
four-dimensional, two-neighborly convex body.   
Its exposed points are parametrized by the circle  
$\sphere^1$ and the edges come in a two-dimensional family.  
 
The {\em polar body}  
\[  
\calO^\circ \,\,= \,\, \{ \ell \in V^* : \h(\calO;\ell) \le 1 \}  
\]  
is called the \emph{coorbitope} of $G$ with respect to $v\in V$.  
Our assumption that $V$ does not contain the trivial representation  
ensures that $0$ is the centroid of $\calO$, and this implies  
$\,(\calO^\circ)^\circ = \calO$.  
We shall also make use of the cone over the coorbitope   
$\calO^\circ$. This is the \emph{coorbitope cone}  
\begin{equation}  
\label{eq:coorbitopecone}  
 \widehat{\calO^\circ} \,\, =\,\,   
 \bigl\{ (\ell,m) \in V^* \oplus \R \,: \,\h(\calO;\ell) \le m \bigr\}.  
\end{equation}  
  
For a convex body $K$ the assignment $\|x\|_K := \inf \{ \lambda >  0 :  
\lambda x \in K \}$ defines an \emph{(asymmetric) norm} on $V$ with unit ball  
$K$.  
If $K$ is centrally-symmetric with respect to the origin, then $\| \cdot \|_K$  
is an actual norm.  In that case   
the polar body $K^\circ$ is also centrally-symmetric and  
$\|\cdot\|_{K^\circ}$ is  
the {\em dual norm}.  Norms and support functions are related by  
\[  
    \|\ell\|_{K^\circ} \,\, = \,\, h(K;\ell)  
\, \quad \hbox{ for all $\ell \in V^*$.}  
\]  
 In particular, if $K$ is an orbitope, then $\| \cdot \|_K$  
and  $\|\cdot\|_{K^\circ}$ are $G$-equivariant norms.  
  
Every point $p$ in a convex body $K$ is  
in the convex hull of finitely many extreme points.  
We denote by $d_p$ the least cardinality of  a set $E$ of extreme points  
with $p \in \conv(E)$. We call $\cara(K) := \sup \{ d_p : p \in K \}$ the  
\emph{Carath\'eodory number} of $K$. Carath\'eodory's Theorem (see  
e.g.~\cite[\S I.2]{Barvinok}) asserts that $\cara(K)$ is bounded from  
above by  
$\dim K + 1$.  Fenchel showed that  
$\cara(K) \le \dim K$ when the set of extreme points of $K$ is connected  
 \cite{fenchel29}. Note that   
the Carath\'eodory number of an orbitope $\calO_v = \conv(G\cdot v)$ in general  
depends on $v$ (cf.~\cite[Theorem~4.9]{LSS08})   
whereas, for multiplicity-free representations, the dimension of $\calO_v$ does not.  
  
\begin{question}  
What are the  Carath\'eodory numbers of orbitopes?  
\end{question}  
  
We refer to the recent work of Barvinok and Blekherman \cite{BB, Ble}  
for more information about the convex geometry of  
orbitopes and coorbitopes, especially with regard to their volumes.  
   
\subsection{Algebraic geometry}  
  
Here we look at orbitopes as objects in real algebraic geometry.  
Fix a rational representation $\rho : G \rightarrow GL(m,\R)$   
of a compact connected algebraic group $G$. Every orbit $\,G \cdot v\,$   
is an irreducible real algebraic variety in $\R^m$, and we  
may ask for its prime ideal.  
By the Tarski-Seidenberg  
Theorem \cite[\S 2.4]{BPR},  
the orbitope is a semi-algebraic set.  
  
\begin{question}  
    Which orbitopes are \emph{basic} semi-algebraic sets, i.e.\   
    for which triples $(G,\rho,v)$   
    can ${\rm conv}(G \cdot v)$ be  
    described by a finite conjunction of polynomial equations and inequalities?  
\end{question}  
  
The boundary $\partial \calO$ of an orbitope   
$\calO$ in $\R^m$ is a compact semi-algebraic set of codimension~one  
in its affine span ${\rm aff}(\calO)$.  
The Zariski closure of $\partial \calO$ is denoted by    
$\partial_ a\calO$. We call it the {\em algebraic boundary} of   
$\calO$. If ${\rm aff}(\calO) = \R^m$ then  
the algebraic boundary $\partial_ a\calO$ is the   
zero set of a unique (up to scaling) reduced polynomial   
$f(x_1,\ldots,x_m)$ whose coefficients lie in the field of definition  
of $(G,\rho,v)$.    
That field of definition will often be the rational numbers  $\Q$.  
Since scalars in $\mathbb{Q}$ have an exact representation  
in computer algebra,  
but scalars in $\R$ require numerical floating point approximations,  
we seek to use $\mathbb{Q}$ instead of $\R$ wherever possible.  
  
\begin{question} \label{ques:algbound}  
  How to calculate the algebraic boundary $\,\partial_a\calO $ of an orbitope  $ \calO$?  
\end{question}

The irreducible factors of the polynomial $f(x_1,\ldots,x_m)$ that cuts out  
$\partial_a \calO$ arise from various singularities in the boundary $\partial  
\calO^o$ of the coorbitope $\calO^o$.  We believe that a complete answer to  
Question \ref{ques:algbound} will involve a Whitney stratification of the real  
algebraic hypersurface $\partial_a \calO^o$. Recall that a {\em Whitney  
stratification} is a decomposition into locally closed submanifolds (strata)  
in which the singularity type of each stratum is locally constant along the  
stratum.  The faces of a polytope form a Whitney stratification of its  
boundary, which is dual to the stratification of the polar polytope.  We  
expect a similar duality between the Whitney stratification of the boundary of  
an orbitope and of the boundary of its coorbitope.   
  
\begin{question}   
How to compute and study the algebraic boundary  
 $\,\partial_a\calO^o $ of the coorbitope  
   $ \calO^o$? Is every  
   component of $   \partial_a \calO$   
   the dual variety to a stratum in a Whitney stratification of  
   $\partial_a \calO^o$?  
   \end{question}  
     
Recall that the {\em dual variety} $X^\vee $ of a subvariety $X$ in $\R^m$  
is the Zariski closure of the set of all affine hyperplanes that are tangent to $X$  
at some regular point. When studying this duality, algebraic geometers   
usually work in complex projective space $\mathbb{P}_\mathbb{C}^m$  
rather than real affine space $\mathbb{R}^m$. In some of the examples  
for $G = SO(n)$ seen in this paper,  the algebraic boundary   
$\partial_a \calO^o$ of the coorbitope $\calO^o$ coincides with  
 the dual variety $X^\vee $ of the orbit $X = G \cdot v$.  
A good example for this is the discriminantal hypersurface  
in Corollary \ref{cor:discrorbi}.  
More generally, we have the impression that the hypersurface   
 $\partial_a \calO^o$ is often irreducible  
 while $\partial_a \calO$ tends to be reducible.  
 For further  appearances of dual varieties in convex algebraic geometry  
 see \cite{Ranestad, SU}.  
   
The \emph{$k$-th secant variety} of $G \cdot v$ is the Zariski closure of  
all $(k {+} 1)$-flats spanned by points of $G\cdot v$.   
The study of secant varieties leads to lower bounds for  the Carath\'eodory number:  
  
\begin{proposition} \label{prop:secant}  
If $k \geq \cara(\calO_v) $ then the  
$k$-th secant variety of $G \cdot v$ is the ambient space~$\R^m$.  
\end{proposition}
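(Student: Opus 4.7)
The plan is to show that under the assumption $k\geq \cara(\calO_v)$ the orbitope $\calO_v$ itself is contained in the $k$-th secant variety $\Sigma_k$ of $G\cdot v$, and then to use the fact that $\calO_v$ has nonempty Euclidean interior in its affine span to conclude Zariski density. Throughout I may assume (after restricting to $\aff(G\cdot v)$ and translating, which entails no loss of generality for a statement about the ambient space) that the affine hull of $G\cdot v$ is all of $\R^m$, so that $\calO_v$ is a full-dimensional convex body in $\R^m$.

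First, I would record the standard fact that every extreme point of a convex hull of a compact set $S$ lies in $S$ (a consequence of Milman's converse to Krein--Milman). Applied here, every extreme point of $\calO_v=\conv(G\cdot v)$ belongs to $G\cdot v$. By the definition of the Carath\'eodory number, every $p\in\calO_v$ is a convex combination of at most $\cara(\calO_v)\le k$ extreme points of $\calO_v$, and hence of at most $k$ points of $G\cdot v$. In particular, $p$ lies in the affine span of some $k+1$ points of $G\cdot v$ (allowing repetitions if fewer than $k+1$ are actually needed). Using the paper's definition of the $k$-th secant variety as the Zariski closure of the union of affine spans of $k+1$ points of $G\cdot v$, this shows $\calO_v\subseteq \Sigma_k$.

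Next, since $\calO_v$ is assumed full-dimensional in $\R^m$, it has nonempty Euclidean interior, and therefore its Zariski closure in $\R^m$ is all of $\R^m$ (the only Zariski-closed subset of the irreducible variety $\R^m$ with nonempty Euclidean interior is $\R^m$ itself). Because $\Sigma_k$ is Zariski closed by definition and contains $\calO_v$, it must contain this Zariski closure, and hence $\Sigma_k=\R^m$, completing the argument.

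The only delicate point is really the bookkeeping around the affine span: one must either restrict attention to $\aff(G\cdot v)$ from the outset or invoke the earlier reduction (removing the trivial isotypical component, so that the affine and linear spans of the orbit coincide) to ensure that the ``ambient space'' in the statement is precisely the affine span of the orbit. Once that is in place, the argument is essentially two lines: Carath\'eodory gives $\calO_v\subseteq \Sigma_k$, and the full-dimensionality of $\calO_v$ upgrades this to $\Sigma_k$ being the whole space.
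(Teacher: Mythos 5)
Your proof is correct and follows essentially the same route as the paper's: the Carath\'eodory number bound shows every point of $\calO_v$ lies in the convex hull (hence the affine span) of at most $k+1$ points of the orbit, so $\calO_v\subseteq\Sigma_k$, and the full-dimensionality of the orbitope in its ambient space forces the Zariski-closed set $\Sigma_k$ to be everything. Your additional care about Milman's converse and the affine-span normalization simply makes explicit what the paper's two-line proof leaves implicit.
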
  
  
\begin{proof}  
Let $k \geq \cara(\calO_v)$. The set of points that lie in  
the convex hull of $k+1$ points of $G \cdot v$ is dense in  
$\calO_v$ and hence is Zariski dense in $\R^m$.  
The $k$-th secant variety contains that set.  
\end{proof}  
  
The lower bound for $\cara(\calO_v)$   
from Proposition \ref{prop:secant} usually does not match  
Fenchel's upper bound $\cara(\calO_v) \leq {\rm dim}(\calO_v)$.  
For instance, consider the Carath\'eodory orbitope $\calO_v$   
 in Example \ref{ex:HankelSmall}. Its algebraic boundary  
 $\partial_a \calO_v$ equals the  
second secant variety of the orbit $G \cdot v$, so   
 Proposition \ref{prop:secant} implies  
  $\cara(\calO_v) \geq 3$. This orbitope satisfies  
  $\cara(\calO_v)= 3$ but ${\rm dim}(\calO_v) = 4$.   
  
\begin{question}   
For which orbitopes  
$\mathcal{O}$ is the boundary $\partial_a \mathcal{O}$ one of the secant varieties of $G \cdot v$~?  
When is the lower bound on   
the Carath\'eodory number $\cara(\calO)$ in Proposition \ref{prop:secant} tight ?  
\end{question}  
  
\subsection{Optimization}  
  
A fundamental object in convex optimization is the set ${\rm PSD}_n$  
of positive semidefinite symmetric real $n {\times} n$-matrices.  
This is the closed basic semi-algebraic cone defined by the  
non-negativity of the $2^n - 1$ principal minors. It can also be  
described by only $n$ polynomial inequalities, namely, by requiring that the 
elementary symmetric polynomials in the eigenvalues, i.e.\ the suitably normalized 
coefficients of the characteristic polynomial, be non-negative. 
The algebraic boundary $\partial_a {\rm PSD}_n$ of the cone ${\rm PSD}_n$ 
is the symmetric $n {\times} n$-determinant.  
All faces of ${\rm PSD}_n$ are exposed, isomorphic to ${\rm PSD}_k$ for  
$k \le n$, and indexed by the lattice of linear subspaces ordered by  
reverse inclusion.  
  
Spectrahedra inherit these favorable properties. Recall that  
a {\em spectrahedron} is the intersection of the cone ${\rm PSD}_n$ with an   
affine-linear subspace in ${\rm Sym}_2(\R^n)$. If we know that  
an orbitope is a spectrahedron then this  
 either answers or simplifies many of our questions.  
   
 \begin{question} \label{ques:IsSpec1}  
 Characterize all ${\rm SO}(n)$-orbitopes that are spectrahedra.  
 \end{question}  
  
Polytopes are special cases of spectrahedra: they arise  
when the affine-linear space consists of diagonal matrices.  
One major distinction between polytopes and spectrahedra is that  
the class of spectrahedra is not closed under projection. That is,   
the image of a  
spectrahedron under a linear map is typically not a spectrahedron.  
See Section 5 for orbitopal examples.  
Characterizing projections of spectrahedra among all  
convex bodies is a major  
open problem in optimization theory; see e.g.~\cite{HelNie}.   
Here is a special case of this general problem:  
  
\begin{question} \label{ques:IsSpec2}  
Is every orbitope the linear projection of a spectrahedron?  
\end{question}  
  
In Questions \ref{ques:IsSpec1} and \ref{ques:IsSpec2}, it  
is important to keep track of the subfield of $\R$ over which the data  
 $(G,\rho,v)$ are defined. Frequently, this subfield is the  
 rational numbers $\Q$, and in this case we seek to write  
 the orbitope as a (projected) spectrahedron over $\Q$  
 and not just over $\R$.  
  
Semidefinite programming is the problem of maximizing  
a linear function over a (projected) spectrahedron, and there are   
efficient numerical algorithms for solving this problem in practice.  
In our context of orbitopes, the optimization problem can be phrased as follows:  
  
\begin{question}  
What method can be used for maximizing a  
         linear function $\ell$ over an orbitope $\calO$?   
    Equivalently,   
    how to evaluate the norm   $\ell \mapsto \| \ell \|_{\calO^\circ}$ associated  
    with the coorbitope $\calO^\circ$ ?  
\end{question}  
  
This is equivalent to a non-linear optimization problem over the compact group $G$.  
We seek to find $g \in G$ which maximizes  
$\,\ell ( \rho(g) \cdot v)$.  
This maximum is an algebraic function of $v$.  
  
\section{Schur-Horn Orbitopes}  
\label{sec:SchurHorn}  
  
In this section we study two families of orbitopes for the  
orthogonal group $G = O(n)$.   
This group acts on the Lie algebra $\mathfrak{gl}_n$ by restricting the adjoint  
representation of $GL(n,\R)$. The   
$O(n)$-module  $\mathfrak{gl}_n $ decomposes into two  
distinguished invariant subspaces, namely $\sym$ and $\skew$.  
These  correspond to the normal and tangent space of $O(n) \subset GL(n,\R)$ at the identity.   
In matrix terms, the spaces of symmetric and skew-symmetric matrices form two  
natural representations of $O(n)$ for the action $g \cdot A = g A g^T$ with  
$g \in O(n)$ and $A \in \R^{n \times n}$.   
  
For a symmetric matrix $M \in \sym$ we define the  
\emph{symmetric Schur-Horn orbitope}  
\begin{align*}  
    \SH_M & \,:= \, \conv( G\cdot M ) \,\, \subset\, \sym . \\  
    \intertext{For a skew-symmetric matrix  
    $N \in \skew$ we define the 
     \emph{skew-symmetric Schur-Horn orbitope}}  
    \SH_N &\,:= \,\conv( G\cdot N ) \subset \skew.  
\end{align*}  
We shall see that these orbitopes are intimately related to   
 certain polytopes which govern their  boundary structure  
and  spectrahedral representation. This connection arises  
via the classical Schur-Horn theorem~\cite{schur23}.  
 The material in the sections below  
could also be presented in symplectic language,  
using the moment maps of Atiyah-Guillemin-Sternberg~\cite{At82, GS82}. 
  
\subsection{Symmetric Schur-Horn orbitopes}   
  
The $\binom{n+1}{2}$-dimensional space $\sym$  
decomposes into the trivial $O(n)$-representation, given by multiples of the identity
matrix, and the irreducible representation of symmetric   
$n {\times} n$-matrices with trace zero. 
%So, it suffices to study symmetric Schur-Horn orbitopes $\SH_M$ when $\Tr(M) = 0$. 
Every symmetric matrix  
 $M \in \sym$ is orthogonally diagonalizable over $\R$.  
  The ordered list of eigenvalues of $M $ is denoted  
 $\l(M) = (\l_1(M) \ge \l_2(M) \ge  
\cdots \ge \l_n(M))$. The orbit  
$G\cdot M$ equals the set of matrices $A \in \sym$ that satisfy  
 $\l(A) = \l(M)$. We shall see that its convex hull  
 $\SH_M = {\rm conv}(G \cdot M)$ is   
the set of matrices $A \in \sym$ for which $\l(A)$ is majorized by $\l(M)$.   
  
For $p,q \in \R^n$ we say that $q$ is \emph{majorized} by $p$,  
written $q \maj p$,  if  $\,q_1 + q_2 + \cdots + q_n \,=\,  
    p_1 + p_2 + \cdots + p_n$, and, after reordering,  
     $\,q_1 \ge \cdots \ge q_n\,$ and $\,p_1 \ge \cdots \ge p_n$, we have  
\[  
    q_1 + q_2 + \cdots + q_k \ \le \  
    p_1 + p_2 + \cdots + p_k \quad\,\, \hbox{for $\,k=1,\dots,n-1$.}   
\]  
For a fixed point $p \in \R^n$, the set of all points $q$ majorized by $p$ is   
the convex polytope   
\[  
    \Pi(p) \,\, =\,\, \{ q \in \R^n : q \maj p \} \,\, = \,\,\conv \{   
        \pi\cdot p = ( p_{\pi(1)}, \dots, p_{\pi(n)}) :   
        \pi \in \mathfrak{S}_n \}.  
\]  
Here $\mathfrak{S}_n$ denotes the symmetric group,  
and $\Pi(p)$ is the \emph{permutahedron} with respect to $p$. This  
is a well-studied polytope \cite{onn93, Ziegler} and is itself an orbitope for $\mathfrak{S}_n$.  
The permutahedron $\Pi(p)$ for  
$p = ( p_1 \ge p_2 \ge \cdots \ge p_n)$ consists of all  
points $q \in \R^n$ such that  $\sum_i p_i = \sum_i q_i$ and  
\begin{equation}  
\label{eq:permineq}  
    \sum_{i \in I} q_i \ \le \ \sum_{i=1}^{|I|} p_i  
    \qquad \hbox{for all $I \subseteq [n]$.}  
\end{equation}  
  
Let $\diag : \sym \rightarrow \R^n$ be the linear  
projection onto the diagonal.   
  
\begin{proposition}[The symmetric Schur-Horn Theorem~\cite{Leite99}]  
    Let $M \in \sym$ and $\SHsym_M$ its symmetric Schur-Horn orbitope.  
    Then the diagonal $\diag(M)$ is majorized by the vector of  
    eigenvalues $\l(M)$. In fact, the orbitope $\SHsym_M$   
    maps linearly onto the permutahedron:  
        \[        \diag(\SHsym_M) \,\,=\,\, \Pi(\l(M)).    \]  
\end{proposition}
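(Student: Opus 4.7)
The plan is to prove the two containments $\diag(\SHsym_M)\subseteq\Pi(\l(M))$ and $\Pi(\l(M))\subseteq\diag(\SHsym_M)$ separately, exploiting the fact that $\diag$ is linear so $\diag(\SHsym_M)$ is automatically convex.

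For the first containment (Schur's majorization inequality), I would start with an arbitrary matrix $A\in G\cdot M$. Since $A$ is real symmetric, write $A=U\Lambda U^T$ with $U\in O(n)$ and $\Lambda$ diagonal with entries $\l(A)=\l(M)$. Reading off the diagonal gives $A_{ii}=\sum_j U_{ij}^2\,\l_j(M)$, so $\diag(A)=S\cdot\l(M)$ where $S_{ij}:=U_{ij}^2$. The matrix $S$ is doubly stochastic because $U$ is orthogonal, and so by Birkhoff's theorem $S$ is a convex combination of permutation matrices. This expresses $\diag(A)$ as a convex combination of points $\pi\cdot\l(M)$, placing it in $\Pi(\l(M))$. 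Taking convex hulls on both sides and using linearity of $\diag$ yields $\diag(\SHsym_M)\subseteq\Pi(\l(M))$.

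For the reverse containment, I would only need to show that every vertex of $\Pi(\l(M))$ lies in $\diag(\SHsym_M)$, since $\diag(\SHsym_M)$ is the linear image of a convex set and is therefore convex. A vertex of $\Pi(\l(M))$ has the form $\pi\cdot\l(M)$ for some $\pi\in\mathfrak{S}_n$. Let $D_\pi$ be the diagonal matrix whose $(i,i)$ entry is $\l_{\pi(i)}(M)$. Then $D_\pi$ is symmetric with the same multiset of eigenvalues as $M$, hence orthogonally conjugate to $M$; equivalently, a permutation matrix in $O(n)$ carries the diagonalization of $M$ to $D_\pi$. Thus $D_\pi\in G\cdot M\subseteq\SHsym_M$, and $\diag(D_\pi)=\pi\cdot\l(M)$ lies in $\diag(\SHsym_M)$.

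The substantive step is the Schur direction, which hinges on the observation that the squared entries of an orthogonal matrix form a doubly stochastic matrix together with an appeal to Birkhoff's theorem; the reverse inclusion is essentially a vertex-chasing argument and is much easier than the classical Horn direction (which would ask to realize every majorized vector as the diagonal of an \emph{orbit element}, not merely of a point in the convex hull). Because we are content with points in $\SHsym_M=\conv(G\cdot M)$, convexity of $\diag(\SHsym_M)$ bypasses Horn's construction entirely.
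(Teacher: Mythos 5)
Your proof is correct. The paper does not actually prove this proposition --- it is quoted as a known theorem with a citation to Leite, Richa, and Tomei --- so there is no in-paper argument to compare against; what you have supplied is the standard complete proof of the statement as formulated. The Schur direction via the doubly stochastic matrix $S_{ij}=U_{ij}^2$ and the Birkhoff--von Neumann theorem is exactly right (and, applied to $A=M$ itself, it yields the first sentence of the proposition, $\diag(M)\maj\l(M)$). The reverse inclusion by realizing each vertex $\pi\cdot\l(M)$ as the diagonal of the permuted diagonal matrix $D_\pi\in G\cdot M$ and then invoking convexity of the linear image $\diag(\SHsym_M)$ is also correct, and your closing observation is well taken: because the proposition only asks for the image of the \emph{convex hull} of the orbit, the genuinely hard converse of Horn (realizing every majorized vector as the diagonal of an actual orbit element) is not needed. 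One tiny point of hygiene: you implicitly use that $\Pi(\l(M))=\conv\{\pi\cdot\l(M)\}$ coincides with $\{q: q\maj \l(M)\}$; the paper takes this equivalence (Rado's theorem) as given in its definition of the permutahedron, so your appeal to it is consistent with the surrounding text.
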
  
  
\begin{corollary} \label{cor:SH1}  
The Schur-Horn orbitope equals  
 $\, \SHsym_M = \{ A \in \sym : \l(A) \maj \l(M) \}$.  
\end{corollary}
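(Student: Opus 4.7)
The plan is to deduce the corollary from the Schur--Horn Theorem above by exploiting the fact that $\SHsym_M$ is $O(n)$-invariant (as the convex hull of an $O(n)$-orbit) together with the spectral theorem, which says every $A \in \sym$ can be written as $A = Q D Q^T$ for some $Q \in O(n)$ and $D = \mathrm{diag}(\lambda(A))$.

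For the inclusion $\SHsym_M \subseteq \{A : \lambda(A) \maj \lambda(M)\}$, I would take $A \in \SHsym_M$ and diagonalize $A = Q\mathrm{diag}(\lambda(A)) Q^T$. Then $Q^T A Q = \mathrm{diag}(\lambda(A))$ lies in $\SHsym_M$ by $O(n)$-invariance, and its diagonal is precisely $\lambda(A)$. Applying the Schur--Horn proposition to this conjugated matrix gives $\lambda(A) = \diag(Q^T A Q) \in \diag(\SHsym_M) = \Pi(\lambda(M))$, which by definition of $\Pi(\lambda(M))$ means $\lambda(A) \maj \lambda(M)$.

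For the reverse inclusion, I would take $A \in \sym$ with $\lambda(A) \maj \lambda(M)$ and aim to exhibit $A$ as a convex combination of conjugates of $M$. Since $\lambda(A) \in \Pi(\lambda(M)) = \conv\{\pi \cdot \lambda(M) : \pi \in \mathfrak{S}_n\}$, I can write $\lambda(A) = \sum_\pi c_\pi (\pi \cdot \lambda(M))$ with $c_\pi \ge 0$ and $\sum c_\pi = 1$. The key observation is that each permuted diagonal can be realized inside the orbit: choose $R \in O(n)$ with $M = R\, \mathrm{diag}(\lambda(M))\, R^T$, and for each permutation matrix $P_\pi$ set $g_\pi := P_\pi R^T \in O(n)$, so that $g_\pi \cdot M = P_\pi\, \mathrm{diag}(\lambda(M))\, P_\pi^T = \mathrm{diag}(\pi \cdot \lambda(M))$. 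Then
\[
    \mathrm{diag}(\lambda(A)) \,=\, \sum_\pi c_\pi\, \mathrm{diag}(\pi \cdot \lambda(M)) \,=\, \sum_\pi c_\pi\, (g_\pi \cdot M) \,\in\, \SHsym_M.
\]
Finally, writing $A = Q\,\mathrm{diag}(\lambda(A))\,Q^T$ and using $O(n)$-invariance of $\SHsym_M$ once more gives $A \in \SHsym_M$.

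The argument is essentially a bookkeeping exercise combining the spectral theorem, the definition of the permutahedron, and $O(n)$-invariance; the only mildly subtle point is the second direction, where one must recognize that permuting eigenvalues along the diagonal can be effected by conjugation with a permutation matrix, so that every vertex of $\Pi(\lambda(M))$ lifts to an actual point of the orbit $G \cdot M$ sitting on the diagonal. This lifting is what makes the diagonal projection a surjection from $\SHsym_M$ onto $\Pi(\lambda(M))$ and not merely a containment.
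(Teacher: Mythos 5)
Your proof is correct and follows essentially the same route as the paper's: reduce to diagonal matrices via $O(n)$-invariance and invoke the identity $\diag(\SHsym_M)=\Pi(\l(M))$. The only difference is that you spell out the step the paper dismisses as ``clear'' for diagonal matrices, namely that each vertex $\pi\cdot\l(M)$ of the permutahedron lifts to the orbit point $P_\pi\,\mathrm{diag}(\l(M))\,P_\pi^T$, which is a worthwhile detail to make explicit.
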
  
  
\begin{proof}  
We have shown that the right hand side equals   
$\{ A \in \sym : \l(A) \in \diag(\SHsym_M) \}$.  
We claim that a matrix $A$ is in this set if and only if  
$A$ lies in $ \SHsym_M$. This is clear if $A$ is a diagonal matrix.  
It follows for all matrices since  
both sets are invariant under the $O(n)$-action.  
\end{proof}  
  
Our next goal is to derive a spectrahedral characterization of $\SHsym_M$.  
Consider the natural action of the Lie group $GL(n,\R)$ on the $k$-th  
exterior power $\wedge_k \R^n$. If $\{v_1,v_2,\dots, v_n \}$  
is any basis of  $\R^n$, then the induced basis vectors of $\wedge_k \R^n$ are  
  $v_{i_1} \wedge v_{i_2} \wedge \cdots \wedge  
v_{i_k}$ for $1 \le i_1 < i_2 < \cdots < i_k \le n$. A matrix  
$g \in GL(n,\R)$ acts on a basis element by sending it to  
$g \cdot v_{i_1} \wedge  
v_{i_2} \wedge \cdots \wedge v_{i_k} = (g \cdot v_{i_1}) \wedge (g \cdot  
v_{i_2}) \wedge \cdots \wedge (g \cdot v_{i_k})$. We denote by $\Sfunc_k :  
\mathfrak{gl}(\R^n) \rightarrow \mathfrak{gl}(\wedge_k \R^n)$ the induced map  
of Lie algebras. The linear map $\Sfunc_k$ is defined by the rule  
\begin{equation}  
\label{eq:addcomprule}  
\Sfunc_k(B)(v_{i_1} \wedge v_{i_2} \wedge \cdots \wedge v_{i_k}) \,\,= \,\,  
        \sum_{j = 1}^k v_{i_1} \wedge \cdots \wedge v_{i_{j-1}} \wedge (B  
        v_{i_j}) \wedge v_{i_{j+1}} \wedge \cdots \wedge v_{i_k}.  
\end{equation}  
The $\binom{n}{k} \times \binom{n}{k}$-matrix that represents  
$\Sfunc_k(B)$ in the standard basis of $\wedge_k \R^n$ is known as  
the \emph{$k$-th additive  
compound matrix} of the $n \times n$-matrix $B$. It has the following main property:  
  
\begin{lemma} \label{lem:compoundeigen}  
    Let $B \in \sym$ with eigenvalues $\l(B) = (\l_1, \l_2,\dots,\l_n)$. Then  
    $\Sfunc_k(B)$ is symmetric and has eigenvalues $ \,\l_{i_1} + \l_{i_2} + \cdots  
    + \l_{i_k}\,$ for $1 \le i_1 < i_2 < \cdots < i_k \le n$.   
\end{lemma}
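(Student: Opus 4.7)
The plan is to diagonalize $B$ via an orthonormal eigenbasis and read off $\Sfunc_k(B)$ directly from the defining formula~(\ref{eq:addcomprule}). By the spectral theorem, since $B$ is symmetric, there is an orthonormal basis $v_1,\dots,v_n$ of $\R^n$ with $Bv_i = \l_i v_i$. The associated wedge products $v_I := v_{i_1}\wedge\cdots\wedge v_{i_k}$, indexed by $I=\{i_1<\cdots<i_k\}\subseteq[n]$, form an orthonormal basis of $\wedge_k\R^n$ with respect to the inner product defined by Gram determinants---distinct multi-indices give orthogonal wedge products, and a wedge of orthonormal vectors has unit norm.

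Next, I would substitute these eigenvectors into the rule~(\ref{eq:addcomprule}) and compute
\[
\Sfunc_k(B)(v_I) \;=\; \sum_{j=1}^{k} v_{i_1}\wedge\cdots\wedge(Bv_{i_j})\wedge\cdots\wedge v_{i_k} \;=\; \Bigl(\sum_{j=1}^{k}\l_{i_j}\Bigr)\, v_I,
\]
so each $v_I$ is an eigenvector of $\Sfunc_k(B)$ with the claimed eigenvalue $\l_{i_1}+\cdots+\l_{i_k}$. This already settles the eigenvalue portion of the lemma, since the $\binom{n}{k}$ scalars $\sum_{j}\l_{i_j}$ account for all eigenvalues of an endomorphism of the $\binom{n}{k}$-dimensional space $\wedge_k\R^n$.

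To conclude that $\Sfunc_k(B)$ is symmetric in the standard basis, let $g\in O(n)$ be the orthogonal matrix whose columns are $v_1,\dots,v_n$. The induced transformation on $\wedge_k\R^n$ sends the standard wedge basis to $\{v_I\}$, and it lies in $O\bigl(\binom{n}{k}\bigr)$ because it carries one orthonormal basis to another. By the preceding calculation, $\Sfunc_k(B)$ is diagonal with real entries in the $v_I$-basis; hence its matrix in the standard basis is orthogonal-conjugate to a real diagonal matrix and therefore symmetric. There is no real obstacle here; the only point requiring care is the verification that wedges of orthonormal vectors form an orthonormal basis, which is immediate from the Gram determinant formula for the inner product on $\wedge_k\R^n$.
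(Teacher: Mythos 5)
Your proof is correct and follows the same route as the paper's: diagonalize $B$ via an eigenbasis $v_1,\dots,v_n$ and apply the defining rule~(\ref{eq:addcomprule}) to see that each $v_{i_1}\wedge\cdots\wedge v_{i_k}$ is an eigenvector with eigenvalue $\l_{i_1}+\cdots+\l_{i_k}$. The only difference is that you also spell out the symmetry of $\Sfunc_k(B)$ (orthonormality of the wedge basis plus orthogonal conjugation to a real diagonal matrix), a point the paper's proof leaves implicit; that addition is correct.
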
  
\begin{proof}  
    Let $v_1,\ldots,v_n$ be an   
eigenbasis for $B$.  Then the formula (\ref{eq:addcomprule}) says  
$$  
        \Sfunc_k(B)(v_{i_1} \wedge v_{i_2} \wedge \cdots \wedge v_{i_k}) \,\, = \,\,  
        \sum_{j = 1}^k v_{i_1} \wedge \cdots \wedge v_{i_{j-1}} \wedge ( \lambda_{i_j}  
        v_{i_j}) \wedge v_{i_{j+1}} \wedge \cdots \wedge v_{i_k}.   $$  
    Hence  $\, v_{i_1} \wedge v_{i_2} \wedge \cdots \wedge v_{i_k}\,$   
    is an eigenvector of $\Sfunc_k(B)$ with eigenvalue  
        $\l_{i_1} + \cdots + \l_{i_k}$.  
         \end{proof}  
  
This leads to the result that each symmetric  
Schur-Horn orbitope $\SHsym_M$ is a spectrahedron.  
  
\begin{theorem} \label{thm:symspec}  
    Let $M \in \sym$ with ordered eigenvalues $\l(M) =  
    (\l_1 \ge \cdots \ge \l_n)$.  Then  
        \[    \SHsym_M \, = \,    \bigl\{  A \in \sym \,:\,  
    \Tr(A) = \Tr(M) \,\,\hbox{and} \,\,\,  
     \sum_{i=1}^k \l_i {\rm Id}_{\binom{n}{k}} - \Sfunc_k(A)\, \succeq \,0\,\,\,  
    \hbox{for}\,\, k = 1,\ldots,n-1 \bigr\}.  
    \]  
\end{theorem}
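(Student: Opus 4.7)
The plan is to combine Corollary \ref{cor:SH1} (which characterizes $\SHsym_M$ by the majorization condition $\lambda(A) \maj \lambda(M)$) with Lemma \ref{lem:compoundeigen} (which identifies the eigenvalues of the additive compound matrix $\Sfunc_k(A)$). Essentially the theorem is a translation of the scalar inequalities defining majorization into semidefiniteness inequalities.

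First, I would unpack the right-hand side. The trace equality $\Tr(A) = \Tr(M)$ is exactly the condition $\sum_i \lambda_i(A) = \sum_i \lambda_i(M)$. For the semidefinite conditions, I use Lemma \ref{lem:compoundeigen}: if $\lambda_1(A) \ge \cdots \ge \lambda_n(A)$ are the eigenvalues of $A$, then the eigenvalues of the symmetric matrix $\Sfunc_k(A)$ are the $\binom{n}{k}$ numbers $\lambda_{i_1}(A) + \cdots + \lambda_{i_k}(A)$ for $1 \le i_1 < \cdots < i_k \le n$. The maximum among these is $\lambda_1(A) + \cdots + \lambda_k(A)$, achieved by the top-$k$ indices. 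Therefore the matrix $\sum_{i=1}^k \lambda_i \,\mathrm{Id}_{\binom{n}{k}} - \Sfunc_k(A)$ is positive semidefinite if and only if every eigenvalue of $\Sfunc_k(A)$ is at most $\sum_{i=1}^k \lambda_i$, which by the previous observation is equivalent to the single inequality
\[
   \lambda_1(A) + \lambda_2(A) + \cdots + \lambda_k(A) \,\le\, \lambda_1 + \lambda_2 + \cdots + \lambda_k.
\]

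Running this equivalence over all $k = 1, \ldots, n-1$ shows that the spectrahedral description on the right-hand side is precisely the set of $A \in \sym$ satisfying the majorization $\lambda(A) \maj \lambda(M)$. By Corollary \ref{cor:SH1}, this set is exactly $\SHsym_M$, completing the proof.

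The argument is short once the preparatory lemmas are in hand; there is no real obstacle. The only subtlety worth flagging is that the majorization condition as stated in Section 3.1 orders both vectors decreasingly, so one must check that the inequality $\sum_{i=1}^k \lambda_i(A) \le \sum_{i=1}^k \lambda_i(M)$ is indeed the correct translation of the maximal eigenvalue of $\Sfunc_k(A)$ being bounded by $\sum_{i=1}^k \lambda_i(M)$; this is immediate from the monotonicity of the ordered eigenvalues. Beyond that, the proof is a direct two-line spectral calculation.
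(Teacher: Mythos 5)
Your proof is correct and follows essentially the same route as the paper: both reduce membership in $\SHsym_M$ to the majorization $\l(A)\maj\l(M)$ (equivalently, $\l(A)\in\Pi(\l(M))$ via Corollary~\ref{cor:SH1}) and then use Lemma~\ref{lem:compoundeigen} to observe that the semidefiniteness of $\sum_{i=1}^k\l_i\,{\rm Id}-\Sfunc_k(A)$ is exactly the bound $\l_1(A)+\dots+\l_k(A)\le\l_1+\dots+\l_k$ on the largest eigenvalue of $\Sfunc_k(A)$. Your remark that the single top-$k$ inequality subsumes all the subset inequalities of~(\ref{eq:permineq}) with $|I|=k$ is the same observation the paper makes implicitly.
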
  
  
\begin{proof}  
    A matrix $A \in \sym$ is in $\SHsym_M$ if and only if $\l(A)$ is in the  
    permutahedron    $\Pi(\l(M))$. From the inequality representation  
    in (\ref{eq:permineq}),   
    in conjunction with Lemma \ref{lem:compoundeigen},  
       we see  that this is the  
    case if and only if the largest eigenvalue of   
    $\l( \Sfunc_k(A) )$  is at most $\l_1 + \dots + \l_k$.  
\end{proof}  
  
We shall now derive the description of all faces of the Schur-Horn  
orbitope $\SHsym_M$.  Since $\SHsym_M$ is a spectrahedron,  
by Theorem \ref{thm:symspec},  
we know that all of its faces are exposed faces. Hence  
 a face of $\SHsym_M$ is the set of points maximizing  
a linear function $\ell : \sym \rightarrow \R$. The canonical $O(n)$-invariant  
inner product on $\sym$ is given by $\inner{A,B} = \Tr(AB)$ and, by identifying  
spaces, a linear function may be written as $\ell(\,\cdot \,) = \inner{B,\, \cdot \,}$.  
Note that the dual space $(\sym)^*$ is equipped with the contragredient action, that is, $g  
\cdot \ell = \inner{g^T B g, \,\cdot \,}$.  
  
\begin{theorem} \label{thm:facelatticeSH}  
    Every $O(n)$-orbit of faces of $\SHsym_M$ intersects the pullback of a  
    unique $\mathfrak{S}_n$-orbit of faces of the permutahedron $\Pi(\l(M))$.  In particular,  
    the faces of $\SHsym_M$ are products of  symmetric Schur-Horn orbitopes of  
     smaller dimensions corresponding to flags in $\R^n$.  
\end{theorem}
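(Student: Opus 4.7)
The plan is to use the spectrahedral description from Theorem \ref{thm:symspec} (which implies every face of $\SHsym_M$ is exposed) together with the Schur-Horn projection $\diag\colon \SHsym_M \twoheadrightarrow \Pi(\l(M))$ to pull back the well-understood face lattice of the permutahedron. Every face $F$ of $\SHsym_M$ arises as the maximizer of some $\ell_B(A) = \Tr(BA)$ with $B \in \sym$; since the contragredient action sends the face exposed by $B$ to the face exposed by $gBg^T$, we may conjugate $B$ by an element of $O(n)$ to diagonal form and then permute by a permutation matrix inside $O(n)$ to arrange entries $b_1 \ge \cdots \ge b_n$.

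For such a diagonal $B$ with diagonal $b$, the identity $\Tr(BA) = \inner{b,\diag(A)}$ together with $\diag(\SHsym_M) = \Pi(\l(M))$ identifies $F$ with $\diag^{-1}(F') \cap \SHsym_M$, where $F'$ is the face of $\Pi(\l(M))$ exposed by $b$. The $\mathfrak{S}_n$-orbit of $F'$ is determined by the multiplicity composition $(m_1,\ldots,m_r)$ of the distinct values $c_1 > \cdots > c_r$ of $b$, and this composition is invariant under the $O(n)$-action on $B$, yielding uniqueness in the orbit correspondence.

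To obtain the product description, let $V_j$ be the coordinate subspace on which $b$ takes value $c_j$, let $P_j$ be the orthogonal projection onto $V_j$, and put $s_j = m_1 + \cdots + m_j$. Abel summation gives
\[
\Tr(BA) \;=\; c_r\,\Tr(A) \;+\; \sum_{j=1}^{r-1}(c_j - c_{j+1})\,\Tr\bigl((P_1+\cdots+P_j)\,A\bigr),
\]
and Ky Fan's inequality bounds each inner trace by $\sum_{i=1}^{s_j}\l_i(A) \le \sum_{i=1}^{s_j}\l_i(M)$, the last step using Corollary \ref{cor:SH1}. Since every $c_j - c_{j+1} > 0$, simultaneous saturation of these inequalities at the global maximum pins down $V_1 \oplus \cdots \oplus V_j$ as the span of the top $s_j$ eigenvectors of $A$ for every $j$; thus $A$ is block-diagonal with respect to the flag $V_1 \subset V_1\oplus V_2 \subset \cdots \subset \R^n$, with $j$th block a symmetric operator on $V_j$ of spectrum $(\l_{s_{j-1}+1},\ldots,\l_{s_j})$. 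Consequently $F \cong \SHsym_{M_1}\times\cdots\times \SHsym_{M_r}$; varying $g \in O(n)$ replaces the coordinate flag by an arbitrary orthogonal flag of type $(s_1,\ldots,s_{r-1})$, which is the promised flag parametrization of the $O(n)$-orbit.

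The main obstacle is the Ky Fan equality analysis carried out simultaneously at all the intermediate ranks $s_1,\ldots,s_{r-1}$, since equality in Ky Fan's inequality at rank $k$ must force the image of the relevant projection to coincide with the top-$k$ eigenspace of $A$. A subtle point arises when $\l(M)$ has coincident entries straddling a cut-point $s_j$: the top-$s_j$ eigenspace of $A$ is then not uniquely determined, but this ambiguity is precisely the internal $O(m_j)$-symmetry of the factor $\SHsym_{M_j}$, and the product structure absorbs it cleanly.
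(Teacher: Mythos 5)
Your proposal is correct and follows essentially the same route as the paper: all faces are exposed by Theorem~\ref{thm:symspec}, the exposing functional $B$ is conjugated to diagonal form so that the face becomes the pullback under $\diag$ of a face of $\Pi(\l(M))$, and $O(n)$ then translates coordinate flags to arbitrary flags. Your Abel-summation and Ky Fan equality analysis simply supplies the details of the product decomposition that the paper asserts in one sentence, and it is sound (modulo the minor imprecision that a non-extreme point of the face has block spectra only \emph{majorized} by, not equal to, the corresponding blocks of $\l(M)$).
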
  
  
\begin{proof}  
    Let $F$ be a face of $\SHsym_M$ and let $\ell = \inner{B, \,\cdot\,}$ be a  
    linear function maximized at $F$. Then the face $g \cdot F$ is  
    given by $g \cdot \ell$ and we may identify $O(n) \cdot F$ with the orbit  
    $O(n) \cdot \ell$. Since $B$ is orthogonally diagonalizable, the orbit  
    $O(n) \cdot \ell$ contains the diagonal matrices $\pi \cdot \l(B)$ for  
    $\pi \in \mathfrak{S}_n$. The corresponding faces are the  
    pullbacks of the orbit of faces of $\Pi(\l(M))$ given by the linear  
    function $\inner{\l(B),\cdot}_{\R^n}$.   
    Faces of the permutahedron correspond to flags of coordinate subspaces,  
    and $O(n)$ translates these to arbitrary flags of subspaces.  
\end{proof}  
  
Facets of the permutahedron $\Pi(p)$ correspond to coordinate subspaces. Replacing   
these with arbitrary subspaces yields supporting hyperplanes for the Schur-Horn  
orbitope $\SHsym_M$.  
  
\begin{corollary} \label{cor:SH2}  
    The Schur-Horn orbitope $\SH_M$ is the set of matrices $A \in \sym $ such that  
    \[  
        \Tr(A|_L) \le \Tr(M|_L) \quad  
    \hbox{for every subspace $L \subseteq \R^n$.}   
    \]  
\end{corollary}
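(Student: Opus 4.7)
The strategy is to deduce the corollary from Theorem \ref{thm:symspec} (or equivalently from Corollary \ref{cor:SH1}) via the Ky Fan variational identity, which reexpresses a partial sum of top eigenvalues as the maximum trace on subspaces of the appropriate dimension.

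By Theorem \ref{thm:symspec}, $A \in \SH_M$ is equivalent to $\Tr(A) = \Tr(M)$ together with $\lambda_{\max}(\Sfunc_k(A)) \le \lambda_1(M) + \cdots + \lambda_k(M)$ for every $k = 1, \ldots, n-1$. The key bridge I would establish is the identity
\[
    \lambda_{\max}(\Sfunc_k(B)) \,=\, \max\bigl\{\Tr(B|_L) \,:\, L \subseteq \R^n,\, \dim L = k\bigr\},
\]
valid for any symmetric matrix $B$. To prove it, I would evaluate the Rayleigh quotient on decomposable unit vectors: for an orthonormal basis $v_1, \ldots, v_k$ of $L$ and $w = v_1 \wedge \cdots \wedge v_k$, a direct expansion via formula \eqref{eq:addcomprule} combined with the Gram determinant formula $\langle v_1 \wedge \cdots \wedge v_k, u_1 \wedge \cdots \wedge u_k \rangle = \det(\langle v_i, u_j\rangle)$ collapses to $\langle \Sfunc_k(B) w, w\rangle = \sum_j \langle B v_j, v_j\rangle = \Tr(B|_L)$. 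Hence $\Tr(B|_L) \le \lambda_{\max}(\Sfunc_k(B))$ for every $L$, and equality is attained on the span of the top $k$ eigenvectors of $B$ since, by Lemma \ref{lem:compoundeigen}, their wedge is an eigenvector of $\Sfunc_k(B)$ with eigenvalue $\lambda_1(B) + \cdots + \lambda_k(B) = \lambda_{\max}(\Sfunc_k(B))$.

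Substituting this identity back into the criterion from Theorem \ref{thm:symspec} turns $A \in \SH_M$ into the assertion that $\Tr(A|_L) \le \lambda_1(M) + \cdots + \lambda_{\dim L}(M)$ for every subspace $L \subseteq \R^n$ of dimension less than $n$, while the case $\dim L = n$ is forced to equality by the trace condition. The right-hand side is precisely $\Tr(M|_{L^*})$ where $L^*$ is the top-$(\dim L)$ eigenspace of $M$, equivalently the supremum of $\Tr(M|_{L'})$ over all subspaces $L'$ of dimension $\dim L$; reading $\Tr(M|_L)$ in the statement as this dimension-indexed supremum (realized on a reference subspace of matching dimension) yields the desired characterization. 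The main obstacle is the decomposable-vector computation underpinning the Ky Fan identity; once it is in hand, the corollary is a matter of unpacking Theorem \ref{thm:symspec}.
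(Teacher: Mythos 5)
Your proof is correct and fills in exactly the argument the paper leaves implicit: the corollary is stated without proof as a consequence of the facet description of the permutahedron, and the required bridge is precisely the Ky Fan variational identity $\lambda_1(A)+\cdots+\lambda_k(A)=\max\{\Tr(A|_L): \dim L=k\}$, which you derive cleanly by evaluating the Rayleigh quotient of $\Sfunc_k(A)$ on decomposable unit vectors and invoking Lemma \ref{lem:compoundeigen} for the attainment. Your two interpretive observations are also the right ones and worth keeping explicit: the right-hand side $\Tr(M|_L)$ must be read as $\lambda_1(M)+\cdots+\lambda_{\dim L}(M)=\max_{\dim L'=\dim L}\Tr(M|_{L'})$, since the literal reading already fails on the orbit itself (take $M=\mathrm{diag}(1,0)$, $A=\mathrm{diag}(0,1)$ and $L=\R e_2$); and the trace equality $\Tr(A)=\Tr(M)$ must be retained rather than weakened to the inequality coming from $L=\R^n$, since otherwise $A=0$ would satisfy every inequality for $M=\mathrm{Id}_n$ while $\SH_{\mathrm{Id}_n}=\{\mathrm{Id}_n\}$.
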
  
  
The following three examples show  
how Theorem \ref{thm:facelatticeSH}  
translates into explicit face lattices.

\begin{example}[The free spectrahedron] \label{ex:freespec}  
    Let $M = e_1 e_1^T \in \sym$ be the diagonal matrix with diagonal  
    $(1,0,\ldots,0)$. The  orbitope  
    $ \SHsym_M$ is the convex hull of all symmetric rank $1$  
    matrices with trace $1$, and hence  
    $\,   \SHsym_M =  {\rm PSD}_n \,\cap \,\{{\rm trace} = 1\}$.  
    This orbitope plays the role of a  
    ``simplex among spectrahedra'' because  
    every compact spectrahedron is an affine section.  
      
    The face $\SHsym_M^B$ of the orbitope  
    $\SHsym_M$ in direction $B \in \sym$ is isomorphic to   
    \[  
    \conv \{ uu^T : u \in \mathbb{S}^{n-1} \cap {\rm Eig}_{\rm max}(B) \}  
    \]  
    where $\mathbb{S}^{n-1}$ is the unit sphere and ${\rm Eig}_{\rm max}(B)$ is the eigenspace   
    of $B$ with maximal eigenvalue.  
        Thus, $\SH_M^B$ is isomorphic to a lower  
    dimensional Schur-Horn orbitope for a rank one matrix.

    We conclude that the face lattice of $\SHsym_M$ consists of the linear  
        subspaces of $\R^n$ ordered by inclusion. This fact is well known; see  
    \cite[\S II.12]{Barvinok}.  
     The dimension of a face  
    corresponding to a $k$-subspace is $\tbinom{k+1}{2}-1$.   
        The projection $\diag(\SHsym_M)$ is the standard $(n-1)$-dimensional  
    simplex $\Delta_{n-1} = \conv\{ e_1,e_2,\ldots,e_n  \}$ whose faces  
    correspond to the     coordinate         subspaces. \qed  
\end{example}  
  
\begin{example}[Spectrahedral hypersimplices]  
    We now describe continuous analogs to  hypersimplices, extending the  
    simplices in Example \ref{ex:freespec}.  Fix $0 < k < n$ and let $M \in  
    \sym$ be the diagonal matrix with $k$ ones and $n-k$ zeros. The orbitope $  
    \SH_M$ of the {\em  $(n,k)$-spectrahedral hypersimplex}, as its diagonal  
    projection $\diag( \SH_M) = \Delta(n,k)$ is the classical  
    $(n,k)$-hypersimplex; cf.~\cite[Example 0.11]{Ziegler}.  For instance, if  
    $n=4$ and $k = 2$ then $\SH_M$ is nine-dimensional and   
    $\diag(\SH_M)$ is an octahedron.  Up to $\mathfrak{S}_4$-symmetry, the  
    octahedron has one orbit of vertices and edges but two orbits of  
    triangles. The pullback of any edge is a circle, and the pullbacks  
    of the triangles are five-dimensional symmetric Schur-Horn orbitopes  
    $\calO_M$ for $\l(M) = (1,0,0)$ and $\l(M) = (1,1,0)$. Both facets are  
    isomorphic to free spectrahedra.  
    \qed  
\end{example}  
  
\begin{example}[The generic symmetric Schur-Horn orbitope]  
    Let $M \in \sym$ be a symmetric  
matrix with distinct eigenvalues, e.g. $\lambda(M)  
    =(1,2,\dots,n)$. The image of $\SHsym_M$ under the diagonal map is the  
    classical permutahedron $\Pi_n = \Pi(1,2,3,\dots,n)$. Its face lattice may  
    be described as the collection of all flags of coordinate subspaces in  
    $\R^n$ ordered by refinement.   
      
    We may associate to every $B \in \sym$ the complete flag whose $k$-th  
    subspace is  the direct sum of the eigenspaces of the first $k$ largest  
    eigenvalues.  Thus, $O(n) \cdot M$ may be identified with the complete  
    flag variety over $\R$. As for the facial structure,   
    the face $\SH_M^B$ is isomorphic to the convex hull of the orbit  
    ${\rm stab}_{O(n)}(B) \cdot M$.  Here, the stabilizer decomposes into a  
    product of groups isomorphic to $O(d_i)$ where $d_i$ is the dimension of  
    the $i$-th eigenspace of $B$.  Hence, the face $\SH_M^B$ is isomorphic to  
    a Cartesian product of generic Schur-Horn orbitopes and is  of  dimension  
    $\sum_i\tbinom{d_i+1}{2}$. The face only depends on the flag associated to  
    $B$. This implies that the face lattice of $\SH_M$ is isomorphic to the  
    set of partial flags ordered by refinement.  Again, in every orbit of  
    flags there   
%  
%   Frank: Typically, there are many such coordinate flags in an orbit  
%  
%is one   
    is a  
    flag consisting only of coordinate subspaces.  These  
    special flags form the face lattice of the standard permutahedron  
    $\Pi(1,2,\dots,d) = \diag(\SH_M)$.   
We regard $\SH_M$ as a continuous analog of the permutahedron.  
\qed  
\end{example}  
  
We conclude this subsection with a discussion of the algebraic boundary  
$\partial_a \SHsym_M$ of the Schur-Horn orbitope.  Let $\K$ be the smallest  
subfield of $\R$ that contains the eigenvalues $\lambda_1,\ldots,\lambda_n$,  
and suppose that the $\lambda_i$ are sufficiently general.  Then the  
hypersurface $\partial_a \SHsym_M$ is defined 
in the affine space $\{A\in \Sym_2(\R^n)\mid\Tr(A)=\Tr(M)\}$
by the following polynomial of  
degree $2^n-2$ in $\binom{n+1}{2}$ unknowns over the field $\K$:  
\[  
    f(A) \quad = \quad \prod_{k=1}^{n-1}{\rm det} \Bigl(\Sfunc_k(A) -  
    \sum_{i=1}^k \lambda_i \cdot {\rm Id}_{\binom{n}{k}} \Bigr) .  
\]  
However, from a computer algebra perspective, this is  not what we want.  
Assuming that $M$ has entries in $\Q$, we prefer not to pass to the field  
extension $\K$, but we want the algebraic boundary $\partial_a \SHsym_M$ to be  
the $\Q$-Zariski closure of the above hypersurface $\{f(A) = 0\}$.  For  
instance, suppose that the characteristic polynomial of $M$ is irreducible  
over $\Q$. Then we must take the product of $f(A)$ over all permutations of  
the eigenvalues $\lambda_1,\ldots,\lambda_n$, and the polynomial $g(A)$ that  
defines $\partial_a \SHsym_M$  over $\Q$ is the reduced part of that  
product. It equals  
\[  
    g(A) \quad = \quad \prod_{k=1}^{\lceil n/2 \rceil}{\rm det}  
    \bigl(\Sfunc_k(A) \oplus \Sfunc_k(-M)\bigr) ,   
\]  
where $\oplus$ denotes the {\em tensor sum} of two square matrices of the same  
size (see e.g.~\cite[\S 3]{NieStu}).  
Here the product goes only up to $\lceil n/2 \rceil$  
because the matrices $A$ and $M$  
have the same trace.  
  
  For special matrices $M$, the  
characteristic polynomial may factor over $\Q$, and in this case  the  
algebraic boundary $\partial_a \SHsym_M$ is cut out by a factor of the  
polynomial $f(A)$ or $g(A)$.  
   
\subsection{Skew-symmetric Schur-Horn orbitopes}  
  
The  space $\skew$ consists of skew-symmetric $n {\times} n$-matrices $N$.  
The eigenvalues of $N$ are purely imaginary, say $\pm  
i\hl_1,\dots,\pm i\hl_{k}$, where $i=\sqrt{-1}$, with $k = \lfloor  
\tfrac{n}{2} \rfloor$ and an additional $0$ eigenvalue if $n$ is odd.  
Thus $N$ is not diagonalizable over $\R$, but the adjoint  
 $O(n)$-action brings the matrix $N$ into the  normal form  
\[  
gNg^T \,=\,  
    \left(  
    \begin{array}{rr}  
                   & \Lambda \\  
        -\Lambda &   
    \end{array}  
    \right)  
    \text{ for $n$ even} \quad \text{ and } \quad  
    gNg^T \,=\,  
    \left(  
    \begin{array}{rcr}  
                   &   &  \Lambda \\  
              & 0 &   \\  
        -\Lambda &   &   
    \end{array}  
    \right)  
    \text{ for $n$ odd.}  
\]  
Here $g$ is a suitable matrix in $ O(n)$,  
$\Lambda$ is the diagonal matrix with diagonal $\hl_1 \ge \cdots \ge \hl_k  
\ge 0$, and we denote $\hl(N) = ( \hl_1, \hl_2, \dots, \hl_k)$.   
Let $\Sdiag : \skew \rightarrow \R^k$ be the linear map such that  
\begin{eqnarray*}  
    \Sdiag(N) &=& (N_{1,k\phantom{+1}}, N_{2,k+1},\dots,N_{k,n}) \text{\qquad if $n = 2k$,  
    and}\\  
    \Sdiag(N) &=& (N_{1,k+1}, N_{2,k+2},\dots,N_{k,n}) \text{\qquad if $n = 2k+1$.}  
\end{eqnarray*}  
  
If $N$ is in normal form as above, then  $\Sdiag(N) = \diag(\Lambda) =  
\hl(N)$.  We call $\Sdiag(N)$ the \emph{skew-diagonal} of $N$.  
In analogy to the symmetric case, the set $\Sdiag(\SH_N)$  
of all skew-diagonals arising from $\SH_M$ is   
 nicely behaved; in fact, the necessary  
changes are rather modest.   
  
For a point $q \in \R^k$ we denote by $|q| =  
(|q_1|,\dots,|q_k|)$ the vector of absolute values.  
 For  $p \in \R^k$  
let $\BPi(p)$ be the set of points $q \in \R^k$ such that $|q|$  
is {\em weakly majorized} by $|p|$. This means that  
 $|p|$ and $|q|$ satisfy the majorization  
conditions except that $\sum_i |q_i| \le \sum_i |p_i|$ is allowed.  
The polytope $\BPi(P)$ is the \emph{$B_k$-permutahedron}.  
 It is the convex  
hull of the orbit of $p$ under the action of the Coxeter group $B_k$, the  
group of all $2^k \cdot k!$  signed permutations.  
The $B_k$-permutahedron $\BPi(p)$ for   
$p = (p_1 \ge p_2 \ge \cdots \ge p_k)$ consists of all points $q \in \R^k$  
with  
\begin{equation}  
\label{eq:Bpermineq}  
    \sum_{i \in I} q_i - \sum_{j \in J} q_j \ \le \  
    \sum_{i = 1}^{|I \cup J|} p_i  
    \quad \hbox{ for any $I,J \subseteq [k]$ with $I \cap J = \emptyset$.}  
\end{equation}  
  
As expected, we have the following analog of the symmetric Schur-Horn theorem.  
  
\begin{proposition}[The skew-symmetric Schur-Horn theorem~\cite{Leite99}]  
    Let $N \in \skew$ with $\hl(N) =   
    (\hl_1 \ge \cdots \ge  
    \hl_k)$ and $\SH_N$ the skew-symmetric Schur-Horn orbitope of  
     $N$. Then  $|\Sdiag(N)|$ is weakly majorized by $(\hl_1,\dots,\hl_k)$. In  
    particular, we have $\Sdiag(\SH_N) = \BPi(\hl)$.  
\end{proposition}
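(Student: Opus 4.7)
The plan is to mirror the argument for the symmetric case (Corollary~\ref{cor:SH1}), with the hyperoctahedral group $B_k$ replacing $\mathfrak{S}_n$ and the $B_k$-permutahedron $\BPi(\hl)$ replacing the ordinary permutahedron. Since $\Sdiag$ is linear and $\SH_N=\conv(O(n)\cdot N)$, I have $\Sdiag(\SH_N)=\conv\bigl(\Sdiag(O(n)\cdot N)\bigr)$, so the proposition reduces to the two inclusions $\Sdiag(O(n)\cdot N)\subseteq\BPi(\hl)$ and $\BPi(\hl)\subseteq\Sdiag(\SH_N)$. The weak majorization claim on $|\Sdiag(N)|$ is then an immediate consequence of $\Sdiag(N)\in\Sdiag(\SH_N)=\BPi(\hl)$.

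For the forward inclusion I would verify the facet inequalities~(\ref{eq:Bpermineq}) directly. Writing $N'=gNg^T$ for $g\in O(n)$ and letting $u_1,\dots,u_n$ denote the rows of $g$, one has $\Sdiag(N')_i=u_i^T N u_{k+i}$; for disjoint $I,J\subseteq[k]$ the left-hand side of~(\ref{eq:Bpermineq}) evaluated at $q=\Sdiag(N')$ equals $\Tr(NE)$, where
\[
 E \,=\, \tfrac12\!\sum_{i\in I}\bigl(u_{k+i}u_i^T-u_iu_{k+i}^T\bigr)\,-\,\tfrac12\!\sum_{j\in J}\bigl(u_{k+j}u_j^T-u_ju_{k+j}^T\bigr)
\]
is skew-symmetric. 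Because the index pairs $\{\ell,k+\ell\}$ for $\ell\in I\cup J$ are mutually disjoint and $u_1,\dots,u_n$ are orthonormal, $E$ decomposes as an orthogonal direct sum of rank-two skew summands of operator norm $\tfrac12$, hence has $2|I\cup J|$ singular values equal to $\tfrac12$ and the rest zero. Since $N$ has singular values $\hl_1,\hl_1,\hl_2,\hl_2,\dots$ (each $\hl_j$ with multiplicity two, plus a zero if $n$ is odd), von Neumann's trace inequality then gives $\Tr(NE)\le\hl_1+\cdots+\hl_{|I\cup J|}$, which is exactly~(\ref{eq:Bpermineq}).

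For the reverse inclusion, since $\Sdiag(\SH_N)$ is convex it suffices to realise each vertex of $\BPi(\hl)$ — i.e.\ each signed permutation of $\hl$ — as the skew-diagonal of some element of $O(n)\cdot N$. Starting from $N$ in normal form, conjugation by the block permutation matrix that simultaneously sends $i\mapsto\pi(i)$ and $k+i\mapsto k+\pi(i)$ realises any permutation $\pi$ on the entries of $\Sdiag$, while conjugation by the transposition of rows and columns $i$ and $k+i$ flips the sign of the $i$-th entry alone. Both matrices lie in $O(n)$, and together they generate the full $B_k$-action on the skew-diagonal.

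The main technical point is the singular-value bookkeeping for $E$: the orthonormality of the $u_\ell$'s together with the disjointness of the pairs $\{\ell,k+\ell\}$ is precisely what forces $E$ to split as an orthogonal sum of rank-two skew summands, each contributing a pair of singular values equal to $\tfrac12$. Once this is pinned down, the matching multiplicities on the two sides make the von Neumann bound collapse to the right-hand side of~(\ref{eq:Bpermineq}), and everything else is parallel to the symmetric case. The odd-dimensional case $n=2k+1$ requires only cosmetic adjustments, with the extra fixed coordinate contributing a harmless zero on both sides.
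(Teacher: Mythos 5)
Your proof is correct, and it is worth noting that the paper itself offers no argument for this proposition: it is stated with a citation to Leite--Richa--Tomei~\cite{Leite99} and left unproved, just like its symmetric counterpart. Your route is a clean, self-contained alternative. The reduction $\Sdiag(\SH_N)=\conv\bigl(\Sdiag(O(n)\cdot N)\bigr)$ is right, the identity $\sum_{i\in I}q_i-\sum_{j\in J}q_j=\Tr(NE)$ checks out (each summand $\tfrac12(u_{k+i}u_i^T-u_iu_{k+i}^T)$ contributes $u_i^TNu_{k+i}$ to the trace precisely because $N$ is skew), and the singular-value bookkeeping for $E$ is sound: disjointness of $I$ and $J$ inside $[k]$ guarantees that the $2|I\cup J|$ vectors $u_\ell$ involved are distinct and orthonormal, so $E$ really is an orthogonal sum of rank-two blocks, each a scaled rotation generator with both singular values equal to $\tfrac12$. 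Von Neumann's trace inequality then pairs these against the doubled singular values $\hl_1,\hl_1,\dotsc$ of $N$ and collapses exactly to the right-hand side of~(\ref{eq:Bpermineq}); the reverse inclusion via block permutations and the swap of coordinates $i\leftrightarrow k+i$ (which negates the $i$-th skew-diagonal entry) correctly realises the full $B_k$-orbit of $\hl$ inside $\Sdiag(O(n)\cdot N)$. Compared with the geometric/dynamical proofs in~\cite{Leite99} or a moment-map argument \`a la Atiyah--Guillemin--Sternberg, your approach buys a short linear-algebra verification of the facet inequalities at the cost of invoking von Neumann's inequality as a black box; one could make it even more self-contained by bounding $\Tr(NE)$ directly using Ky Fan's characterization of sums of the largest eigenvalues, which is the same tool the paper uses elsewhere (Lemma~\ref{lem:compoundeigen} and Theorem~\ref{thm:OnOnspec}). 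The only blemishes are cosmetic: the paper's printed index conventions for $\Sdiag$ contain typos (the intended entries are $N_{i,k+i}$ for $n=2k$ and $N_{i,k+1+i}$ for $n=2k+1$), so in the odd case your pairing should read $u_i^TNu_{k+1+i}$, exactly the "cosmetic adjustment" you flag.
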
  
  
The same arguments as in the symmetric case yield the following results.  
  
\begin{theorem} \label{thm:facelatticeSkewSH}  
    Every $O(n)$ orbit of faces of the skew-symmetric Schur-Horn orbitope  
    $\SH_N$ contains the pullback of a unique  
    $B_k$-orbit of faces of the $B_k$-permutahedron $\BPi(\hl(N))$.  
\end{theorem}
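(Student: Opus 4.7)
The plan is to follow the three-step strategy in the proof of Theorem~\ref{thm:facelatticeSH}, substituting the normal form for skew matrices for spectral diagonalization, the Weyl group $B_k$ for $\mathfrak{S}_n$, and the skew-symmetric Schur-Horn theorem for its symmetric counterpart.

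First, I would appeal to Theorem~\ref{thm:skewsymspec} (the spectrahedral representation of $\SH_N$) to conclude that every face of $\SH_N$ is exposed. Using the $O(n)$-invariant inner product $\langle A, B\rangle = -\tfrac{1}{2}\Tr(AB)$ on $\skew$, which is positive definite since $B^T=-B$, every face may be written as $\SH_N^B := \{A\in \SH_N : \langle B,A\rangle = h(\SH_N;\langle B,\cdot\rangle)\}$ for some $B\in\skew$. The contragredient $O(n)$-action on functionals is $g\cdot B = gBg^T$, so the $O(n)$-orbit of the face is indexed by the $O(n)$-orbit of $B$.

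Next, I would use the normal form recalled at the start of this subsection to pick a representative $B$ of its $O(n)$-orbit in canonical skew-diagonal form with $\Sdiag(B)=\hl(B)$. In this form, the restriction of $\langle B,\cdot\rangle$ to the skew-diagonal slice $\Sdiag^{-1}(\R^k)$ is a scalar multiple of the standard inner product $\langle\hl(B),\cdot\rangle_{\R^k}$. Combined with the skew-symmetric Schur-Horn theorem ($\Sdiag(\SH_N)=\BPi(\hl(N))$), this yields that $\SH_N^B$ is the $\Sdiag$-preimage of the face of $\BPi(\hl(N))$ exposed by $\langle\hl(B),\cdot\rangle_{\R^k}$. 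Uniqueness of the $B_k$-orbit of this polytopal face then follows from the fact that two canonical representatives $B,B'$ are $O(n)$-conjugate if and only if $\hl(B)$ and $\hl(B')$ lie in the same $B_k$-orbit, which is precisely the residual ambiguity in the normal form.

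The step I expect to require the most care is verifying that the maximum locus of $\langle B,\cdot\rangle$ on $\SH_N$ really equals the $\Sdiag$-preimage of the corresponding face of $\BPi(\hl(N))$. One inclusion is immediate from the skew-symmetric Schur-Horn theorem applied to the functional $\langle\hl(B),\cdot\rangle_{\R^k}$. The reverse inclusion is the skew analog of the reduction in the proof of Corollary~\ref{cor:SH1}: any $A\in\SH_N$ whose skew-diagonal attains the maximum must first be rotated, using the $O(n)$-invariance of $\SH_N$ and the stabilizer of $B$, into a block form compatible with $B$, after which one evaluates $\langle B, A\rangle$ and matches it with the polytopal optimum. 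Once this bookkeeping is in place, the three steps combine to give the stated bijection between $O(n)$-orbits of faces of $\SH_N$ and $B_k$-orbits of faces of $\BPi(\hl(N))$.
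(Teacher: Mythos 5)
Your proposal is correct and is essentially the paper's argument: the paper gives no separate proof here, stating only that ``the same arguments as in the symmetric case yield the following results,'' and your three steps (exposedness from the spectrahedral representation, the skew normal form in place of orthogonal diagonalization so that the functional $\langle B,\cdot\rangle$ factors through $\Sdiag$, and the skew-symmetric Schur--Horn theorem with $B_k$ replacing $\mathfrak{S}_n$) are precisely that transcription of the proof of Theorem~\ref{thm:facelatticeSH}. The only cosmetic difference is that you cite Theorem~\ref{thm:skewsymspec} ahead of where the paper states it, which causes no circularity since that theorem rests only on Proposition~3.11 and the compound-matrix lemma.
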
  
  
\begin{corollary}  
The  skew-symmetric Schur-Horn orbitope  
$\SH_N$ coincides with the set of skew-symmetric matrices $A$ such that  
$|\Sdiag(A)|$ is weakly majorized by  $|\Sdiag(N)|$.  
\end{corollary}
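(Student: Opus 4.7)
The plan is to mimic the proof of Corollary~\ref{cor:SH1} essentially verbatim. The preceding proposition says that the linear projection $\Sdiag$ sends $\SH_N$ onto the $B_k$-permutahedron $\BPi(\hl(N))$. Since $\BPi(\hl(N))$ is by definition the set of vectors $q\in\R^k$ with $|q|$ weakly majorized by $\hl(N)$, this already yields the forward inclusion: for any $A\in\SH_N$, the skew-diagonal $\Sdiag(A)$ lies in $\BPi(\hl(N))$, so $|\Sdiag(A)|$ is weakly majorized by $\hl(N)=|\Sdiag(N)|$ (after replacing $N$ by a conjugate $gNg^T$ in normal form, which does not affect $\SH_N$).

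For the reverse inclusion I would, as in Corollary~\ref{cor:SH1}, first verify the equivalence on matrices in normal form and then propagate it to all skew-symmetric matrices using $O(n)$-invariance. If $A$ is already in the normal form described at the start of Section 3.2, then $|\Sdiag(A)|=\hl(A)$ is literally the decreasing vector of positive imaginary parts of the eigenvalues of $A$. The proposition guarantees that whenever this vector lies in $\BPi(\hl(N))$ there is some matrix $B\in\SH_N$ with $\Sdiag(B)=\Sdiag(A)$; since any preimage in normal form is determined by its skew-diagonal, we may in fact take $B=A$, so $A\in\SH_N$.

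To remove the normal-form hypothesis I would invoke $O(n)$-invariance of both sides. The orbitope $\SH_N$ is $O(n)$-invariant by construction; the invariants $\hl(A)$ (imaginary parts of eigenvalues) are preserved by $O(n)$-conjugation as well, so the condition on the right-hand side, read intrinsically as $\hl(A)$ weakly majorized by $\hl(N)$, is also $O(n)$-invariant. Because every skew-symmetric matrix is $O(n)$-conjugate to one in normal form, the equivalence verified in the normal-form case extends to arbitrary $A\in\skew$.

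The main subtlety to handle carefully is that the linear map $\Sdiag$ is not itself $O(n)$-equivariant: it merely reads off a fixed collection of matrix entries. One must therefore interpret the symbol $|\Sdiag(\cdot)|$ on both sides of the stated equivalence as the spectral invariant $\hl(\cdot)$ one obtains after bringing the matrix to normal form, which is precisely what the preceding proposition justifies. Once that is observed, the reduction-to-normal-form argument above makes the proof routine.
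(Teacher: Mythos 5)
Your overall route is the same as the paper's: the paper states that ``the same arguments as in the symmetric case'' apply, and its proof of the symmetric analogue (Corollary~\ref{cor:SH1}) has exactly your two-step shape --- check the claim on matrices in normal form, then extend by $O(n)$-invariance of both sides. Your remark that $\Sdiag$ is not equivariant, so that the condition must be read through the conjugation-invariant $\hl(\cdot)$, is correct and worth making explicit: taken literally, the right-hand side would be an unbounded cylinder over $\BPi(\hl(N))$.

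There is, however, a genuine gap in your normal-form step. From $\Sdiag(\SH_N)=\BPi(\hl(N))$ you obtain \emph{some} $B\in\SH_N$ with $\Sdiag(B)=\Sdiag(A)$, but nothing guarantees that this $B$ is in normal form; it may have nonzero entries outside the skew-diagonal positions. The uniqueness of the normal-form matrix with a prescribed skew-diagonal therefore does not let you ``take $B=A$'': what you would need is precisely that the fiber of $\Sdiag$ over $\Sdiag(A)$ meets $\SH_N$ in its normal-form point, which is the assertion being proved. (The same reasoning in the symmetric case would derive, from surjectivity of $\diag$ onto $\Pi(\l(M))$ alone, that every diagonal matrix with diagonal in $\Pi(\l(M))$ lies in $\SHsym_M$ --- a true statement, but not for that reason.) The argument that actually makes the step ``clear'' is a vertex argument: every signed permutation of $\hl(N)$, inserted into the skew-diagonal positions of the normal form, is itself a point of the orbit $O(n)\cdot N$, since signed permutations of the $2\times 2$ blocks are realized by orthogonal conjugations; hence the linear section of $\Sdiag$ sending $q\in\R^k$ to the normal-form matrix with skew-diagonal $q$ carries $\BPi(\hl(N))$, the convex hull of those signed permutations, into $\conv(O(n)\cdot N)=\SH_N$. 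Alternatively, your $B$ can be repaired by averaging it over the conjugations by diagonal sign matrices whose signs agree on the two indices of each skew-diagonal position: this convex combination remains in $\SH_N$, preserves the skew-diagonal entries, and annihilates all other entries, producing exactly the normal-form matrix $A$.
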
  
  
\begin{example}  
Fix $n=6$ and $k=3$, and $p=(1,2,3)$. Then the system   
(\ref{eq:Bpermineq}) consists of $26$ linear inequalities,  
namely, six inequalities $\pm q_i \leq 3$,  
twelve inequalities $\pm q_i \pm q_j \leq 5$, and  
eight  inequalities $\pm q_i \pm q_j \pm q_k \leq 6$.  
Their solution set is the $B_3$-permutahedron $\BPi(1,2,3)$, 
commonly known as the {\em truncated cuboctahedron},  
and it  has $48$ vertices, $72$ edges and  
$26$ facets (six octagons, twelve squares and eight hexagons).  
A picture is shown in  Figure \ref{F:B3permuta}.

\begin{figure}[htb]  
\[  
    \includegraphics[width=6.5cm]{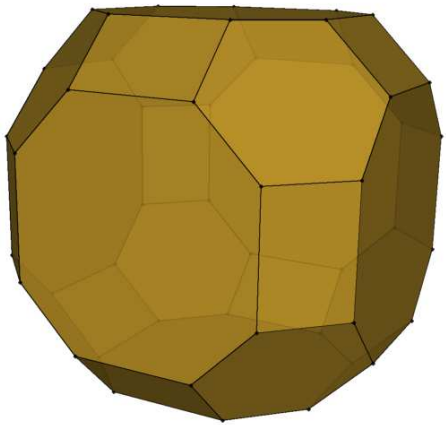}  
\]  
\vskip -0.3cm  
\caption{The $B_3$-permutahedron is the truncated cuboctahedron.}  
\label{F:B3permuta}  
\end{figure}  
  
Let $N$ denote a general skew-symmetric $6 {\times} 6$-matrix.  
Theorem \ref{thm:facelatticeSkewSH} implies that the  
facets of the $15$-dimensional orbitope $\SH_N$ come  
in three families, corresponding to $O(6)$-orbits of the octagons, squares, and hexagons 
in Figure \ref{F:B3permuta}.   
The facets arising from the octagons are skew-symmetric Schur-Horn  
orbitopes for $SO(4)$ with skew-diagonal $(1,2)$ and therefore have dimension six.  
The facets arising from the squares are the product of a line segment and a disc, 
coming from $O(2)\times SO(2)$ with $O(2)$ acting by the determinant. 
The facets arising from the hexagons are $O(3)$-orbitopes  
isomorphic to symmetric Schur-Horn orbitopes with eigenvalues $(1,2,3)$ and 
therefore have  dimension five.   
\qed  
\end{example}  
  
We next present a spectrahedral description of an  
arbitrary skew-symmetric Schur-Horn orbitope $\SH_N$.  
To derive this, we return to symmetric matrices  
and their real eigenvalues.  
  
\begin{lemma}  
    Let $N \in \skew$ be a matrix with eigenvalues $\pm i \hl_1,\dots, \pm i  
    \hl_k$ and let  
    \[  
        \hat{N}  \,\,= \,\,         \left(  
        \begin{array}{rr}  
            0 & N \\  
            -N & 0 \\  
        \end{array} \right) \,\,\in\,\,  
        \mathrm{Sym}_2\R^{2n}.  
    \]  
    Then $\hat{N}$ has eigenvalues   
    $\hl_1,\hl_1,\hl_2,\hl_2,\dots,\hl_k,\hl_k,  
    -\hl_1,-\hl_1,-\hl_2,-\hl_2,\dots,-\hl_k,-\hl_k$.  
For any $1 \leq j \leq k$, the additive compound matrix  
$ \Sfunc_{2j}(\hat{N}) $  
 has largest eigenvalue $2(\hl_1 + \hl_2 + \cdots  
+ \hl_j)$.   
    \end{lemma}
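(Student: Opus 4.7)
The plan is to establish the two assertions in sequence: first the full spectrum of $\hat{N}$, then the optimization claim about $\Sfunc_{2j}(\hat{N})$ via Lemma~\ref{lem:compoundeigen}.

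First I would verify that $\hat{N}$ is symmetric by a direct block computation using $N^T=-N$, which flips the two off-diagonal blocks while negating each, giving back $\hat{N}$. Then I would identify the spectrum of $\hat{N}$. My preferred route is the squaring trick: a short block calculation shows that
\[
\hat{N}^2 \,=\, \begin{pmatrix} -N^2 & 0 \\ 0 & -N^2 \end{pmatrix}.
\]
Over $\C$, the eigenvalues of $N$ are $\pm i\hl_1,\ldots,\pm i\hl_k$ (plus a $0$ if $n=2k{+}1$), so $-N^2$ has each $\hl_j^2$ with multiplicity $2$; hence $\hat{N}^2$ has each $\hl_j^2$ with multiplicity $4$. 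Because $\hat{N}$ is real symmetric, its eigenvalues are real and lie in $\{\pm\hl_j\}$, with the two multiplicities summing to $4$ for each $j$. The trace of $\hat{N}$ vanishes (the diagonal blocks are zero), forcing these multiplicities to be balanced at $2$ and $2$ for every $j$. This yields the stated list of eigenvalues, with an additional pair of zeros in the odd case that plays no role in what follows.

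For readers preferring a constructive verification, I would also note that each conjugate pair $\pm i\hl_j$ of $N$ corresponds to a real $2$-plane spanned by $u_j,v_j\in\R^n$ with $Nu_j=-\hl_j v_j$ and $Nv_j=\hl_j u_j$; then a one-line check shows that $(u_j,v_j)$ and $(-v_j,u_j)$ are eigenvectors of $\hat{N}$ with eigenvalue $\hl_j$, while $(v_j,u_j)$ and $(u_j,-v_j)$ are eigenvectors with eigenvalue $-\hl_j$, and these $4k$ vectors are linearly independent whenever the $u_j,v_j$ are.

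Finally, by Lemma~\ref{lem:compoundeigen}, the eigenvalues of the additive compound $\Sfunc_{2j}(\hat{N})$ are exactly the sums of $2j$ eigenvalues of $\hat{N}$ indexed by the basis $e_{i_1}\wedge\cdots\wedge e_{i_{2j}}$ in an eigenbasis. The maximum is achieved by selecting the $2j$ largest eigenvalues, and since each $\hl_i$ appears with multiplicity $2$ in the spectrum just computed, the optimal choice picks both copies of $\hl_1,\ldots,\hl_j$, giving $2(\hl_1+\cdots+\hl_j)$. There is no real obstacle here: the content is in correctly identifying the multiplicities, so the main care is in the balancing argument using trace (or, alternatively, a dimension count on the explicit eigenvectors) to rule out any lopsided distribution like multiplicity $3$ for $+\hl_j$ and $1$ for $-\hl_j$.
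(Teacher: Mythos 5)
The paper actually states this lemma without giving a proof, so there is nothing to compare against; judged on its own terms, your argument is essentially correct, but one step in your ``preferred route'' overreaches. The squaring trick correctly shows that every eigenvalue of the real symmetric matrix $\hat N$ lies in $\{\pm\hl_j\}$ and that the eigenspace of $\hat N^2$ for $\hl_j^2$ has dimension $4$, but the single global condition $\Tr(\hat N)=0$ does \emph{not} force the split to be $(2,2)$ for every $j$: when several distinct magnitudes occur, an imbalance at one pair can be compensated by the opposite imbalance at another (e.g.\ multiplicities $(3,1)$ at $\pm 2$ together with $(1,3)$ at $\pm 1$ also give trace zero). To rule this out you need a per-$j$ statement, for instance that conjugation by $\left(\begin{smallmatrix}{\rm Id}_n & 0\\ 0 & -{\rm Id}_n\end{smallmatrix}\right)$ sends $\hat N$ to $-\hat N$, so the spectrum is symmetric about $0$ with multiplicities; or the vanishing of $\Tr(\hat N^{2m+1})$ for all $m$ plus a Vandermonde argument.

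Fortunately, the ``alternative'' constructive verification you offer is not optional decoration --- it is what actually completes the proof. The $4k$ explicit eigenvectors $(u_j,v_j),(-v_j,u_j)$ for $+\hl_j$ and $(v_j,u_j),(u_j,-v_j)$ for $-\hl_j$ (together with $(w,0),(0,w)$ for $w\in\ker N$ when $n$ is odd) form a full eigenbasis and pin down all multiplicities as $2$. With that in hand, the last step --- invoking Lemma~\ref{lem:compoundeigen} and maximizing a sum of $2j$ eigenvalues of $\hat N$ by taking both copies of each of $\hl_1,\dots,\hl_j$ --- is exactly right and is clearly the mechanism the authors intend, in parallel with their proof of Theorem~\ref{thm:symspec}.
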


We conclude that each skew-symmetric Skew-Horn orbitope  
 $\BPi(\hl(N))$ is a spectrahedron:  
   
\begin{theorem} \label{thm:skewsymspec}  
 Let $N \in \skew$ with $\hl(N) =     (\hl_1 \ge \cdots \ge  \hl_k)$. Then  
         \[  
    \SH_N \, = \, \left\{ A \in \skew:  
    2(\hl_1 + \cdots + \hl_j) {\rm Id}_{\binom{2n}{2j}} -  
    \Sfunc_{2j}(\hat{A}) \,  
        \succeq \,0\,\,\,  
    \hbox{for}\,\, j = 1,\ldots,k \right\}.  
    \]  
\end{theorem}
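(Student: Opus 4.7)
The plan is to mirror the proof of Theorem~\ref{thm:symspec}, using the Corollary to Theorem~\ref{thm:facelatticeSkewSH} together with the lemma stated immediately above the theorem. The two ingredients give, respectively, a combinatorial description of $\SH_N$ by weak majorization of spectra, and a dictionary between weak majorization inequalities and linear matrix inequalities on additive compound matrices.

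First, by $O(n)$-invariance of $\SH_N$ and the Corollary, we may bring $A$ into normal form so that $|\Sdiag(A)|$ becomes $\hl(A) = (\hl_1(A),\ldots,\hl_k(A))$; then $A \in \SH_N$ if and only if $\hl(A)$ is weakly majorized by $\hl(N) = (\hl_1,\ldots,\hl_k)$. Specializing the $B_k$-permutahedron inequalities (\ref{eq:Bpermineq}) to $J = \emptyset$ and $I = \{1,\ldots,j\}$ shows that, for two non-negative vectors already sorted in decreasing order, weak majorization is equivalent to the $k$ scalar inequalities
$$
\hl_1(A) + \cdots + \hl_j(A) \,\le\, \hl_1 + \cdots + \hl_j, \qquad j = 1,\ldots,k.
$$
The other cases of (\ref{eq:Bpermineq}) are redundant since all entries are non-negative.

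Next I apply the lemma preceding the theorem, with $N$ replaced by the general skew-symmetric matrix $A$: the largest eigenvalue of the additive compound $\Sfunc_{2j}(\hat A)$ equals $2(\hl_1(A) + \cdots + \hl_j(A))$. Consequently the linear matrix inequality
$$
2(\hl_1 + \cdots + \hl_j)\,{\rm Id}_{\binom{2n}{2j}} - \Sfunc_{2j}(\hat A) \,\succeq\, 0
$$
is equivalent to the $j$-th inequality above. Taking the conjunction over $j = 1,\ldots,k$ yields the claimed spectrahedral description.

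The main substantive step is the lemma itself, which is the analogue of Lemma~\ref{lem:compoundeigen} in the skew setting. To prove its first assertion, conjugate $A$ into its $O(n)$-normal form, so that $\hat A$ becomes a direct sum of $4 \times 4$ blocks (together with a zero summand if $n$ is odd); after a permutation of coordinates each such block splits as $\pm\begin{pmatrix} 0 & \hl_i(A) \\ \hl_i(A) & 0 \end{pmatrix}$, giving the eigenvalues $\pm\hl_i(A)$ of multiplicity two. The second assertion then follows from Lemma~\ref{lem:compoundeigen}, which expresses the eigenvalues of $\Sfunc_{2j}(\hat A)$ as all sums of size-$2j$ subsets of eigenvalues of $\hat A$; the maximum such sum is plainly attained by the $2j$ largest, namely the paired values $\hl_1(A), \hl_1(A), \ldots, \hl_j(A), \hl_j(A)$, with total $2(\hl_1(A) + \cdots + \hl_j(A))$.
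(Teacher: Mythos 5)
Your argument is correct and follows exactly the route the paper intends: combine the weak-majorization characterization of $\SH_N$ (the corollary to Theorem~\ref{thm:facelatticeSkewSH}) with the lemma on the eigenvalues of $\Sfunc_{2j}(\hat{A})$, just as Theorem~\ref{thm:symspec} combines Corollary~\ref{cor:SH1} with Lemma~\ref{lem:compoundeigen}. You also correctly fill in the details the paper leaves implicit, namely that for sorted non-negative vectors only the inequalities with $J=\emptyset$ and $I=\{1,\dots,j\}$ in (\ref{eq:Bpermineq}) are active, and the block-diagonalization of $\hat{A}$ proving the lemma.
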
  
  
From this theorem we can derive a description of the  
algebraic boundary as before, and again the issue arises  
that the $\tilde \lambda_j$ lie in an extension  
$\K$ over the field of definition of $N$,  
which will usually be $\Q$.  
 At present we do not know whether  
Theorems \ref{thm:symspec} and \ref{thm:skewsymspec}  
can be extended to obtain spectrahedral representations of  
the respective orbitopes over $\Q$.  
  
We close this section with one more example of a skew-symmetric  
Schur-Horn orbitope.  
  
\begin{example}  
    Consider the skew-symmetric Schur-Horn orbitope $\SH_N$ for some $N \in  
    \skew$ with $\hl(N) = (1,0,\dots,0) \in \R^k$. According to   
Theorem \ref{thm:skewsymspec}, a spectrahedral representation~is  
    \[\SH_N \, = \, \bigl\{ \,A \in \skew:  
        \mathcal{L}_2(\hat{A}) \,\preceq \, 2 \cdot {\rm Id}_{2n} \bigr\}.    \]  
  
    The projection $\Sdiag(\SH_N)$ to the skew diagonal is the  
    \emph{crosspolytope} $\conv \{ \pm e_1, \dots, \pm e_k \}$.  
    This a regular polytope with symmetry  
    group $B_k$, and it has only one orbit of faces in each dimension.  
The orbitope $\SH_N$ is the $d=2$ instance of the {\em Grassmann orbitope}  
$\mathcal{G}_{d,n}$.  
These are important in the theory of calibrated manifolds,   
and we shall study them in  Section 7. \qed  
\end{example}

%%%%%%%%%%%%%%%%%%%%%%%%%%%%%%%%%%%%%%%%%%%%%%%%%%%%%%%%%%%%%%%%%%%%%%%%%%%%%   
  
\section{Tautological Orbitopes}  
\label{sec:tautological}  
  
We argued in Section~\ref{sec:Setup} that, given a compact group $G$ acting  
algebraically on $V \simeq\R^n$, we can identify $G$ with a subgroup of $O(n)$,  
or even  of $SO(n)$ when $G$ is connected.  
The ambient space $ \End(V) \simeq \mathfrak{gl}_n $ is itself  
an $n^2$-dimensional real  
representation of the group $G$.  
The action of $G$ on $\End(V)$ is by left multiplication.  
The orbit of the identity matrix ${\rm Id}_n$ under this action  
is the group $G$ itself. We  call the corresponding orbitope  
$\,\conv(G) =\conv(G \cdot {\rm Id}_n)\,$  
the \emph{tautological orbitope} for the pair  
$(G,V)$. This orbitope lives in $ \End(V)$, and it serves as  
an initial object because it maps linearly to every  
orbitope $\conv(G \cdot v)$ in $V$.  Tautological  
orbitopes of finite permutation groups have been studied  
 under the name of \emph{permutation polytopes}  
(see \cite{onn93}).   
The most famous of them all is the {\em Birkhoff polytope} for  
 $G = \mathfrak{S}_n$, which was studied for other Coxeter groups by 
McCarthy, Ogilvie, Zobin, and Zobin~\cite{Zobin}.
  
In this section we investigate the tautological orbitopes for  
the full groups $O(n)$ and $SO(n)$.  Similar to the Schur-Horn orbitopes in  
Section~\ref{sec:SchurHorn}, the facial structure is governed by polytopes  
arising from the projection onto the diagonal.  We begin with the  
example  $G = SO(3)$.  
  
\subsection{Rotations in $3$-dimensional space}  
  
The group $SO(3)$ of  $3 {\times} 3$ rotation matrices has dimension three. 
%is a real algebraic variety of dimension three.  
Its  tautological orbitope is a convex body of dimension nine.  
The following spectrahedral representation was suggested to us by  
 Pablo Parrilo.  
  
\begin{proposition}  
\label{prop:pabloSO3}  
 The tautological orbitope  
 ${\rm conv}(SO(3))$  is a spectrahedron  
whose boundary is a quartic hypersurface.  In fact, a  
  $3 {\times} 3$-matrix $X = (x_{ij})$ lies in  
 ${\rm conv}(SO(3))$ if and only~if  
\begin{equation}  
\label{eq:magic}  
\begin{pmatrix}  
1 {+} x_{11} {+} x_{22} {+} x_{33} & x_{32} - x_{23}    
& x_{13} - x_{31}  &  x_{21} - x_{12} \\  
x_{32} - x_{23}  & 1{+}x_{11} {-} x_{22} {-} x_{33} &    
x_{21} + x_{12}  &  x_{13} + x_{31} \\  
x_{13} - x_{31}  & x_{21} + x_{12}    
& 1 {-} x_{11} {+} x_{22} {-} x_{33} & x_{32} + x_{23} \\  
x_{21} - x_{12}  & x_{13} + x_{31}   
 & x_{32} + x_{23}  & 1 {-} x_{11} {-} x_{22} {+} x_{33}   
\end{pmatrix} \,\,\succeq \,\, 0.  
\end{equation}  
\end{proposition}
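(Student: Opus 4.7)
The plan is to identify the mysterious $4\times 4$ matrix in (\ref{eq:magic}) as the rank-$1$ symmetric tensor $4qq^T$ coming from the quaternion double cover $SU(2) \twoheadrightarrow SO(3)$. Recall that any rotation $R \in SO(3)$ is the image of some unit quaternion $q = (q_0,q_1,q_2,q_3) \in \mathbb{S}^3$ under the well-known map $R(q)$, where each entry of $R(q)$ is a homogeneous quadratic form in $q$ (for instance $R(q)_{11} = q_0^2 + q_1^2 - q_2^2 - q_3^2$ and $R(q)_{21} = 2(q_1 q_2 + q_0 q_3)$). The first step is the purely mechanical verification that, when $X = R(q)$ with $\|q\|=1$, the matrix in (\ref{eq:magic}) is exactly $4qq^T$: e.g.\ $1 + x_{11} + x_{22} + x_{33} = 4q_0^2$ after using $q_0^2+q_1^2+q_2^2+q_3^2=1$, and $x_{32}-x_{23} = 4q_0q_1$, etc. Denote by $M : \R^{3\times 3} \to \mathrm{Sym}_2\R^4$ the affine map whose value at $X$ is the matrix in (\ref{eq:magic}).

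Next I would record two easy facts about $M$: it is affine (its linear part is injective, since each of the nine entries $x_{ij}$ can be recovered from sums and differences of entries of $M(X)$), and its trace is identically $4$, independent of $X$. The forward inclusion $\mathrm{conv}(SO(3)) \subseteq \{X : M(X)\succeq 0\}$ is then immediate: for every $R \in SO(3)$, Step 1 gives $M(R) = 4qq^T \succeq 0$, and since $M$ is affine and $\sum c_i = 1$ in any convex combination, $M$ sends $\mathrm{conv}(SO(3))$ into the cone of PSD matrices.

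For the reverse inclusion, suppose $M(X)\succeq 0$. Use the spectral decomposition to write $M(X) = \sum_{i=1}^r \mu_i v_i v_i^T$ with $\mu_i \geq 0$ and $v_i \in \mathbb{S}^3$, and then set $c_i = \mu_i/4$, which satisfies $\sum c_i = \tfrac{1}{4}\Tr(M(X)) = 1$. Each rank-$1$ summand $4 c_i\cdot v_i v_i^T$ is the image $M(R(v_i))$ of a genuine rotation by Step 1. Since $M$ is affine with matching constants ($\sum c_i = 1$) and is injective on its linear part, applying $M^{-1}$ to the identity $\sum c_i M(R(v_i)) = M(X)$ yields $X = \sum c_i R(v_i) \in \mathrm{conv}(SO(3))$. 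This proves the spectrahedral representation. Finally, since $M$ is affine and injective, the algebraic boundary of the spectrahedron is cut out by the vanishing of $\det M(X)$, which is a polynomial of degree $4$ in the entries $x_{ij}$; checking irreducibility (and hence that the quartic is the true algebraic boundary and not a proper factor) is the only nonroutine loose end, and can be done either by a direct Gröbner-basis argument or by observing that on the orbit $SO(3)$ itself the matrix $M$ has rank $1$ and the dual variety to this three-dimensional orbit in nine-dimensional ambient space is already known to be the irreducible quartic hypersurface $\{\det M = 0\}$.

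The main obstacle, conceptually, is finding the identification $M(X) = 4qq^T$; once that is in hand, everything reduces to the standard facts that rank-$1$ PSD matrices are extreme rays of the PSD cone, that an affine slice of the PSD cone by a trace constraint is the convex hull of its rank-$1$ elements, and that the boundary of such a slice is cut out by the determinant.
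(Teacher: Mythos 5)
Your proposal is correct and follows essentially the same route as the paper: both identify $\mathrm{conv}(SO(3))$ with the free spectrahedron of trace-one positive semidefinite $4\times 4$ matrices via the quaternion double cover $SU(2)\to SO(3)$, with the matrix in \eqref{eq:magic} being (four times) the inverse of the paper's linear map \eqref{eq:pabloSU2}. Your explicit spectral-decomposition argument for the reverse inclusion is just an unpacking of the paper's one-line observation that the linear isomorphism commutes with taking convex hulls of the rank-one locus.
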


\begin{proof}   
We first claim that ${\rm conv}(SO(3))$ coincides with the set of all   
$3 {\times} 3$-matrices  
\begin{equation}  
\label{eq:pabloSU2}  
\begin{pmatrix}  
u_{11} {+} u_{22} {-} u_{33} {-} u_{44} &  2 u_{23} -2u_{14} & 2u_{13} + 2 u_{24} \\  
2 u_{23}+2 u_{14} & u_{11}{-}u_{22}{+} u_{33}{-} u_{44} & 2 u_{34}-2 u_{12} \\  
2 u_{24}-2 u_{13} &  2 u_{12}+2 u_{34} & u_{11} {-} u_{22}{-} u_{33} {+} u_{44}  
\end{pmatrix}  
\end{equation}  
where $U = (u_{ij})$ runs over all  
positive semidefinite  $4 {\times} 4$-matrices having trace $1$.

Positive semidefinite $4 {\times} 4$-matrices with  
both trace $1$ and rank $1$ are of the form  
$$  
U \,\,\, = \,\,\,  
\frac{1}{a^2+b^2+c^2+d^2}  
\begin{pmatrix}  
a^2 & ab & ac & ad \\  
ab & b^2  & bc & bd \\  
ac & bc & c^2 & cd \\  
ad  & bd & cd & d^2  
\end{pmatrix}.  
$$  
Their convex hull is the free spectrahedron of Example \ref{ex:freespec}.  
The image of the above rank $1$ matrices $U$ under the linear map  
(\ref{eq:pabloSU2}) is precisely the group  $SO(3)$.  
This parametrization is known as the {\em Cayley transform}.  
Geometrically, it corresponds to the double cover  
$SU(2) \rightarrow SO(3)$.   
The claim follows because the linear map  
commutes with taking the convex hull.  
  
The symmetric $4 {\times} 4$-matrices $U = (u_{ij})$ with  
${\rm trace}(U) = 1$ form a nine-dimensional affine space,  
and this space is isomorphic to the nine-dimensional  
space of all $3 {\times} 3$-matrices $X = (x_{ij})$ under  
the linear map given in \eqref{eq:pabloSU2}. We can express  
each $u_{ij}$ in terms of the $x_{kl}$ by  
inverting the linear relations  
$ x_{11} = u_{11} + u_{22} - u_{33} - u_{44}$,   
$ x_{12} = 2 u_{23} - 2u_{14}$, etc.  
The resulting symmetric $4 {\times} 4$-matrix $U $  
is precisely the matrix \eqref{eq:magic}  in the statement of  
Proposition \ref{prop:pabloSO3}.  
\end{proof}

The ideal of the group $O(3)$ is generated by  
the entries of the $3 {\times} 3$-matrix $X \cdot X^T - {\rm Id}_3$,  
while the prime ideal of $SO(3)$ is that same ideal plus  
$\langle {\rm det}(X)-1 \rangle$. We can check that the  
prime ideal of $SO(3)$ coincides with  
the ideal generated by the $2 {\times} 2$-minors  
of the matrix~\eqref{eq:magic}.   
Thus the group $SO(3)$ is recovered as the set of   
matrices~\eqref{eq:magic} of rank one.  
  
Proposition \ref{prop:pabloSO3} implies that ${\rm conv}(SO(3))$  
is affinely isomorphic to the free spectrahedron for $n=4$,  
that is, to the set of positive semidefinite $4 {\times} 4$-matrices  
with trace equal to $1$. This implies a characterization of  
all faces of the tautological orbitope for $SO(3)$.  
First, all faces are exposed because ${\rm conv}(SO(3))$  
is a spectrahedron. All of its proper faces are free spectrahedra,  
for $n=1,2,3$, so they have dimensions $0$, $2$ and $5$,  
as seen in Example \ref{ex:freespec}.

\subsection{The orthogonal group}  
We now examine   
$\,O(n) \, = \,\{X \in \R^{n \times n} : X \cdot X^T = {\rm Id}_n\}$.  
As before, let $\diag : \R^{n \times n} \rightarrow \R^n$   
denote the projection of the $n {\times} n$-matrices onto  
their diagonals.  
  
\begin{lemma} \label{lem:Ontodiagonal}  
The projection $\diag({\rm conv}(O(n))$ of the tautological  
    orbitope for the orthogonal group $O(n)$ to its   
diagonal is precisely the $n$-dimensional cube $[-1, +1]^n$.  
\end{lemma}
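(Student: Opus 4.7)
The plan is to prove both inclusions directly, exploiting the fact that the diagonal projection commutes with convex combinations. For the inclusion $\diag(\conv(O(n))) \subseteq [-1,+1]^n$, I would argue that for any $X \in O(n)$ the relation $X \cdot X^T = \mathrm{Id}_n$ says that every row of $X$ is a unit vector, so in particular $X_{ii}^2 \leq \sum_j X_{ij}^2 = 1$, giving $|X_{ii}| \leq 1$ for each $i$. Hence $\diag(O(n)) \subseteq [-1,+1]^n$, and since the diagonal projection is linear and $[-1,+1]^n$ is convex, passing to the convex hull on the left preserves this containment.

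For the reverse inclusion, the key observation is that every vertex of $[-1,+1]^n$ is itself the diagonal of an orthogonal matrix. Indeed, for any sign vector $\epsilon = (\epsilon_1,\dots,\epsilon_n) \in \{-1,+1\}^n$, the diagonal matrix $D_\epsilon := \mathrm{diag}(\epsilon_1,\dots,\epsilon_n)$ satisfies $D_\epsilon \cdot D_\epsilon^T = \mathrm{Id}_n$, so $D_\epsilon \in O(n)$, and clearly $\diag(D_\epsilon) = \epsilon$. Given any $p \in [-1,+1]^n$, write $p = \sum_{\epsilon} \lambda_\epsilon \, \epsilon$ as a convex combination of vertices of the cube; then $M := \sum_\epsilon \lambda_\epsilon D_\epsilon$ lies in $\conv(O(n))$ and $\diag(M) = p$ by linearity. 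This yields $[-1,+1]^n \subseteq \diag(\conv(O(n)))$ and completes the proof.

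There is essentially no difficult step here: the statement follows from the row-norm bound in one direction and from the existence of the $2^n$ signed diagonal orthogonal matrices in the other. If anything, the only subtlety is remembering to use linearity of $\diag$ together with the fact that the $D_\epsilon$ already exhaust the vertices of the target cube, which is what makes this lemma so clean and is presumably why the authors record it as a stepping stone toward the more substantive face-structure results about $\conv(O(n))$ that will follow.
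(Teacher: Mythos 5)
Your proof is correct and follows essentially the same route as the paper: the row/column unit-norm bound gives one inclusion, and the $2^n$ diagonal sign matrices (whose diagonals are the cube's vertices) give the other; you merely spell out the linearity of $\diag$ and the convex-combination step that the paper leaves implicit.
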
  
  
\begin{proof}  
  The columns of a matrix $X \in O(n)$ are %pairwise orthogonal   
  unit vectors. 
  Thus every coordinate $x_{ij}$ in bounded by $1$ in absolute  
  value and $\diag({\rm conv}(O(n))$ is a subset of the cube.  
  For the reverse inclusion, note that  
  %The reverse  inclusion follows by observing that  
  all $2^n$ diagonal matrices with entries $\pm1$ are orthogonal matrices.  
\end{proof}  
  
The cube $[-1,+1]^n$ is the special   
$B_n$-permutahedron $\BPi(1,1,\dots,1)$.    
As with   
Schur-Horn orbitopes, the projection onto  this polytope reveals the facial  
structure. As general endomorphisms are not normal, the key concept of  
diagonalizability is replaced by that of \emph{singular value decomposition}.  
Recall that for any linear map $A \in \R^{n \times n}$ there are  
orthogonal transformations $U, V \in O(n)$ such that $U A V^T$ is diagonal  
with entries $\s(A) = (\s_1(A) {\ge} \s_2(A) {\ge} \cdots {\ge} \s_n(A)) \in  
\R^n_{\ge0}$. These entries are the \emph{singular values} of  
$A$.   
  
We shall see in (\ref{eq:Onspec}) that ${\rm conv}(O(n))$ is  
a spectrahedron, hence all of its faces are exposed faces.  
The following result recursively characterizes all faces of this  
tautological orbitope.

\begin{theorem} \label{thm:tautoOn}  
Let $\ell =  \inner{ B, \cdot \,}$ be a linear function on $\R^{n \times n}$ with   
$B \in \R^{n \times n}$.   
Then the face of ${\rm conv}(O(n))$  in direction   
$\ell$ is isomorphic to ${\rm conv}(O(m))$ where $m = \dim \ker (B)$.   
\end{theorem}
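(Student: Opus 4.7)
The plan is to use the singular value decomposition to reduce the problem to the case where $B$ is a diagonal matrix with nonnegative entries, and then directly identify the maximizers of $\ell$ on $O(n)$.

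Let $B = U\Sigma V^T$ be a singular value decomposition, where $U,V \in O(n)$ and $\Sigma = \diag(\sigma_1,\dotsc,\sigma_n)$ with $\sigma_1 \geq \dotsb \geq \sigma_n \geq 0$. Since $\rank(B)=n-m$, exactly $\sigma_1,\dotsc,\sigma_{n-m}$ are positive and $\sigma_{n-m+1} = \dotsb = \sigma_n = 0$. The map $X \mapsto U^T X V$ is an isometry of $\R^{n\times n}$ that preserves $O(n)$ (hence $\conv(O(n))$) and sends the linear functional $\langle B,\cdot\rangle = \Tr(B^T\cdot)$ to $\langle \Sigma,\cdot\rangle$. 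So the face of $\conv(O(n))$ in direction $\ell$ is isomorphic to its face in direction $\langle \Sigma,\cdot\rangle$, and it suffices to treat this case.

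Next I would compute the maximum of $\langle \Sigma,X\rangle = \sum_{i=1}^n \sigma_i X_{ii}$ over $X \in O(n)$. Each column of $X$ is a unit vector, so $|X_{ii}|\leq 1$, giving the bound $\langle \Sigma,X\rangle \leq \sigma_1+\dotsb+\sigma_{n-m} = \Tr(\Sigma)$. Since $X={\rm Id}_n$ achieves this bound, $h(\conv(O(n));\langle \Sigma,\cdot\rangle) = \sum_{i=1}^{n-m}\sigma_i$. Equality forces $X_{ii}=1$ for every $i$ with $\sigma_i>0$, i.e. for $i=1,\dotsc,n-m$. Because the $i$-th column has unit length and its $i$-th entry equals $1$, this column must equal $e_i$ for $i=1,\dotsc,n-m$. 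Orthogonality of $X$ then forces the remaining columns to lie in the orthogonal complement $\mathrm{span}(e_{n-m+1},\dotsc,e_n)$, where they form an orthonormal basis. Thus the set of maximizers of $\langle \Sigma,\cdot\rangle$ on $O(n)$ is exactly the block-diagonal subgroup
\[
\bigl\{\,{\rm Id}_{n-m}\oplus Y \,:\, Y\in O(m)\,\bigr\}\,.
\]

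The face of $\conv(O(n))$ in direction $\langle \Sigma,\cdot\rangle$ is the convex hull of these maximizers, which equals $\{{\rm Id}_{n-m}\oplus Z : Z \in \conv(O(m))\}$. The affine map $Z\mapsto {\rm Id}_{n-m}\oplus Z$ is an affine isomorphism between $\conv(O(m))$ and this face, completing the proof. The only step requiring care is the characterization of the equality case: one must check that each constraint $X_{ii}=1$ rigidifies the corresponding column to $e_i$ and that the residual freedom is exactly a copy of $O(m)$ in the bottom-right block; but this is immediate from unitarity of the columns together with the Gram-Schmidt structure, so no serious obstacle arises.
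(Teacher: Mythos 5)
Your proof is correct and follows essentially the same route as the paper: reduce via singular value decomposition and trace cyclicity to the diagonal functional $\langle\Sigma,\cdot\rangle$, then identify the maximizing subset of $O(n)$ as the subgroup fixing $e_1,\dotsc,e_{n-m}$, whose convex hull is a copy of $\conv(O(m))$. The only difference is that you spell out the equality-case analysis (each column with $X_{ii}=1$ is rigidified to $e_i$), which the paper asserts without detail; that verification is accurate.
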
  
  
\begin{proof}  
    Let $\ell( \cdot ) = \inner{ B, \cdot \,}$ be a linear function with $B \in  
\R^{n \times n}$, so that $\ell(A)=\mbox{Trace}(AB)$.   
We fix a singular value decomposition $U\Sigma V = B$   
of the matrix $B$.   
Here $\Sigma$ is a diagonal matrix $n {\times} n$ with its first   
$\,n-m \,$ entries positive and remaining $m$ entries zero.  
This matrix also defines a linear function   
 $\ell'( \cdot ) = \inner{ \Sigma, \cdot \,}$ on $\R^{n \times n}$.  
Cyclic invariance of the trace ensures that the faces  
${\rm conv}(O(n))^\ell$ and  
${\rm conv}(O(n))^{\ell'}$ are isomorphic.  
The subset of $O(n)$ at which $\ell'$ is maximized is  
the subgroup $\{ g \in O(n) : g \cdot e_i = e_i \text{ for } i = 1,\dots,n-m  
\}$.  
The convex hull of this subgroup equals  
 ${\rm conv}(O(n))^{\ell'}$. It  
coincides with the tautological orbitope for $O(m)$.  
\end{proof}  
  
We interpret Theorem \ref{thm:tautoOn} geometrically as  
saying that the tautological orbitope for $O(n)$  
is a continuous analog of the $n$-dimensional cube. Every face of the  
cube is a smaller dimensional cube and the dimension of a face maximizing a  
linear functional $\ell$ is determined by the support of $\ell$.  
The role of the support is now played by the rank of the matrix $B$. This  
behavior yields information about  
 the Carath\'{e}odory number of the tautological orbitope.  
  
\begin{proposition}  
    The Carath\'{e}odory number of the orbitope $\conv( O(n) )$  
    is at most $n+1$.  
\end{proposition}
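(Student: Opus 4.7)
The plan is to reduce the problem to Carathéodory's theorem applied to the cube via singular value decomposition.

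First I would show that any $A\in\conv(O(n))$ has all singular values bounded by $1$. This follows because the operator norm $\|\cdot\|_{op}$ is convex and equal to $1$ on every $X\in O(n)$, so $\|A\|_{op}\le 1$, which means $\sigma_i(A)\le 1$ for all $i$.

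Next, fix a singular value decomposition $A = U\Sigma V^T$ with $U,V\in O(n)$ and $\Sigma = \diag(\sigma_1,\dots,\sigma_n)$ having entries in $[0,1]$. Since $\diag(\Sigma)\in[-1,+1]^n$, and since the cube $[-1,+1]^n$ is an $n$-dimensional polytope with vertex set $\{\pm 1\}^n$, Carathéodory's theorem gives an expression
\[
   \diag(\Sigma) \,=\, \sum_{i=1}^{n+1} \lambda_i\, \epsilon^{(i)}, \qquad \epsilon^{(i)}\in\{\pm 1\}^n,\quad \lambda_i\ge 0,\quad \sum_i \lambda_i = 1.
\]
Writing $D_i := \diag(\epsilon^{(i)})$, each $D_i$ lies in $O(n)$, and $\Sigma = \sum_{i=1}^{n+1}\lambda_i D_i$.

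Finally, applying the isometry $X\mapsto UXV^T$, which carries $O(n)$ to itself and preserves convex combinations, yields
\[
   A \,=\, U\Sigma V^T \,=\, \sum_{i=1}^{n+1}\lambda_i\, (U D_i V^T),
\]
with each $UD_iV^T\in O(n)$. This exhibits $A$ as a convex combination of at most $n+1$ elements of $O(n)$, proving $\cara(\conv(O(n)))\le n+1$.

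The only genuinely nontrivial input is the singular value reduction; the rest is immediate once one recognizes that Lemma~\ref{lem:Ontodiagonal} already says the $\pm 1$ diagonal matrices are enough to realize every point of the cube, and that the $O(n)$-bi-action lets us move an arbitrary $A$ onto its diagonal form without changing the convex-combinatorial data. I do not anticipate a genuine obstacle here; the same argument would, incidentally, tend to show tightness when $A$ is generic (e.g.\ has no zero singular values and $\diag(\Sigma)$ is in general position in the cube).
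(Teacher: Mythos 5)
Your proof is correct, and it takes a genuinely different route from the paper. The paper invokes \cite[Lemma~3.2]{LSS08}, which bounds $\cara(K)$ by one plus the maximum Carath\'eodory number of a proper face, and then combines this with Theorem~\ref{thm:tautoOn} (every proper face of $\conv(O(n))$ is a copy of $\conv(O(m))$ with $m<n$) to run an induction on $n$. You instead argue directly: operator-norm convexity puts the singular values of $A$ in $[0,1]$, an SVD reduces to the diagonal case, classical Carath\'eodory in the $n$-dimensional cube $[-1,+1]^n$ writes $\diag(\Sigma)$ as a convex combination of at most $n+1$ sign vectors, and the bi-orthogonal action $X\mapsto UXV^T$ transports this back to $A$. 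Your argument is more elementary and self-contained (it needs neither the face-structure theorem nor the external lemma), and as a byproduct it proves that every matrix of operator norm at most $1$ lies in $\conv(O(n))$ --- i.e., it re-derives the inclusion underlying the later corollary identifying $\conv(O(n))$ with the operator norm ball; the paper's route, by contrast, exposes the recursive ``continuous cube'' structure that motivates the whole section. One small caution: your closing remark that the same argument ``would tend to show tightness'' for generic $A$ does not follow --- exhibiting one representation with $n+1$ terms never rules out a shorter one, so a matching lower bound would require a separate argument; since you flag it as an aside, this does not affect the proof.
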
  
\begin{proof}  
    By  \cite[Lemma~3.2]{LSS08}, the Carath\'{e}odory number of a  
    convex body $K$ is bounded via  
    \[  
        \cara(K)\,\, \le \,\,  
1 + \max\{ \cara(F) : F \subset K \text{ a proper face }  
        \}.  
    \]  
    Since every proper face is isomorphic to $\conv(O(k))$ for some  
    $k < n$ the result follows by induction on $n$. The base case is $n = 1$  
    for  which $\conv(O(1))$ is a $1$-simplex.  
\end{proof}

Note that the orbit $O(n) \cdot {\rm Id}_n$ coincides  
with the orbit of the identity matrix ${\rm Id}_n$ under the action  
of the product group $O(n) \times O(n)$ by both right and left multiplication.  
Hence the tautological orbitope ${\rm conv}(O(n))$ is also an   
$O(n) \times O(n)$-orbitope for that action.  
We shall now digress and study these orbitopes in general.  
After we have seen (in Theorem  \ref{thm:OnOnspec})  
 that these are spectrahedra,  
we shall resume our discussion of ${\rm conv}(O(n))$.  
  
\subsection{Fan orbitopes} The group $G  
= O(n) \times O(n)$ acts on $\R^{n \times n}$ by simultaneous left and right  
translation. The action is given,   
for $(g,h) \in O(n) \times O(n)$ and $A \in \R^{n \times n}$, by  
\begin{equation}  
\label{eq:fanact}  
    (g,h) \cdot A \,\,:=\,\, g A h^T.  
\end{equation}  
Ky Fan  proved in \cite{Fan51} that the Schur-Horn theorem  for   
symmetric matrices under conjugation by $O(n)$    
generalizes to arbitrary square matrices   
under this $O(n)\times O(n)$ action.   
Now, singular values play the role of the eigenvalues.  
The following is a convex geometric reformulation:  
  
\begin{lemma}[Ky Fan \cite{Fan51}] \label{lem:Fan}  
    For a square matrix $A \in \R^{n \times n}$ let   
$\calO_A$ denote its orbitope under the action (\ref{eq:fanact})  
 of the group $O(n)  
    \times O(n)$. Then the image $\diag(\calO_A)$ of the projection to the  
    diagonal is the $B_n$-permutahedron with respect to the singular values  
    $\s(A)$.  
\end{lemma}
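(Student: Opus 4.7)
The plan is to mirror the strategy used for the symmetric Schur-Horn theorem, with singular values and the Coxeter group $B_n$ replacing eigenvalues and $\mathfrak{S}_n$. First, fix a singular value decomposition $A = U\Sigma V^T$ with $U,V \in O(n)$ and $\Sigma = \diag(\s(A))$. Since the orbit of $A$ under the left-right action (\ref{eq:fanact}) is invariant under pre- and post-multiplication by orthogonal matrices, $\calO_A = \calO_\Sigma$, and it suffices to treat the case $A = \Sigma$.

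For the inclusion $\diag(\calO_\Sigma) \subseteq \BPi(\s(A))$, I would use the facet description (\ref{eq:Bpermineq}): a point $q \in \R^n$ lies in $\BPi(\s(A))$ if and only if for every pair of disjoint subsets $I, J \subseteq [n]$,
\[
\sum_{i \in I} q_i - \sum_{j \in J} q_j \;\le\; \sum_{i=1}^{|I \cup J|} \s_i(A).
\]
For a point $q = \diag(g \Sigma h^T)$ with $g, h \in O(n)$, the left side equals $\Tr(D g \Sigma h^T) = \Tr(\Sigma h^T D g)$, where $D$ is the diagonal matrix with $+1$ at positions in $I$, $-1$ at positions in $J$, and $0$ elsewhere. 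The matrix $h^T D g$ has the same singular values as $D$, namely $1$ with multiplicity $|I \cup J|$ and $0$ otherwise. By von Neumann's trace inequality, $\Tr(\Sigma h^T D g) \le \sum_i \s_i(\Sigma)\, \s_i(h^T D g) = \sum_{i=1}^{|I\cup J|} \s_i(A)$, giving the required bound.

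For the reverse inclusion, it suffices to exhibit the vertices of $\BPi(\s(A))$ inside $\diag(\calO_\Sigma)$, since the diagonal map is linear and $\BPi(\s(A))$ is the convex hull of its vertices. Those vertices are the signed permutations $(\epsilon_1 \s_{\pi(1)}, \dots, \epsilon_n \s_{\pi(n)})$ with $\epsilon_i \in \{\pm 1\}$ and $\pi \in \mathfrak{S}_n$. Taking $h$ to be the permutation matrix $P_\pi$ and $g = E P_\pi$ with $E = \diag(\epsilon_1, \dots, \epsilon_n)$, both elements of $O(n)$, a direct computation shows that $g \Sigma h^T = E P_\pi \Sigma P_\pi^T$ is diagonal with precisely the desired signed permutation on its diagonal.

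The main obstacle is the forward inclusion, which rests on von Neumann's trace inequality; this is classical but nontrivial, and is essentially the content of Ky Fan's original dominance theorem in \cite{Fan51}. An alternative route is to fix $g, h \in O(n)$ and a sign pattern $D$ as above and extremize $\Tr(Dg\Sigma h^T)$ directly over $O(n) \times O(n)$, which reduces (via compactness and Lagrange multipliers) to the observation that the maximum is attained when $h^T D g$ is aligned with $\Sigma$ on its top $|I \cup J|$ singular directions.
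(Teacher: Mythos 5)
Your proof is correct. Note that the paper does not prove this lemma at all: it is stated as Ky Fan's theorem with a citation to \cite{Fan51}, so there is no in-paper argument to compare against. Your write-up supplies exactly the missing details, and it parallels the structure the paper uses for the symmetric Schur--Horn orbitope (Corollary \ref{cor:SH1} and the inequality description \eqref{eq:permineq}), with singular values, the facet description \eqref{eq:Bpermineq}, and signed permutation matrices replacing eigenvalues, \eqref{eq:permineq}, and permutation matrices. The reduction to $A=\Sigma$ via the SVD is valid because $gAh^T=(gU)\Sigma(hV)^T$, so the two orbits coincide. The forward inclusion is correctly reduced to $\Tr(\Sigma\, h^TDg)\le\sum_{i=1}^{|I\cup J|}\s_i(A)$ via von Neumann's trace inequality; you verify the facet inequalities only on the orbit itself, but since they are linear in $q$ and $\diag$ is linear, they persist on the convex hull, so this is a harmless omission. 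The reverse inclusion via $g=EP_\pi$, $h=P_\pi$ produces every vertex of $\BPi(\s(A))$ inside the convex set $\diag(\calO_\Sigma)$, which suffices. The only external input is von Neumann's inequality (equivalently, Fan's maximum principle for $\sum_{i\le k}\s_i$), which you correctly flag as the genuinely nontrivial classical ingredient; everything else is elementary.
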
  
  
We shall refer to  $\,\calO_A = \conv \{ (g,h)\cdot A  
    : g,h \in O(n) \}\,$ as the {\em Fan orbitope} of the matrix $A$.  
From Lemma \ref{lem:Fan},  
one easily deduces the analogous results to Theorems \ref{thm:facelatticeSH}  
and \ref{thm:facelatticeSkewSH}.

\begin{remark}  
    The facial structure of the Fan orbitope $\calO_A$  
is determined by the facial structure of the  
 $B_n$-permutahedron  $\BPi(\s(A))$ specified by  
 the singular values of the matrix~$A$.  
\end{remark}  
  
The description of the $B_n$-permutahedron  
in terms of weak majorization was stated in~\eqref{eq:Bpermineq}.  
Rephrasing these same linear inequalities for the   
singular values, and using  Lemma~\ref{lem:Fan},  
now leads to a spectrahedral   
description of the Fan orbitopes.  
For that we make use of the alternative characterization of   
singular values as the square roots of the (non-negative) eigenvalues of $AA^T$.  
Using Schur complements, it can be seen that the   
$2n \times 2n$-matrix  
\[  
    S(A) \,\,= \,\,\left( \begin{array}{ll}  
       0 & A \\  
        A^T & 0\\  
    \end{array}\right)  
\]  
has the eigenvalues $\pm \s_1(A), \dots, \pm \s_n(A)$.    
We form its additive compound matrices as before.  
  
\begin{theorem} \label{thm:OnOnspec}  
    Let $A$ be a real $n {\times} n$-matrix   
 with singular values $\s_1 \ge \s_2 \ge \cdots \ge  
    \s_n$.  
    Then its Fan orbitope $\calO_A$ equals the spectrahedron  
$$ \calO_A \,\, =\,\,\bigl\{    X \in \R^{n \times n}\,:\,  
 \Sfunc_k( S(X) )\,  \preceq \,   
(\s_1 + \cdots + \s_k) {\rm Id}_{\tbinom{2n}{k}}  
\quad k=1,\dotsc,n\,\bigr\}. $$  
\end{theorem}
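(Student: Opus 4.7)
The plan is to combine Ky Fan's characterization of the Fan orbitope (Lemma~\ref{lem:Fan}) with the eigenvalue description of additive compound matrices (Lemma~\ref{lem:compoundeigen}). The first step is to translate membership in $\calO_A$ into an explicit system of scalar inequalities. Since $\calO_A$ is invariant under the $O(n)\times O(n)$-action, a singular value decomposition $X = U\Sigma_X V^T$ shows that $X \in \calO_A$ if and only if the diagonal matrix $\Sigma_X$ of singular values lies in $\calO_A$. The vertices of the $B_n$-permutahedron $\BPi(\sigma(A))$ are signed permutations of $\sigma(A)$, and each such vertex is realized as a diagonal matrix of the form $P\,\Sigma_A$ for suitable signed permutation matrices $P \in O(n)$, showing that each vertex lies in $\calO_A$. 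Convex combinations therefore yield
\[
    X \,\in\, \calO_A \quad\Longleftrightarrow\quad \sigma(X) \,\in\, \BPi(\sigma(A)).
\]
Because $\sigma(X)$ is non-negative and sorted, the inequalities~(\ref{eq:Bpermineq}) defining $\BPi(\sigma(A))$ collapse to the Ky Fan singular value inequalities
\begin{equation}
\label{eq:kyfansingular}
    \sigma_1(X) + \sigma_2(X) + \cdots + \sigma_k(X) \,\,\leq\,\, \sigma_1 + \sigma_2 + \cdots + \sigma_k, \qquad k = 1, \dots, n.
\end{equation}

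The second step is to reformulate~(\ref{eq:kyfansingular}) as the asserted linear matrix inequalities. The symmetric $2n\times 2n$-matrix $S(X)$ has eigenvalues $\pm\sigma_1(X),\dots,\pm\sigma_n(X)$, so by Lemma~\ref{lem:compoundeigen} the eigenvalues of $\Sfunc_k(S(X))$ are all $k$-element subset sums of this multiset. The largest such sum is obtained by selecting the $k$ largest positive entries, namely $\sigma_1(X),\dots,\sigma_k(X)$, giving
\[
    \lambda_{\max}\bigl(\Sfunc_k(S(X))\bigr) \,=\, \sigma_1(X) + \sigma_2(X) + \cdots + \sigma_k(X).
\]
Thus the spectrahedral condition $\Sfunc_k(S(X)) \preceq (\sigma_1 + \cdots + \sigma_k)\mathrm{Id}$ is precisely the $k$-th inequality in~(\ref{eq:kyfansingular}). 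Running this equivalence over all $k=1,\dots,n$ completes the argument.

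The main obstacle is the reverse containment in the first step: showing that every diagonal matrix whose singular value vector lies in $\BPi(\sigma(A))$ actually belongs to $\calO_A$. While Lemma~\ref{lem:Fan} provides the forward inclusion $\diag(\calO_A) \subseteq \BPi(\sigma(A))$ for free, the reverse requires an explicit convex-combination representation. The key observation is that each vertex of $\BPi(\sigma(A))$ lifts to a signed-permutation image of the diagonal SVD form $\Sigma_A$ of $A$, which itself lies in $\calO_A$; convexity then transports this to the whole polytope. Once this combinatorial lifting and $O(n)\times O(n)$-invariance are in place, the remainder is a direct spectral computation.
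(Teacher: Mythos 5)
Your proof is correct and follows essentially the same route the paper intends: Ky Fan's Lemma~\ref{lem:Fan} reduces membership in $\calO_A$ to weak majorization of singular values, the sortedness and non-negativity of $\sigma(X)$ collapse~(\ref{eq:Bpermineq}) to the Ky Fan inequalities, and Lemma~\ref{lem:compoundeigen} applied to $S(X)$ turns these into the stated linear matrix inequalities; you also correctly identify and fill in the reverse inclusion (lifting vertices of $\BPi(\sigma(A))$ to diagonal matrices in the orbit) that the paper leaves implicit. One cosmetic slip: a vertex of $\BPi(\sigma(A))$ is realized as $D P \Sigma_A P^T$ for a diagonal sign matrix $D$ and a permutation matrix $P$ (two-sided multiplication), not as $P\Sigma_A$ for a single signed permutation matrix $P$, which is generally not diagonal.
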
  
  
Fix an integer $p \in \{1,2,\ldots,n\}$. The  
 \emph{Ky Fan $p$-norm} is defined by  
\[  
    X \,\mapsto\, \sum_{i=1}^p \s_i(X).  
\]  
This function is indeed a norm on  $\R^{n \times n}$, and its  
unit ball is the Fan orbitope $\calO_A$  where  
$A$ is the diagonal matrix  
with $p$ diagonal entries $1/p$ and $n-p$ diagonal entries $0$.  
Theorem \ref{thm:OnOnspec} shows that the unit ball in the   
Ky Fan $p$-norm is a spectrahedron.  
Two norms of special interest in applied linear algebra  
are  the \emph{operator norm} $\| A \| :=  
\s_1(A)$ and  the \emph{nuclear norm}   
$\|A\|_{*} := \s_1(A) + \cdots + \s_n(A)$.   
Indeed, these two norms played the key role in  
work of Fazel, Recht and Parrilo  \cite{Fazel07}  
on compressed sensing in the matrix setting.  
Both the operator norm and the nuclear norm are closely tied  
to their vector counterparts.   
  
\begin{remark}  
    The unit balls in the operator and nuclear norm are both Fan orbitopes.  
    The projection to the diagonal yields the unit balls for the $\ell_\infty$  
    and the $\ell_1$-norm on $\R^n$. These two unit balls are polytopes  
in $\R^n$,  
namely, the $n$-cube and the $n$-crosspolytope respectively.  
\end{remark}  
  
Fazel {\it et al.} showed in \cite[Prop.~2.1]{Fazel07}  that the nuclear norm  
ball has the structure of a spectrahedron, and semidefinite programming duality  
allows for linear optimization  over the operator norm ball.  
The spectrahedral descriptions of both unit balls  
in $\R^{n \times n}$ are special instances of Theorem \ref{thm:OnOnspec},  
to be stated  explicitly once more.  
The operator norm ball consists of all matrices $X$   
whose largest singular value is at most $1$. This is equivalent to  
\begin{equation}  
\label{eq:Onspec}  
    \left(  
    \begin{array}{ll}  
        {\rm Id}_n & X \\  
        X^T & {\rm Id}_n \\  
    \end{array}  
    \right )  
    \succeq 0.  
\end{equation}  
The nuclear norm ball consists of all matrices $X$ for which the sum of the  
singular values is at most $1$. This is equivalent to saying that  
 the sum of the largest $n$ eigenvalues  
of the symmetric $\tbinom{2n}{n}\times\tbinom{2n}{n}$ matrix  
$ \Sfunc_n( S(X) )$  is at most $1$. Hence the nuclear norm ball  
in $\R^{n \times n}$ is the spectrahedron defined by the  
linear matrix inequality  
\begin{equation}  
\label{eq:Onspec2}  
   {\rm Id}_{\tbinom{2n}{n}} - \Sfunc_n( S(X) )   
   \, \succeq \,0.  
\end{equation}  
  
We are now finally prepared to return to the main aim of this section,  
which is the study of tautological orbitopes.  
The proof of Theorem \ref{thm:tautoOn} implies that   
the convex hull of $O(n)$ equals the  
set of $n {\times}n$-matrices whose largest singular value is at most $1$.  
In other words:  
  
\begin{corollary}  
The operator norm ball in $\R^{n \times n}$ is equal to the  
tautological orbitope of the orthogonal group $O(n)$.  
The coorbitope  ${\rm conv}(O(n))^\circ$ is the  
nuclear norm ball. Both of these convex bodies are  
spectrahedra. They are characterized in (\ref{eq:Onspec})  
and (\ref{eq:Onspec2}) respectively.  
\end{corollary}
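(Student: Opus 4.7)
The statement decomposes into three claims: (i) $\conv(O(n))$ equals the operator norm ball; (ii) its polar $\conv(O(n))^\circ$ equals the nuclear norm ball; and (iii) both are spectrahedra cut out by \eqref{eq:Onspec} and \eqref{eq:Onspec2}, respectively. I would treat them in that order.

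For (i), first observe that the tautological orbitope coincides with the Fan orbitope $\calO_{{\rm Id}_n}$, since $(g,h)\mapsto gh^T$ sends $O(n)\times O(n)$ surjectively onto $O(n)$, so $\{g\,{\rm Id}_n\,h^T : g,h\in O(n)\} = O(n)$. The identity matrix has all singular values equal to $1$, so Theorem~\ref{thm:OnOnspec} describes $\conv(O(n))$ as the set of $X$ satisfying $\Sfunc_k(S(X)) \preceq k\cdot {\rm Id}_{\binom{2n}{k}}$ for $k=1,\dots,n$. The $k=1$ inequality $S(X)\preceq {\rm Id}_{2n}$ is, by a Schur complement on either diagonal block of ${\rm Id}_{2n}-S(X)$, equivalent to ${\rm Id}_n-X^TX \succeq 0$, hence to $\s_1(X)\le 1$, i.e.\ the operator norm bound. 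This step also yields \eqref{eq:Onspec} after the sign change $X\mapsto -X$, which leaves the condition invariant. Conversely, $\s_1(X)\le 1$ forces $\s_1(X)+\cdots+\s_k(X)\le k$ for every $k$; and because the eigenvalues of $S(X)$ are $\pm\s_1(X),\dots,\pm\s_n(X)$, the largest eigenvalue of the additive compound $\Sfunc_k(S(X))$ is exactly $\s_1(X)+\cdots+\s_k(X)$. Thus the $k=1$ inequality already implies all the others, establishing (i) and the spectrahedral description \eqref{eq:Onspec} simultaneously.

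For (ii) and the second half of (iii), I would invoke the Frobenius-pairing duality $\|X\|_* = \sup\{\Tr(XY^T) : \|Y\|\le 1\}$ between the operator and nuclear norms. This duality exhibits the nuclear norm as the support function of the operator norm ball, so the polar of $\conv(O(n))$ is precisely the unit ball of $\|\cdot\|_*$. For its spectrahedral description, use the same spectral identity as before: $\s_1(X)+\cdots+\s_n(X)$ is the sum of the $n$ largest eigenvalues of $S(X)$, which equals the largest eigenvalue of $\Sfunc_n(S(X))$. Hence $\|X\|_*\le 1$ is equivalent to ${\rm Id}_{\binom{2n}{n}}-\Sfunc_n(S(X)) \succeq 0$, which is exactly \eqref{eq:Onspec2}.

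No step is particularly hard. The real content is the collapse of the tower of $n$ spectrahedral inequalities from Theorem~\ref{thm:OnOnspec} to a single Schur-complement condition in the special case $A = {\rm Id}_n$, together with the classical Frobenius-inner-product duality between operator and nuclear norms. Everything else is routine translation between singular-value inequalities and spectra of additive compound matrices.
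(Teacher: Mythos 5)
Your proof is correct and follows essentially the same route as the paper: the corollary is read off from Theorem~\ref{thm:OnOnspec} applied to the Fan orbitope of ${\rm Id}_n$ together with the classical duality between the operator and nuclear norms. The only cosmetic difference is that the paper deduces $\conv(O(n))=\{X:\s_1(X)\le 1\}$ from the proof of Theorem~\ref{thm:tautoOn}, whereas you obtain it by collapsing the tower of additive-compound inequalities to the single $k=1$ Schur-complement condition; both arguments live inside the same singular-value framework.
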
  
  
It would be worthwhile to explore  
implications of our geometric explorations of orbitopes  
for algorithmic applications in the sciences and engineering,  
such as those proposed in~\cite{Fazel07}.  
  
\subsection{The special orthogonal group}  
  
We next discuss the faces  
of the tautological orbitope of the group $SO(n)$.  
The relevant convex polytope is now  
the convex hull  $\hcube_n$ of all vertices $v \in \{-1, +1\}^n$   
with an even number of $(-1)$-entries.  This polytope is known as the  
\emph{demicube} or \emph{halfcube}. It is a permutahedron for the Coxeter  
group of type $D_n$, i.e.~signed permutations with an even number of sign changes.    
The facet hyperplanes of  $\hcube_n$ are derived by  
separating infeasible vertices of $[-1,+1]^n$ by hyperplanes through the   
$n$ neighboring vertices:  
\begin{equation}  
\label{eq:SOnfacets}  
\hcube_n \,\, = \,\,\bigl\{ x \in [-1,+1]^n \,:\,  
    \sum_{i \not\in J} x_i - \sum_{i \in J} x_i \le n-2  
\,\,\,\, \hbox{for all $J \subseteq [n]$ of odd cardinality} \bigr\}.  
\end{equation}  
While  $\hcube_2$ is only a line segment,   
we have ${\rm dim}(\hcube_n) = n$ for $n \geq 3$.  
For example, the  
halfcube $\hcube_3$ is the tetrahedron  
with vertices $(1,1,1)$, $(-1,-1,1)$,  
$(-1,1,-1)$ and $(1,-1,-1)$. Note that its facet inequalities   
appear as the diagonal entries in the symmetric  
$4 {\times} 4$-matrix (\ref{eq:magic}):  
$$  
\hcube_3 \,=\,  
\bigl\{ \,x\in \R^3\,:\  
{\rm min} (  
1+x_1 + x_2 + x_3 ,\,  
1+x_1 - x_2 - x_3 ,\,  
1 - x_1 + x_2 - x_3 ,\,  
1 - x_1 - x_2 + x_3 ) \geq 0 \,\bigr\}.  
$$  
This observation is explained by results of Horn (cf.~\cite{Leite99}) on the  
diagonals of special orthogonal matrices. These imply the  
following lemma about the tautological orbitope of $SO(n)$:  
  
\begin{lemma}   
The projection of  
 $\,{\rm conv}(SO(n))$ onto the diagonal equals  
the halfcube $\hcube_n$.  
\end{lemma}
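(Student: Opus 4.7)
The plan is to establish the two inclusions separately, using only the facet description \eqref{eq:SOnfacets} for $\hcube_n$ and standard facts about spectra of orthogonal matrices.

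For the inclusion $\diag(\mathrm{conv}(SO(n)))\subseteq \hcube_n$, by linearity and convexity it suffices to show that for every $X\in SO(n)$ the diagonal vector $x=\diag(X)$ satisfies the inequalities of \eqref{eq:SOnfacets}. The box constraints $-1\le x_i\le 1$ are immediate since the columns of $X$ are unit vectors. For a subset $J\subseteq[n]$ of odd cardinality, let $D_J=\mathrm{diag}(\epsilon_1,\ldots,\epsilon_n)$ with $\epsilon_i=-1$ for $i\in J$ and $+1$ otherwise; then $D_J\in O(n)$ with $\det(D_J)=-1$, so $D_JX\in O(n)$ and $\det(D_JX)=-1$. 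A direct computation gives
\[
\Tr(D_JX) \ = \ \sum_{i\notin J}x_i \,-\,\sum_{i\in J}x_i.
\]
So the facet inequality amounts to the claim that every $M\in O(n)$ with $\det(M)=-1$ satisfies $\Tr(M)\le n-2$. For this I would use the spectral description of an orthogonal matrix: its eigenvalues lie on the unit circle and consist of complex conjugate pairs $e^{\pm i\theta_j}$, together with real eigenvalues $\pm 1$ of multiplicities $n_+$ and $n_-$. Since the product of all eigenvalues equals $\det(M)=-1$, the multiplicity $n_-$ must be odd and hence at least $1$. Writing $n=2p+n_++n_-$, the trace is
\[
\Tr(M) \ = \ 2\sum_{j=1}^p\cos\theta_j \,+\, n_+ \,-\, n_- \ \le \ 2p+n_+-n_- \ = \ n-2n_- \ \le \ n-2,
\]
which is the desired bound.

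For the reverse inclusion $\hcube_n\subseteq\diag(\mathrm{conv}(SO(n)))$, note that $\hcube_n$ is by definition the convex hull of the $\pm 1$-vectors with an even number of $-1$ entries. For such a vertex $v$, the diagonal matrix $D=\mathrm{diag}(v)$ is orthogonal and has determinant $\prod_i v_i = +1$, so $D\in SO(n)$ and $v=\diag(D)\in\diag(SO(n))$. Taking convex combinations yields the inclusion.

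The only genuinely substantive step is the trace bound $\Tr(M)\le n-2$ for $M\in O(n)$ with $\det(M)=-1$; everything else is bookkeeping with the orbit $SO(n)\cdot\{D_J:|J|\text{ even}\}$ and the facet description \eqref{eq:SOnfacets}. I expect this spectral lemma to be the main obstacle, though it is a standard consequence of the normal form of real orthogonal matrices.
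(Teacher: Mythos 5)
Your proof is correct and follows essentially the same route as the paper: the vertices of $\hcube_n$ are diagonals of matrices in $SO(n)$ for one inclusion, and for the other the facet inequalities \eqref{eq:SOnfacets} are obtained by pairing $g\in SO(n)$ with the sign matrices $D_J$ for odd $|J|$ and invoking the bound $\Tr(M)\le n-2$ for $M\in O(n)\setminus SO(n)$. The only difference is that the paper cites this trace bound as a known linear-algebra fact (attributed to Horn), whereas you supply its short spectral proof, which is a welcome addition.
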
  
  
\begin{proof}  
Just like in Lemma \ref{lem:Ontodiagonal}, it is clear that  
 $\hcube_n$ is a subset of $ \diag({\rm conv}(SO(n))$.  
The converse is derived from the linear algebra fact that the  
trace of any matrix in $O(n)\backslash SO(n)$ is at most $n-2$.  
For  $J \subseteq [n]$ let $R_J$ be the diagonal matrix with   
$(R_J)_{ii} = -1$ if $i \in J$ and  
$(R_J)_{ii} = 1$ if $i \not\in J$.   
Let $g \in SO(n)$. Then  
${\rm trace}(g \cdot R_J) \leq n-2$ for all $J$  
of odd cardinality. This means that  
$\diag(g)$ satisfies the linear inequalities in  
(\ref{eq:SOnfacets}) and hence lies in $\hcube_n$.  
\end{proof}  
  
There is a variant of singular value decomposition with respect to the  
restricted class of orientation preserving transformations. For every   
matrix $A \in \R^{n \times n}$ there exist rotations $U,V \in SO(n)$ such that  
$UAV$ is diagonal. The diagonal entries are called the \emph{special singular  
values} and denoted by $\ts(A) = (\ts_1(A) \ge \cdots \ge \ts_n(A))$. The main  
difference to the usual singular values is that $\ts_n(A)$ may be negative;  
only the first $n-1$ entries of $\ts(A)$ are non-negative.  
We need to make this distinction in   
order to understand the faces of ${\rm conv}(SO(n))$.

\begin{theorem} \label{thm:SOntauto}  
    The tautological orbitope of $SO(n)$ has precisely two orbits of  
    facets. These are the tautological orbitopes for $SO(n-1)$  
and the free spectrahedra of dimension $\tbinom{n}{2}-1$.  
\end{theorem}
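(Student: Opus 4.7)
The plan is to classify the facets of $\conv(SO(n))$ via the diagonal projection $\diag\colon \conv(SO(n)) \to \hcube_n$, paralleling the Schur-Horn and Ky Fan analyses. Every exposed face has the form $F_B = \{X \in \conv(SO(n)) : \Tr(BX) = \max\}$ for some $B \in \R^{n \times n}$. First I would apply the special singular value decomposition to write $B = U \Sigma V$ with $U, V \in SO(n)$ and $\Sigma = \diag(\sigma_1, \ldots, \sigma_n)$ satisfying $\sigma_1 \geq \cdots \geq \sigma_{n-1} \geq |\sigma_n|$ and $\mathrm{sign}(\sigma_1 \cdots \sigma_n) = \mathrm{sign}(\det B)$. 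Since $\conv(SO(n))$ is invariant under both left and right multiplication by $SO(n)$, this reduces the classification of facets up to $SO(n) \times SO(n)$-orbit to the case $B = \Sigma$. By the preceding projection lemma, $\max_{R \in SO(n)} \Tr(\Sigma R) = \max_{x \in \hcube_n} \langle \sigma, x \rangle$, so each facet pulls back from a facet of $\hcube_n$; the inequalities (\ref{eq:SOnfacets}) exhibit exactly two orbits of facets of $\hcube_n$ (for $n \geq 4$) under the Weyl group of type $D_n$, namely the cube-type $\{x_i = \pm 1\}$ and the simplex-type $\{\sum_{i \notin J} x_i - \sum_{i \in J} x_i = n - 2\}$ with $|J|$ odd.

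For the cube-type representative $\sigma = e_1$, the argmax set $\{R \in SO(n) : R_{11} = 1\}$ forces the first column of $R$ to be $e_1$ by the unit-norm condition, and orthogonality then forces the first row to be $e_1^T$. The remaining $(n-1) \times (n-1)$ block ranges over all of $SO(n-1)$, so the convex hull of this argmax set is exactly the tautological orbitope $\conv(SO(n-1))$, giving the first family of facets.

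For the simplex-type representative $\sigma = (1, \ldots, 1, -1)$, I take $B = \Sigma = \diag(1, \ldots, 1, -1)$ with $\det B = -1$ and substitute $M := BR$, so that the condition $\Tr(\Sigma R) = n - 2$ becomes $M \in O(n) \setminus SO(n)$ with $\Tr(M) = n - 2$. The key step---and the main obstacle---is the eigenvalue classification of such $M$: its eigenvalues lie on the unit circle in complex conjugate pairs together with real $\pm 1$'s, have product $\det M = -1$ (so the number $r_-$ of $-1$ eigenvalues is odd), and satisfy
\[
\Tr(M) \,=\, r_+ - r_- + 2\sum_k \cos \theta_k \,\leq\, n - 2r_-.
\]
Since $r_-$ is odd with $r_- \geq 1$, equality $\Tr(M) = n - 2$ forces $r_- = 1$ and $\cos \theta_k = 1$ for every complex conjugate pair. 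Hence the spectrum of $M$ is $(1, \ldots, 1, -1)$, so that $M = \mathrm{Id}_n - 2 v v^T$ is a rank-one reflection across a unit vector $v \in S^{n-1}$, and $R = \Sigma - 2 \Sigma v v^T$. Taking convex hulls over $v$, the face becomes the affine image of $\conv\{vv^T : v \in S^{n-1}\} = \mathrm{PSD}_n \cap \{\Tr = 1\}$ under the injective affine map $vv^T \mapsto \Sigma - 2 \Sigma v v^T$, identifying the simplex-type facet with a free spectrahedron as in Example~\ref{ex:freespec}. Together with the cube-type analysis, this establishes exactly the two orbits of facets claimed.
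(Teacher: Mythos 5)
Your proof is correct and follows essentially the same route as the paper: project to the halfcube $\hcube_n$, identify its two $D_n$-orbits of facets, pull back the cube-type facet to $\conv(SO(n-1))$, and pull back the simplex-type facet to the reflections $\mathrm{Id}_n - 2vv^T$, whose convex hull is a free spectrahedron. You supply somewhat more detail than the paper does (the explicit special-SVD reduction to diagonal $B$ and the eigenvalue inequality forcing the spectrum $(1,\dots,1,-1)$), but the underlying argument is the same.
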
  
   
\begin{proof}  
Up to $D_n$-symmetry, the halfcube   
$\hcube_n$ has only two  
   distinct facets, namely an $(n-1)$-dimensional halfcube and an  
   $(n-1)$-dimensional simplex. A typical halfcube facet $(\hcube_n)^\ell$   
arises by maximizing the linear function $\ell = x_1$ over  
$\hcube_n$, and a typical simplex  
facet $(\hcube_n)^{\ell'}$   
 arises by maximizing the linear function  
$\,\ell'= x_1 + x_2 + \cdots + x_{n-1} - x_n$.  
   Pulling back $\ell$ along the diagonal projection  $\diag$, we  
see that the facet of ${\rm conv}(SO(n))$ corresponding to the  
halfcube facet is  
the convex hull of all rotations $g \in SO(n)$ that fix the  
   first standard basis vector $e_1$.  
   Pulling back $\ell'$ along  $\diag$, we see that the facet of  
 ${\rm conv}(SO(n))$ corresponding to the simplex facet is the convex hull  of  
$\,\bigl\{g \in SO(n) \,:\,\Tr(g \cdot R_{\{n\}}) = n-2 \bigr\}$.  
This facet is isomorphic to the convex  
   hull of all $g^\prime \in O(n) \backslash SO(n)$ such that $\Tr(g^\prime) =  
   n-2$.  Since $g^\prime$ is orientation reversing, one eigenvalue of  
$g^\prime$ is $-1$, and $\Tr(g^\prime) = n-2$ forces all other eigenvalues  
to be equal to $1$. Hence the facet in question is the  
symmetric Schur-Horn orbitope for the diagonal matrix  
$(1,\ldots,1,-1)$. Example \ref{ex:freespec} implies that  
 this is a free spectrahedron.  
\end{proof}

In Subsection 4.1 we exhibited  a spectrahedral  
representation for the tautological orbitope   
${\rm conv}(SO(3))$, and in that case, the  
two facet types of Theorem \ref{thm:SOntauto}  
collapse into one type. At present, we do not  
know how to generalize the representation  
(\ref{eq:magic}) to $SO(n)$ for $n \geq 4$.

\section{Carath\'eodory Orbitopes}   
   
Orbitopes for $SO(2)$ were first  
studied by Carath\'e\-o\-dory \cite{Car}.  
The coorbitope cone (\ref{eq:coorbitopecone})  
dual to such a {\em Carath\'eodory orbitope}   
consists of non-negative trigonometric polynomials.   
This leads to the Toeplitz spectrahedral representation of the universal   
Carath\'eodory orbitope in  
Theorem \ref{Th:toeplitz}, which implies that the  
convex hulls of all trigonometric curves are projections of spectrahedra.  
The universal Carath\'eodory orbitope is also affinely isomorphic to the convex hull of  
the compact even moment curve,  whose coorbitope cone   
consists of non-negative univariate polynomials.    
This leads to the   
 representation by Hankel matrices in Theorem~\ref{Th:Hankel}.

%%%%%%%%%%%%%%%%%%%%%%%%%%%%%%%%%%%%%%%%%%%%%%%%%%%%%%%%%%%%%%%%%%%%%%%%%%%%%   
\subsection{Toeplitz representation} \label{S:Caratheodory}  
   
The irreducible representations $\rho_a$ of $SO(2)$ are indexed by   
non-negative integers $a$. Here,  
$\rho_0$ is the trivial representation.   
When $a \in \N$ is positive, the representation $\rho_a$ of $SO(2)$  
acts on $\R^2$, and it  
sends a rotation matrix to its $a$th power:  
 $$   
  \rho_a \,\,:\,\,   
   \begin{pmatrix}    
    \cos(\theta) &  - \sin(\theta) \, \\   
     \sin(\theta) & \phantom{-}\cos(\theta) \,   
   \end{pmatrix}  
  \,\,\, \mapsto \,\,\,   
   \begin{pmatrix}    
    \phantom{-} \cos(\theta) & - \sin(\theta) \, \\   
      \sin(\theta) & \phantom{-}\cos(\theta) \,   
    \end{pmatrix}^{\! a}  
    \, = \,   
    \begin{pmatrix}    
      \cos( a \theta) & -\sin( a \theta) \, \\   
      \sin(a \theta) & \phantom{-}\cos( a \theta) \,   
    \end{pmatrix} .  
 $$   
For a vector $A = (a_1,a_2,\ldots,a_d) \in \N^d$ we consider   
the direct sum of these representations  
$$  
   \rho_A \,\,\, := \,\,\,    
  \rho_{a_1} \,\oplus \, \rho_{a_2} \,\oplus \,\cdots \, \oplus \,\rho_{a_d}.   
$$   
The {\em Carath\'eodory orbitope}  $\Cara_A$ is the convex hull   
of the orbit $SO(2)\cdot (1,0)^d$ under the action $\rho_A$  
 on the vector space $(\R^2)^d$.  
This orbit is the  {\em trigonometric moment curve}   
\begin{equation}   
 \label{CaraCurve}   
  \Bigl\{ \bigl( \,   
   \cos(a_1 \theta) , \,   
   \sin(a_1 \theta) , \,   
%   \cos(a_2 \theta) , \,   
%   \sin(a_2 \theta) , \, 
    \ldots \,,   
   \cos(a_d \theta) , \,   
   \sin(a_d \theta) \,\bigr) \in \R^{2d} \,: \,   
   \theta \in [0,2\pi) \,   
  \Bigr\}.   
\end{equation}   
This curve is also identified with the matrix group $\,\rho_A(SO(2))$ lying in  
the space $(\R^{2 \times 2})^d$ of block-diagonal $2d \times 2d$-matrices with $d$ blocks of  
size $2 \times 2$.    
Thus $\Cara_A$ is isomorphic to the convex hull of   
$\,\rho_A(SO(2))$, and can therefore also be thought of as   
a tautological orbitope.  
  
We distinguish between isomorphisms of orbitopes that preserve the $SO(2)$-action, and the  
weaker notion of affine isomorphisms that preserve their structure as convex bodies.  
  
%%%%%%%%%%%%%%%%%%%%%%%%%%%%%%%%%%%%%%%%%%%%%%%%%%%%%%%%%%%%  
\begin{lemma}\label{L:all_Cara}  
 Any orbitope of the circle group   
$SO(2)$ is isomorphic to a Carath\'eodory orbitope  
 $\Cara_A$, where  $A\in\N^d$ has distinct coordinates, and it is   
 affinely isomorphic to a  Carath\'eodory orbitope   
where the coordinates of $A$ are  
 relatively prime integers.  
\end{lemma}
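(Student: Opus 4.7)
\bigskip

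\noindent\textbf{Proof proposal.} The plan is to prove the two assertions separately, handling the equivariant isomorphism first and then the affine identification after passing to relatively prime frequencies.

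First I would invoke the representation theory of $SO(2)$ recalled in Section~\ref{sec:Setup}: every finite-dimensional real representation $V$ decomposes as $V = \rho_0^{m_0} \oplus \bigoplus_{a > 0} \rho_a^{m_a}$, and since $\End_{SO(2)}(\rho_a) = \C$ for $a > 0$, each isotypical component $\rho_a^{m_a}$ is naturally a complex vector space of complex dimension $m_a$ on which $SO(2)$ acts by scalar multiplication by $e^{ia\theta}$. As explained in Section~\ref{sec:Setup}, the trivial summand $\rho_0^{m_0}$ contributes only a translation to the orbitope, so after translating the orbit's centroid we may assume $v = \sum_{a > 0} v_a$ with $v_a \in \rho_a^{m_a}$. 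Let $A = (a_1, \ldots, a_d)$ be the distinct positive integers for which $v_{a_j} \neq 0$.

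Next I would reduce the multiplicity in each isotypical component. Because $U(m_{a_j}) \subseteq \End_{SO(2)}(\rho_{a_j}^{m_{a_j}})$ acts transitively on the unit sphere of $\C^{m_{a_j}}$ and commutes with the $SO(2)$-action, an $SO(2)$-equivariant orthogonal transformation can carry $v_{a_j}$ to $(r_j, 0, \ldots, 0) \in \C^{m_{a_j}}$ for $r_j = \|v_{a_j}\| > 0$. This replaces $V$ by $\rho_{a_1} \oplus \cdots \oplus \rho_{a_d}$ and $v$ by $(r_1, 0, r_2, 0, \ldots, r_d, 0)$, while sending $\conv(SO(2)\cdot v)$ isomorphically onto the convex hull of the orbit of this new point. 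Composing with the equivariant linear rescaling $(x,y) \mapsto (x/r_j, y/r_j)$ on each summand $\rho_{a_j}$, the orbit becomes the trigonometric moment curve~(\ref{CaraCurve}), so its convex hull is exactly $\Cara_A$. This proves the first assertion, with $A \in \N^d$ having distinct coordinates.

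Finally, for the affine isomorphism statement, let $g = \gcd(a_1, \ldots, a_d)$ and write $A = g A'$ with $A'$ having relatively prime coordinates. The substitution $\phi = g\theta$ wraps $[0, 2\pi)$ onto itself $g$ times, and since
\[
\bigl(\cos(a_j \theta), \sin(a_j \theta)\bigr) \,=\, \bigl(\cos(a_j' \phi), \sin(a_j' \phi)\bigr),
\]
the curves traced by $\rho_A(SO(2))\cdot (1,0)^d$ and $\rho_{A'}(SO(2))\cdot (1,0)^d$ coincide as subsets of $\R^{2d}$. Hence $\Cara_A = \Cara_{A'}$ as convex bodies (though not as $SO(2)$-orbitopes, which is why one can only assert affine isomorphism in the second clause).

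The main technical subtlety is the multiplicity-reduction step: one must verify that the $U(m_{a_j})$-action commuting with $\rho_{a_j}$ on $\rho_{a_j}^{m_{a_j}}$ is realized by a genuine orthogonal transformation of the underlying real inner product space, so that the induced map on $V$ is a bona fide isomorphism of $SO(2)$-representations. Once $\End_{SO(2)}(\rho_a) = \C$ is in hand, this is immediate, and the rest of the argument is formal.
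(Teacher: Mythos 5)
Your proposal is correct and follows essentially the same route as the paper: reduce to the multiplicity-free module spanned by the orbit using $\End_{SO(2)}(\rho_a)=\C$ (your explicit unitary transformation is just a concrete realization of the paper's restriction to the linear span of the orbit), normalize $v$ by complex rescaling, and obtain the affine identification from the $g$-fold reparametrization $\phi=g\theta$, which is the paper's factorization $\rho_A=\rho_{A'}\circ\rho_g$ written in coordinates. No gaps.
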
  
%%%%%%%%%%%%%%%%%%%%%%%%%%%%%%%%%%%%%%%%%%%%%%%%%%%%%%%%%%%%  
\begin{proof}  
 Let $\calO$ be an orbitope for $SO(2)$.  
 We may assume that its ambient $SO(2)$-module $V$  
 has no trivial components and is the linear span of $\calO$.   
 Then $V$ has the form $\rho_A$ for $A\in \N^d$ with distinct non-zero components, that  
 is, $V$ is multiplicity free.   
 This is because $\End_{SO(2)}(\rho_a)=\C$, where we identify  
 $\R^2$ with $\C$  and $SO(2)$ with the unit circle in $\C$.  
 The orbitope $\calO$ is generated by a vector $v=(v_1,\dotsc,v_d)\in\C^d$   
with non-zero coordinates. By complex rescaling,   
we may assume that $v=(1,\dotsc,1)$, showing that   
 $\calO$ is isomorphic to $\Cara_A$.  
 Lastly, if the coordinates   
of $A=(a_1,\dotsc,a_d)$ have greatest common divisor $a$, then  
 $\rho_A$ is the composition $\rho_{A'}\circ \rho_a$, where $A'=(a_1/a,\dotsc,a_d/a)$ and  $\calO$ is affinely isomorphic to an orbitope for the   
module $\rho_{A'}$.  
\end{proof}  
  
We henceforth assume that $0<a_1<\dotsb<a_d$ where the $a_i$  
 are relatively prime.   
When $A=(1,2,\dotsc,d)$,  Carath\'eodory \cite{Car} studied the facial structure of   
$\Cara_A$. 
An even-dimensional cyclic polytope is the convex hull of finitely many points on  
Carath\'eodory's curve (see~\cite{Ziegler}).  
Smilansky \cite{Smi} studied the four-dimensional Carath\'eodory orbitopes   
($d=2$), and this was recently extended by   
Barvinok and Novik \cite{BN} who studied $\Cara_{(1,3,5,\ldots,2k{-}1)}$.   
The corresponding curve (\ref{CaraCurve}) is the   
{\em symmetric moment curve} which   
gives rise to a remarkable family of centrally symmetric polytopes    
with extremal face numbers.   
Many questions remain about the facial structure of the   
Barvinok-Novik orbitopes  $\Cara_{(1,3,5,\ldots,2k{-}1)}$.  See  
\cite[\S 7.4]{BN} for details.  
   
We now focus on the {\em universal Carath\'eodory orbitope}   
$\Cara_d := \Cara_{(1,2,\dots,d)} $ in $\R^{2d}$.  
This convex body has the following  
 spectrahedral representation in terms of Hermitian Toeplitz matrices.  
  
%%%%%%%%%%%%%%%%%%%%%%%%%%%%%%%%%%%%%%%%%%%%%%%%%%%%%%%  
\begin{theorem}\label{Th:toeplitz}  
    The universal Carath\'eodory orbitope $\Cara_d $ is isomorphic to   
         the spectrahedron  consisting of   
          positive semidefinite Hermitian Toeplitz matrices with ones  
    along the  diagonal:  
\begin{equation}  
\label{toeplitz}  
     \left(   
        \begin{array}{ccccc}   
                1   & x_1   & \cdots  & x_{d-1} & x_d    \\   
            y_1     & 1   & \ddots  &   x_{d-2}      & x_{d-1} \\   
            \vdots  & \ddots  & \ddots  & \ddots    & \vdots \\   
            y_{d-1} &  y_{d-2}       & \ddots  & 1    & x_{1}\\   
            y_d     & y_{d-1} & \cdots  &   y_{1}   & 1  \\   
        \end{array}\right) \,\succeq \, 0  
         \qquad \ \mbox{where}\qquad  
\begin{bmatrix}  
          x_j = c_j +  s_j \cdot i \\  
          y_j  = c_j -  s_j \cdot i \\  
           i \, = \, \sqrt{-1}  
 \end{bmatrix}  
\end{equation}  
\end{theorem}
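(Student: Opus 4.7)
The plan is to identify $\R^{2d}$ with the affine space $H_d$ of Hermitian Toeplitz $(d{+}1){\times}(d{+}1)$ matrices with ones on the diagonal, and to show that under this identification the trigonometric moment curve becomes the set of rank-one elements of the spectrahedron on the right-hand side of~\eqref{toeplitz}. The classical Carath\'eodory--Toeplitz integral representation will then yield the theorem.

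Concretely, I would define the $\R$-linear bijection $\Phi\colon\R^{2d}\to H_d$ sending $(c_1,s_1,\dots,c_d,s_d)$ to the Toeplitz matrix with $1$'s on the diagonal and $c_j+is_j$ on the $j$-th superdiagonal. For the orbit point $(\cos\theta,\sin\theta,\dots,\cos d\theta,\sin d\theta)$, the image under $\Phi$ has $(k,l)$-entry $e^{i(l-k)\theta}$ and therefore factors as $u_\theta u_\theta^*$, where $u_\theta=(1,e^{-i\theta},\dots,e^{-id\theta})^T\in\C^{d+1}$. Each such matrix is rank-one, Hermitian, and positive semidefinite with unit diagonal, so it lies in the spectrahedron of~\eqref{toeplitz}. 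By convexity of the PSD cone, the inclusion $\Phi(\Cara_d)\subseteq\{T\in H_d:T\succeq 0\}$ follows immediately.

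For the reverse inclusion I would invoke the classical theorem of Carath\'eodory and Toeplitz: a Hermitian Toeplitz matrix $T$ is positive semidefinite if and only if it admits a representation $T=\int_0^{2\pi} u_\theta u_\theta^*\,d\mu(\theta)$ for some non-negative Borel measure $\mu$ on $[0,2\pi)$. Requiring $T_{kk}=1$ forces $\mu$ to be a probability measure. Since $\theta\mapsto u_\theta u_\theta^*$ is continuous with compact image equal to $\Phi$ applied to the orbit, and since in finite dimensions the convex hull of a compact set is already compact, such an integral is an honest convex combination of orbit points. Hence $T\in \Phi(\Cara_d)$, completing the isomorphism.

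The main technical obstacle is the Carath\'eodory--Toeplitz representation itself. The cleanest self-contained route combines Hahn--Banach duality with the Fej\'er--Riesz theorem: a linear functional on $H_d$ that is non-negative on every $u_\theta u_\theta^*$ corresponds, under the trace pairing, to a Hermitian Toeplitz matrix $L$ for which the trigonometric polynomial $u_\theta^* L u_\theta$ of degree at most $d$ is non-negative on $[0,2\pi)$. Fej\'er--Riesz writes this polynomial as $|q(e^{i\theta})|^2$ for some polynomial $q$ of degree $\leq d$, forcing $L$ itself to be a Gram matrix and hence PSD. Bipolar duality then identifies the closed convex hull of $\{u_\theta u_\theta^*\}_\theta$ with the cone of PSD Hermitian Toeplitz matrices, and intersecting with the affine hyperplane of unit diagonal gives the desired result.
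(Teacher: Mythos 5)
Your overall strategy --- identify $\R^{2d}$ with the unit-diagonal Hermitian Toeplitz matrices, observe that orbit points become the rank-one matrices $u_\theta u_\theta^*$, and obtain the hard inclusion from duality plus Fej\'er--Riesz --- is essentially the paper's proof in a different packaging. However, your key duality step contains a genuine error. You claim that if $L$ is a Hermitian Toeplitz matrix with $u_\theta^* L u_\theta\geq 0$ for all $\theta$, then the Fej\'er--Riesz factorization $u_\theta^* L u_\theta=|q(e^{i\theta})|^2$ forces $L$ itself to be a Gram matrix, hence PSD. It does not: since $u_\theta u_\theta^*$ is Toeplitz, the function $\theta\mapsto u_\theta^* L u_\theta$ only sees the sums of $L$ along its diagonals, so the factorization only shows that $L$ has the same diagonal sums as the rank-one matrix $\overline{q}q^T$; such an $L$ need not be PSD. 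For $d=2$ take $q(z)=1+z^2$, so $u_\theta^*Lu_\theta=|1+e^{2i\theta}|^2\geq 0$ is satisfied by
\[
 L\ =\ \begin{pmatrix} 2/3 & 0 & 1\\ 0 & 2/3 & 0\\ 1 & 0 & 2/3\end{pmatrix},
\]
which is not positive semidefinite (the principal $2\times 2$ minor on rows and columns $1,3$ is negative). Consequently the dual cone of the trigonometric moment cone, inside the space of Hermitian Toeplitz matrices, is the strictly larger cone of Toeplitz matrices with non-negative symbol, not the PSD Toeplitz cone; the PSD Toeplitz cone is not self-dual in that space, and your bipolar step as written does not identify the moment cone with the PSD Toeplitz cone.

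The repair is exactly the computation the paper carries out. Because $T$ is Toeplitz, the pairing $\Tr(TL)$ depends only on the diagonal sums of $L$, so for $L$ in the dual cone the Fej\'er--Riesz factorization gives $\Tr(TL)=\Tr(T\,\overline{q}q^T)=q^T T\,\overline{q}$; hence every PSD Toeplitz $T$ pairs non-negatively with the entire dual cone. Conversely, for each $h\in\C^{d+1}$ the Toeplitz averaging of $\overline{h}h^T$ lies in the dual cone (its symbol is $|h^T u_\theta|^2$), and pairing $T$ against it yields $h^T T\,\overline{h}\geq 0$ for every $h$, i.e.\ $T\succeq 0$. With this correction your bipolar argument closes, and the remaining parts of your proposal --- the easy inclusion, and passing from the integral representation with a probability measure to a finite convex combination of orbit points --- are fine.
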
   
  
\smallskip  
  
We note that the complex spectrahedron (\ref{toeplitz})  
can be translated into a spectrahedron over $\R$  as follows.  
Consider a Hermitian matrix $\,H = F +   G \cdot i\,$ where $F$  
is real symmetric and $G$ is real skew-symmetric.  
The Hermitian matrix $H$ is positive  
definite if and only if  
\[   
    \left(\begin{array}{rr}   
        F & -G \\   
        G & F \\   
    \end{array}   
    \right)  \succeq 0.   
\]

A {\em trigonometric curve} is any curve in $\R^n$  
that is parametrized by polynomials in   
the trigonometric functions sine and cosine, or equivalently,  
any curve that is the image under a linear map  
 of the universal trigonometric moment   
curve~\eqref{CaraCurve} where $A=(1,2,\dotsc,d)$.  
  
%%%%%%%%%%%%%%%%%%%%%%%%%%%%%%%%%%%%%%%%%%%%%%%%%%%%%%%  
\begin{corollary}[cf.~Henrion \cite{henrion}]  
 The convex hull of any trigonometric curve is a projected spectrahedron.  
 In particular, all Carath\'eodory orbitopes are projected spectrahedra.  
\end{corollary}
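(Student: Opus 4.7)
The plan is to deduce the corollary directly from Theorem \ref{Th:toeplitz} by combining two soft facts: convex hull commutes with linear maps, and linear images of spectrahedra are (by definition) projected spectrahedra. By hypothesis, any trigonometric curve $C\subset\R^n$ is the image of the universal trigonometric moment curve $\Gamma_d\subset\R^{2d}$ in~\eqref{CaraCurve} (with $A=(1,2,\dotsc,d)$) under some linear map $\pi\colon\R^{2d}\to\R^n$, for $d$ large enough. Then
\[
   \conv(C) \,=\, \conv(\pi(\Gamma_d)) \,=\, \pi(\conv(\Gamma_d)) \,=\, \pi(\Cara_d).
\]
So it suffices to exhibit $\Cara_d$ itself as a projected spectrahedron, since projected spectrahedra are closed under further linear projection.

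For that, I would apply Theorem \ref{Th:toeplitz}: $\Cara_d$ is affinely isomorphic to the complex spectrahedron of positive semidefinite Hermitian Toeplitz matrices with unit diagonal. To stay inside the category of real spectrahedra, I would invoke the standard translation recorded just after \eqref{toeplitz}: a Hermitian matrix $H=F+Gi$ (with $F$ real symmetric, $G$ real skew-symmetric) is positive semidefinite if and only if the real symmetric block matrix $\left(\begin{smallmatrix} F & -G \\ G & F \end{smallmatrix}\right)$ is positive semidefinite. Applying this substitution termwise to \eqref{toeplitz} produces an affine slice of the PSD cone in $\Sym_2(\R^{2d})$, i.e.\ an honest real spectrahedron, affinely isomorphic to $\Cara_d$. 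In particular $\Cara_d$ is itself a (not merely projected) spectrahedron.

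Composing with $\pi$ then presents $\conv(C)$ as the linear image of a spectrahedron, which is the definition of a projected spectrahedron. The in particular clause follows at once, since every Carath\'eodory orbitope $\Cara_A$ is, by Lemma \ref{L:all_Cara} and the opening discussion of Section 5, the convex hull of the trigonometric curve \eqref{CaraCurve} associated to $A$, hence is the image of $\Cara_{a_d}$ under the coordinate projection selecting the blocks indexed by $a_1,\dotsc,a_d$.

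There is essentially no obstacle here beyond bookkeeping: the content lies entirely in Theorem \ref{Th:toeplitz}, and the corollary is just the observation that spectrahedrality of the universal object transports through linear maps. The only small care needed is to make the Hermitian-to-real passage explicit if one wants the representation over $\R$ rather than over $\C$; this is routine given the block-matrix trick above.
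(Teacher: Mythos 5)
Your proposal is correct and is essentially the argument the paper intends: the corollary follows immediately from Theorem~\ref{Th:toeplitz} because a trigonometric curve is by definition a linear image of the universal trigonometric moment curve, convex hulls commute with linear maps, and the Toeplitz representation (realified via the block-matrix trick recorded after~\eqref{toeplitz}) exhibits $\Cara_d$ as a spectrahedron. Your handling of the ``in particular'' clause via coordinate projection from $\Cara_{(1,\dotsc,a_d)}$ matches the paper's setup as well.
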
  
 
\begin{figure}[htb]  
\[  
  \begin{picture}(400,135)(0,-20)  
   \put(  0,0){\includegraphics[height=120pt]{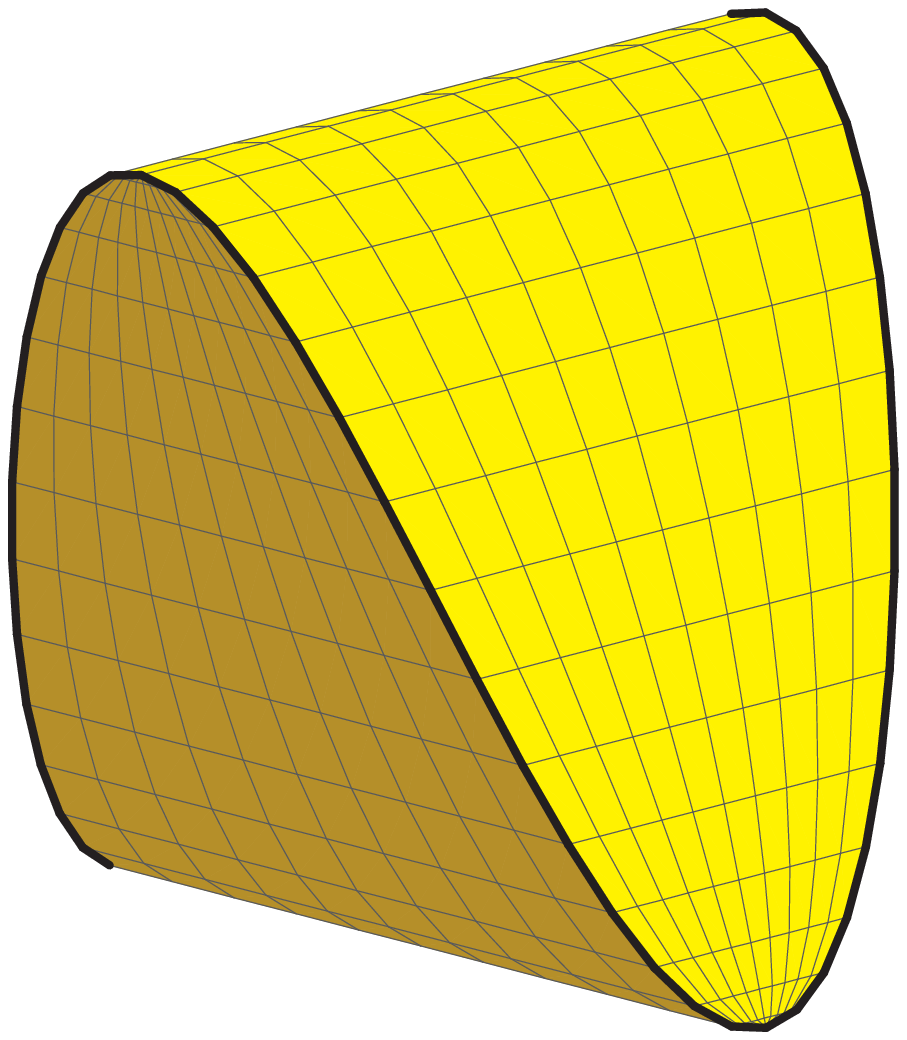}}  
   \put(135,0){\includegraphics[height=120pt]{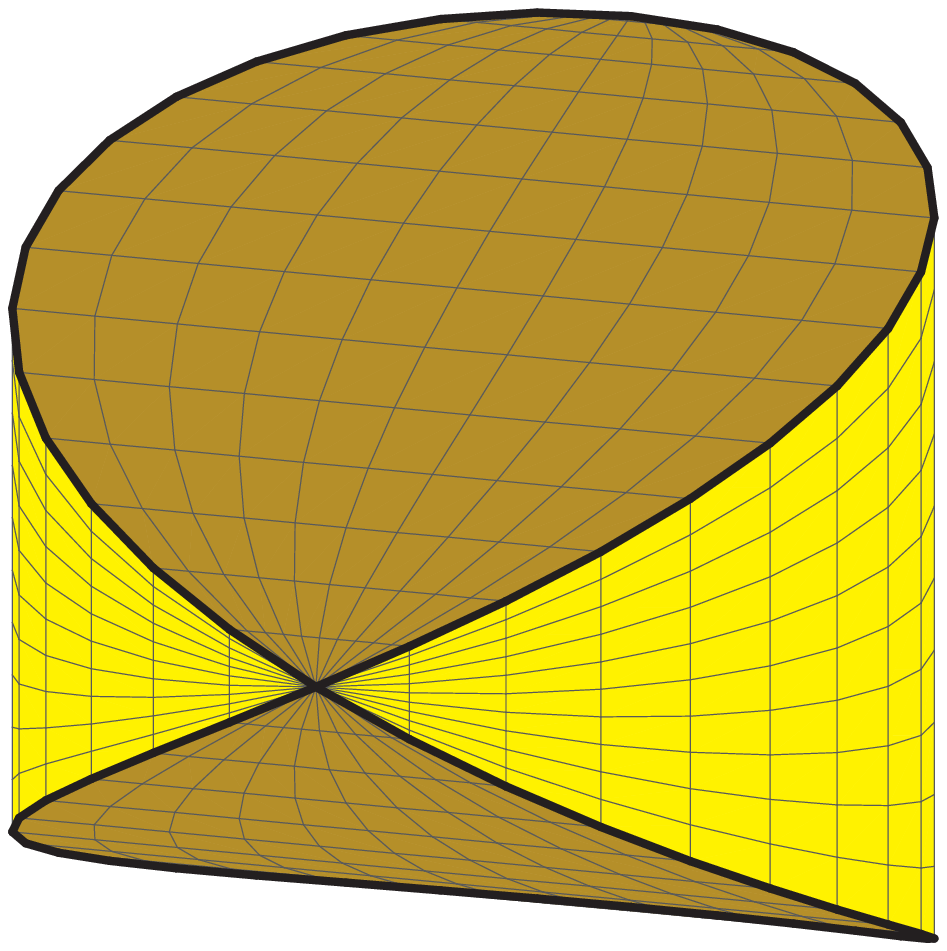}}  
   \put(280,0){\includegraphics[height=120pt]{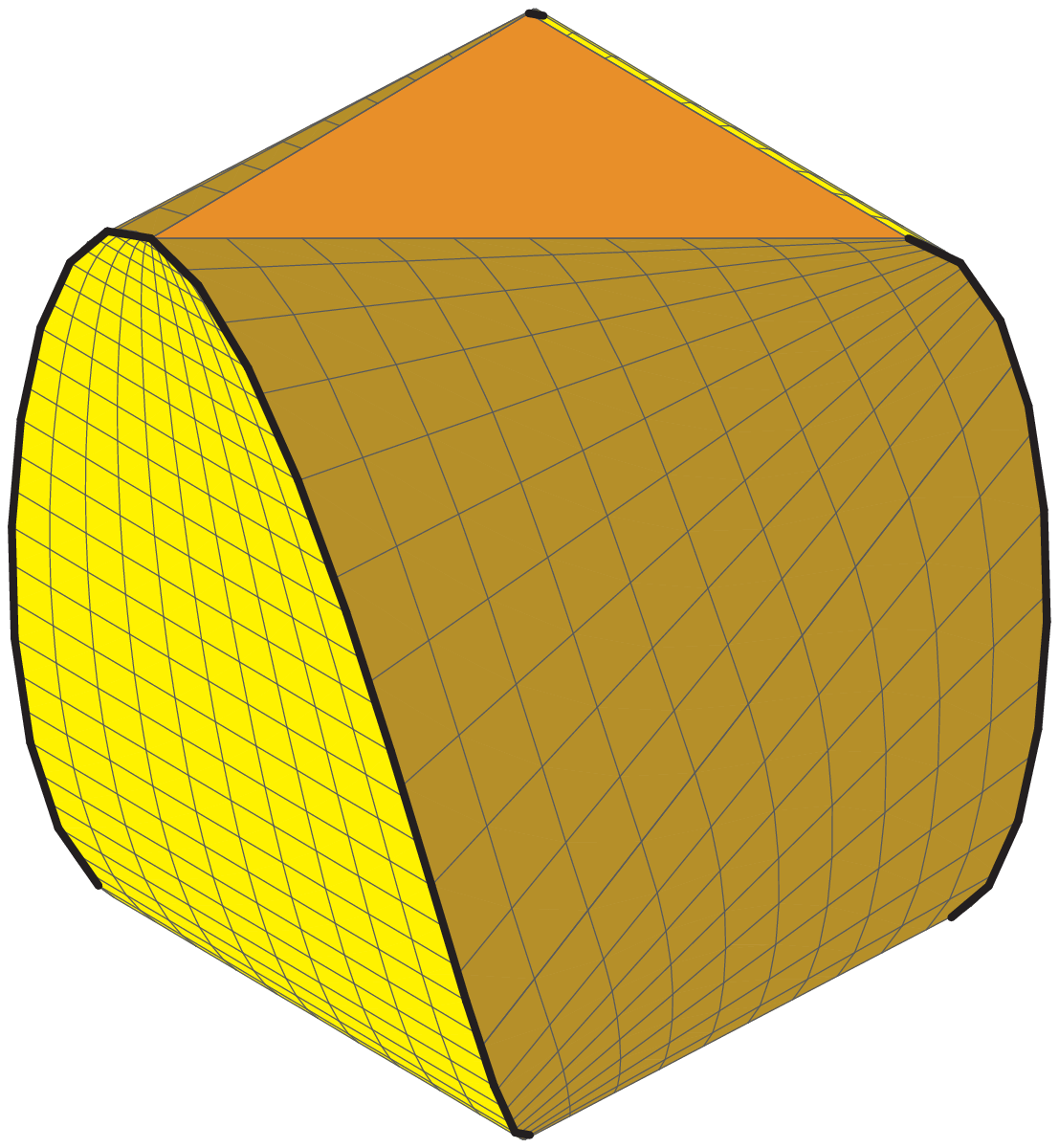}}  
   \put(  0,-17){$(\cos(\theta),\sin(\theta),\cos(2\theta))$}  
   \put(135,-17){$(\cos(2\theta),\sin(2\theta),\cos(\theta))$}  
   \put(280,-17){$(\cos(\theta),\sin(2\theta),\cos(3\theta))$}  
  \end{picture}  
\]  
\vskip -0.3cm  
\caption{Convex hulls of three trigonometric curves.}  
\label{F:trig_curves}    
\end{figure}

Figure~\ref{F:trig_curves} shows the convex hulls of three   
trigonometric curves in $\R^3$. The left and middle convex bodies are each the intersection
of two convex quadratic cylinders
($2x^2 = 1 + z$ and $2y^2 = 1 - z$ for the former; $x^2 + y^2 = 1$ and 
$2z^2 = 1 + x$ for the latter) and hence are spectrahedra. 
The rightmost convex body is  
visibly not a spectrahedron. Its exposed points are  
$(\cos(\theta),\sin(2\theta),\cos(3\theta))$ for    
$\theta\in(-\frac{\pi}{3},\frac{\pi}{3})\cup (\frac{2\pi}{3},\frac{4\pi}{3})$,  
%  
% Frank: After some reflection, I think that the endpoint values of   
%  \theta are also not exposed  
%  
and it has two algebraic  families of one-dimensional facets.  
In addition, there are two two-dimensional facets,  
namely the equilateral triangles for  
$\,\theta = 0, \frac{2 \pi}{3}, \frac{4 \pi}{3}$ and  
$\,\theta = \pi, \frac{\pi}{3}, - \frac{\pi}{3}$.  
The six edges of these two triangles are one-dimensional non-exposed faces,  
as there is no linear function which achieves its minimum on this body   
along these edges.  Also, exactly one vertex of each triangle at $\theta=0$ and
$\theta=\pi$ is exposed.
We conclude that this convex body  
is a projected spectrahedron but it is not a spectrahedron.

%%%%%%%%%%%%%%%%%%%%%%%%%%%%%%%%%%%%%%%%%%%%%%%%%%%%%%%%%%%%%%%  
\begin{proof}[Proof of Theorem~\ref{Th:toeplitz}]  
The coorbitope cone dual to $\Cara_d$ consists of all  
affine-linear functions that are non-negative on $\Cara_d$.    
These correspond to  
non-negative trigonometric polynomials:  
\[   
    \widehat{\Cara}^\circ_d  
 \,= \,\bigl\{ (\delta,a_1, b_1,\dots, a_d, b_d) \in \R^{2d+1} :   
    \delta + \sum_{k=1}^d a_k \cos(k\theta) + b_k \sin(k\theta) \ge 0 \text{   
    for all }\theta \bigr\} .  
\]   
We identify each point $ (\delta,a_1, b_1,\ldots, a_d, b_d) $ in $ \R^{2d+1}$  
with the Laurent polynomial  
 \begin{equation}\label{Eq:trig_poly}   
   R(z) \,\,= \, \sum_{k=-d}^d u_k z^k \in \C[z,z^{-1}]   
 \end{equation}  
with $u_0 = \delta$, $u_k = \tfrac{1}{2}(a_k -   b_k i)$,  
$i=\sqrt{-1}$, and $u_{-k} =   
\overline{u_k}$.   
Then $\overline{R(z)}=R(\overline{z}^{-1})$ and $R\in\widehat{\Cara}^\circ_d$ if and only if  
$R$ is non-negative on the unit circle $\sphere^1$ of $\C$.  
Roots of $R$ occur in pairs $\alpha,\overline{\alpha}^{-1}$ and those on   
$\sphere^1$  
have even multiplicity.  
Choosing one root from each pair gives the factorization  
 \begin{equation}  
 \label{circlesos1}  
        R(z) \,\, = \,\, \overline{H}(z^{-1}) \cdot H(z),   
 \end{equation}  
where $H\in\C[z]$ has degree $d$ and the coefficient vectors of $H$ and $\overline{H}$ are  
complex conjugate.  
This factorization is the classical Fej\'er-Riesz Theorem.  
   
Utilizing the monomial map $\gamma_d : \C \rightarrow \C^{d+1}$ with   
$\gamma_d(z) = (1,z,z^2,\dots,z^d)^T$, this is equivalent to the following:  
    A trigonometric polynomial $R(z)$ is non-negative on the unit circle if and   
    only if there is a non-zero vector $h\in \C^{d{+}1}$ such that   
%  
%  The existence of a PD hermitian matrix was not clear to Frank that this was equivalent to  
%  
% "a positive semidefinite Hermitian matrix $H \in \C^{(d+1)\times (d+1)}$ such that"  
%  
%  Why if such a representation exists, is there one with rank$H=1$?  
%  
\begin{equation}  
\label{circlesos2}  
        R(z) \,\, = \,\, \gamma_d(z^{-1})^T\cdot \overline{h}h^T \cdot \gamma_d(z).   
\end{equation}  
  
A point $(c_1,s_1,\dots,c_d,s_d) \in \R^{2d}$ belongs to  
the Carath\'eodory orbitope $\,\Cara_d\,$ if and only if    
\begin{equation}  
\label{circlepairing}  
 \,\,  
    \delta + \sum_{k=1}^d a_k c_k + b_k s_k \, \ge \, 0 \quad \,\,  
\hbox{for all $\, (\delta,a_1,b_1,\dots,a_d,b_d) \in \widehat{\Cara}^\circ_d$.}  
\end{equation}  
The sum on the left equals the Hermitian  
inner product in $\C^{2d+1}$ of the coefficient vector  
$u$ of the polynomial $R(z)$ and the vector  
$\zeta=(x,1,y)$ with $x_k,y_k$ as in (\ref{toeplitz}).  
The formula (\ref{circlesos2}) expresses $u$  
as the image of the Hermitian matrix $\overline{h}h^T$ under some linear projection $\pi$.   
If $\pi^*$ denotes the linear map dual to $\pi$ then   
$X = \pi^*(\zeta) $ is precisely the Hermitian Toeplitz matrix in (\ref{toeplitz}).  
We conclude that the sum in (\ref{circlepairing}) equals  
\[  
     \langle \zeta, \pi(\overline{h}h^T) \rangle \,=\,   
    \langle \pi^*(\zeta), \overline{h}h^T \rangle \,=\,   
    \Tr (X \cdot \overline{h}h^T) \,=\,  h^T\cdot X \cdot \overline{h}.  
\]   
Thus the point $(c_1,s_1,\ldots,c_d,s_d)$ represented by  
a Hermitian Toeplitz matrix $X$ lies in $\Cara_d$ if and only  
if $\,h^T\cdot X \cdot \overline{h} \geq 0\,$ for all $\,h \in \C^{d+1}\,$    
if and only if $X$  
is positive semidefinite.    
\end{proof}   
  
The proof of Theorem~\ref{Th:toeplitz} elucidates  
the known results about the  
facial structure of $\Cara_d$.  
  
\begin{corollary}\label{neighborly}  
    The universal Carath\'eodory  orbitope  
$\mathcal{C}_d$ is a neighborly simplicial convex body.   
    Its faces are in inclusion-preserving correspondence with sets   
    of at most $d$ points on the circle.   
\end{corollary}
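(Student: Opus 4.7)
The plan is to read off Corollary~\ref{neighborly} directly from the Fej\'er--Riesz analysis used in the proof of Theorem~\ref{Th:toeplitz}, combined with a short Vandermonde argument for affine independence. Since $\Cara_d$ is a spectrahedron, every face is exposed, so I only need to describe the exposed faces. An exposed face is the set of curve points on which a non-negative trigonometric polynomial $R(z)$ of degree at most $d$ vanishes. By~\eqref{circlesos1}, $R(z)=\overline{H}(z^{-1})H(z)=|H(e^{i\theta})|^2$ with $H\in\C[z]$ of degree $\leq d$, so its zero set on $\sphere^1$ consists of at most $d$ points.

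First I would show the easy direction: given any $S=\{\theta_1,\dotsc,\theta_k\}\subseteq\sphere^1$ with $k\leq d$, I set $H(z)=\prod_{j=1}^k(z-e^{i\theta_j})$ (padding the coefficient vector $h$ with zeros so that it lies in $\C^{d+1}$). Then $R(z)=|H(e^{i\theta})|^2$ is a non-negative trigonometric polynomial vanishing exactly on $S$, and by the pairing (\ref{circlepairing}) the corresponding affine-linear functional exposes the face $F_S:=\conv\{\gamma_d(\theta)\,:\,\theta\in S\}$ of $\Cara_d$. Conversely, any exposed face is $F_S$ for some such $S$ with $|S|\leq d$, by the factorization above. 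The assignment $S\mapsto F_S$ is manifestly inclusion-preserving.

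Next I would establish simpliciality and neighborliness simultaneously by proving that any $k\leq d$ points $\gamma_d(\theta_1),\dotsc,\gamma_d(\theta_k)$ on the trigonometric moment curve are affinely independent. An affine relation $\sum_j c_j\gamma_d(\theta_j)=0$ with $\sum_j c_j=0$ is equivalent, after passing to complex coordinates $z_j=e^{i\theta_j}$, to the system
\[
   \sum_{j=1}^k c_j z_j^m \,=\, 0 \qquad \text{for } m=0,1,\dotsc,d.
\]
This is a Vandermonde system of $d+1\geq k$ equations in $k$ unknowns with distinct nodes $z_1,\dotsc,z_k$, hence its only solution is $c_1=\dotsb=c_k=0$. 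Therefore $F_S$ is a $(|S|{-}1)$-simplex, every $k\leq d$ points of the curve are vertices of a face, and the face lattice of $\Cara_d$ is the full simplicial complex of subsets of $\sphere^1$ of size at most $d$.

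I do not anticipate serious obstacles, as the heavy lifting has already been done in Theorem~\ref{Th:toeplitz}. The one point that requires care is the multiplicity bookkeeping in the Fej\'er--Riesz step: real roots of $R$ on $\sphere^1$ have even multiplicity and are split evenly between $H(z)$ and $\overline{H}(z^{-1})$, so the zero set of $R$ on $\sphere^1$ coincides with the zero set of $H$ on $\sphere^1$, giving the bound $\leq d$. With that bookkeeping in place, the bijection, the inclusion-preservation, and the simpliciality/neighborliness all fall out cleanly.
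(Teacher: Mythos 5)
Your argument is correct and follows essentially the same route as the paper: identify the coorbitope cone with non-negative trigonometric polynomials, use the Fej\'er--Riesz factorization (equivalently, the even multiplicity of roots on $\sphere^1$) to bound the number of zeros of a supporting polynomial by $d$, construct such a polynomial for any prescribed set of at most $d$ points for the converse, and conclude simpliciality from affine independence of few points on the curve. Your explicit Vandermonde computation merely fills in the affine-independence claim that the paper asserts without proof (and in the slightly stronger form that any $2d+1$ points are affinely independent).
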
   
  
\begin{proof}   
 A Laurent polynomial $R$ as in~\eqref{Eq:trig_poly} lies in the boundary of  
 the coorbitope cone $\widehat{\Cara}^\circ_d$ if and only if it is  
 non-negative on the unit circle $\sphere^1$ 
 but not strictly positive. 
It supports the face of $\Cara_d$ spanned by the points of the trigonometric  
 moment curve corresponding to its zeros in $\sphere^1$.  Each zero has  
 multiplicity at least $2$, so there are at most $d$ such points, and   
conversely  any subset of $\leq d$ points supports a face.    
Since any fewer than $2d+2$ points on  
 the curve are affinely independent and since all faces  
 are exposed, these faces are simplices.   
\end{proof}   
  
Corollary \ref{neighborly} implies that  
 the Carath\'eodory number $\cara(\Cara_d)$ equals $d+1$, as no point in the  
 interior of $\Cara_d$ lies in the convex hull of $d$ points of the orbit, but  $\cara(\Cara_d)$  
 is at most one more than the maximal Carath\'eodory number of a facet, by Lemma~3.2  
 of~\cite{LSS08}.

Carath\'eodory orbitopes are generally not spectrahedra  
because they can possess non-exposed faces.  
Smilansky \cite{Smi} showed that if we write  
$\rho(\theta)\in\R^4$ for a point on the trigonometric moment curve with weights $1$ and $3$  
then the faces of $\Cara_{(1,3)}$   
are exactly the points $\rho(\theta)$ of the orbit, the line segments  
${\rm conv}\{\rho(\theta),\rho(\theta+\alpha)\}$, where $0<\alpha<\frac{2\pi}{3}$, and  
the triangles  
\[{\rm conv}\{  
    \rho(\theta), \   
    \rho(\theta+2\pi/3), \ 
    \rho(\theta+4\pi/3)\}.  
\]  
In particular, each line segment  
${\rm conv}\{\rho(\theta),\rho(\theta+\frac{2\pi}{3}\}$ is a non-exposed  
edge of $\Cara_{(1,3)}$. We conclude that the  
Barvinok-Novik orbitope $\Cara_{(1,3)}$  is not a spectrahedron.  
%See also \cite[\S 2.4]{Ranestad2}. 
  
 The Toeplitz representation (\ref{toeplitz}) of the universal   
Carath\'eodory orbitope  
 $\Cara_d$ reveals complete algebraic information. For example, the   
 algebraic boundary $\partial_a \Cara_d$ is   
the irreducible hypersurface of degree $d+1$ defined by the determinant of  
 that  $(d{+}1) \times (d{+} 1)$-matrix.   
 The  curve~\eqref{CaraCurve} itself is the  
 set of all positive definite Hermitian Toeplitz matrices  
 of rank one.   
The $2 \times 2$-minors of the matrix (\ref{toeplitz})  
generate the  prime ideal  $J_{(1,\ldots,d)}$ of this rational curve.  
   
The union of the $(j-1)$-dimensional faces  of $\Cara_d$  
is the set of positive definite  
 Hermitian Toeplitz matrices of rank $j$, as a point lies on a  $(j-1)$-dimensional face if and  
 only if it is the convex combination of $j$ points of the curve.  
The Zariski closure of this stratum is the set of all rank $j$ Hermitian Toeplitz matrices  
 which is defined by the vanishing of the  $(j{+}1)\times(j{+}1)$ minors of that matrix.  
This is also the $j$th secant variety of the Carath\'eodory curve.  
Lastly, this rank stratification is a Whitney stratification of the   
algebraic boundary.  
  
The derivation of the algebraic description of  
$\Cara_A$ for arbitrary $A = (a_1,a_2,\ldots,a_d)$ requires  
the process of elimination. For instance, the ideal   
$J_A$ of the trigonometric moment curve (\ref{CaraCurve})  
can be computed from the ideal of $2 {\times} 2$-minors  
for $J_{(1,2,\ldots,a_d)}$ by eliminating all  
unknowns $x_j, y_j$ with $j \not\in \{a_1,\ldots,a_d\}$.  
The equation of the algebraic boundary $\partial_a \Cara_A$ is  
obtained by the same elimination applied to a certain  
ideal of larger minors of the Toeplitz  
matrix (\ref{toeplitz}).  
  
We refer to recent work of Vinzant \cite{Vinz} 
for a detailed study of the edges of  
Barvinok-Novik orbitopes. 
An analysis of the algebraic boundary the orbitope 
$ \Cara_{(1,3)} $ can be found 
in \cite[\S 2.4]{Ranestad2}.

%%%%%%%%%%%%%%%%%%%%%%%%%%%%%%%%%%%%%%%%%%%%%%%%%%%%%%%%%%%%%%%%%%%%%%%%%%%%%%%%%  
 \subsection{Hankel representation}\label{S:Hankel}

The cone over the degree $d$ moment curve is the image of $\R^2$ in its $d$th symmetric  
power $\mbox{Sym}_d \R^2\simeq \R^{d+1}$ under the map  
\[  
   \nu_d\ \colon\ (x,y)\ \longmapsto\   
    \bigl(\,x^d,\, x^{d-1}y,\, x^{d-2}y^2,\,\dotsc,\, y^d\,\bigr)\,.  
\]  
This map is naturally $SO(2)$-equivariant.  
We define the {\em compact moment curve} to be the image $\nu_d(\sphere^1)$   
of the unit circle under the map $\nu_d$. This restricted map equals  
\[  
   \sphere^1\ni \theta\ \longmapsto\   
   (\cos^d(\theta), \cos^{d-1}(\theta)\sin(\theta), \dotsc, \sin^d(\theta))\,.  
\]  
The convex hull of the curve $\nu_d(\sphere^1)$ is an orbitope.  
By Lemma~\ref{L:all_Cara}, it is  
isomorphic to some Carath\'eodory orbitope $\Cara_A$.  
The following lemma makes this identification explicit.  
  
\begin{lemma}\label{L:identification}  
 If $d \in \N$ is odd, then  $\conv(\nu_d(\sphere^1))$ is isomorphic to the Barvinok-Novik  
 orbitope $\Cara_{(1,3,\dotsc,d)}$.   
 If $d \in \N$ is even, then $\conv(\nu_d(\sphere^1))$ is isomorphic to   
$\Cara_{(0,2,4,\dotsc,d)}$,  
 which is affinely isomorphic to the universal Carath\'eodory orbitope $\Cara_{d/2}$.  
\end{lemma}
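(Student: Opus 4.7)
The plan is to reduce to Lemma~\ref{L:all_Cara} by computing the isotypical decomposition of the $SO(2)$-module $\Sym_d \R^2$ and then checking that the generating vector $\nu_d(1,0)$ has a non-zero projection onto each isotypic component.

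First I would decompose $\Sym_d\R^2$ as an $SO(2)$-module using weights. Complexify: write $\C^2 = \C e_+ \oplus \C e_-$, where the $e_\pm$ are eigenvectors for rotation by $\theta$ with eigenvalues $e^{\pm i\theta}$. The monomials $e_+^{d-j} e_-^j$ for $j = 0,1,\dots,d$ form a weight basis of $\Sym_d\C^2$ with weights $d-2j$. Pairing conjugate weights $\pm k$ gives the real irreducible $\rho_{|k|}$. For $d$ odd the weights are $\{\pm 1, \pm 3, \dots, \pm d\}$, yielding $\Sym_d\R^2 \cong \rho_1 \oplus \rho_3 \oplus \cdots \oplus \rho_d$; for $d$ even the weights are $\{0, \pm 2, \pm 4, \dots, \pm d\}$, yielding $\Sym_d\R^2 \cong \rho_0 \oplus \rho_2 \oplus \cdots \oplus \rho_d$ with $\rho_0$ one-dimensional. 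In both cases the module is multiplicity-free.

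Next I would verify that $v := \nu_d(1,0) = x^d$ has non-zero projection onto every isotypic summand, equivalently that the linear span of the orbit $\nu_d(\sphere^1)$ is all of $\Sym_d\R^2$. The orbit is parametrized by the $d{+}1$ functions $\cos^{d-j}(\theta)\sin^j(\theta)$ for $j = 0,\dots,d$, and these are linearly independent: any vanishing linear combination $\sum_j c_j x^{d-j} y^j$ is a homogeneous polynomial of degree $d$ vanishing on $\sphere^1$ and hence, by homogeneity, vanishing identically on $\R^2$, forcing $c_j = 0$ for all $j$. Thus the span of the orbit has dimension $d{+}1 = \dim \Sym_d\R^2$, and $v$ has non-trivial image in every isotypic component.

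For $d$ odd I would then apply Lemma~\ref{L:all_Cara} directly: the ambient module has no trivial summand and is multiplicity-free with irreducibles indexed by $(1,3,\dots,d)$, so $\conv(\nu_d(\sphere^1))$ is isomorphic to the Barvinok--Novik orbitope $\Cara_{(1,3,\dots,d)}$. For $d$ even, decomposing $v = v_0 \oplus v'$ along $\Sym_d\R^2 = \rho_0 \oplus (\rho_2 \oplus \cdots \oplus \rho_d)$ gives $\conv(SO(2)\cdot v) = \{v_0\} \oplus \conv(SO(2)\cdot v')$, as observed in Section~\ref{sec:Setup}. This identifies the orbitope with $\Cara_{(0,2,4,\dots,d)}$. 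Dropping the fixed $v_0$ is an affine isomorphism onto $\Cara_{(2,4,\dots,d)}$, and since $\gcd(2,4,\dots,d) = 2$, the second part of Lemma~\ref{L:all_Cara} gives a further affine isomorphism to $\Cara_{(1,2,\dots,d/2)} = \Cara_{d/2}$.

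The main bookkeeping issue will be keeping track of the trivial component $\rho_0$ in the even case and matching it consistently with the conventions used to define $\Cara_A$. Once the weight decomposition and the homogeneity argument for spanning are in place, everything else is a direct application of Lemma~\ref{L:all_Cara}.
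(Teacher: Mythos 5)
Your proof is correct and takes essentially the same route as the paper's: complexify to read off the weights $d, d-2, \dotsc, -d$ of $\Sym_d\C^2$, observe the module is multiplicity-free, check that the orbit of $x^d$ spans (your homogeneity argument is a slightly more explicit version of the paper's remark that $\Sym_d\R^2$ is spanned by pure powers of linear forms), and then invoke Lemma~\ref{L:all_Cara} together with the gcd reduction for the even case.
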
  
%%%%%%%%%%%%%%%%%%%%%%%%%%%%%%%%%%%%%%%%%%%%%%%%%%%%%%%%%%%%%%%  
  
\begin{proof}  
  Complexifying the $SO(2)$-module $\rho_A$  where   
  $A=(a_1,\dotsc,a_d)$ gives the $\C^\times$-module with   
symmetric weights   
  $\pm a_1, \dotsc,\pm a_d$.  
  Thus the underlying real $SO(2)$-module of this $\C^\times$-module   
 is $\rho_{(|a_1|,\dotsc,|a_d|)}$.  
 The lemma follows because the complexified   
representation $\mbox{Sym}_d \C^2$ of   
  $\mbox{Sym}_d \R^2 $ has weights $d, d{-}2, d{-}4, \dotsc, -d$, and  
 this representation is  
  spanned by the pure powers of linear forms, so every weight appears in the linear span of the  
  orbit.   
\end{proof}  
  
Suppose now that $d=2n$ is even.  
We describe the moment curve and the Carath\'eodory orbitope in   
coordinates  
$(\l_0,\l_1,\dotsc,\l_{2n})$ for $\R^{2n+1}$.  
Fix the  $(n{+}1) {\times} (n{+}1)$-Hankel matrix   
\begin{equation}  
 \label{Eq:Hankel}  
   K(\l) \quad = \quad \begin{pmatrix}   
      \l_0 & \l_1 & \l_2 & \cdots & \l_n \\   
      \l_1 & \l_2 & \l_3 & \cdots  & \l_{n+1} \\    
      \l_2  & \l_3 & \l_4 & \cdots & \l_{n+2} \\   
       \vdots & \vdots  & \vdots &  & \vdots \\   
       \l_n & \l_{n+1} & \l_{n+2} & \cdots & \l_{2n}   
   \end{pmatrix}\ .  
\end{equation}  
   
\begin{theorem} \label{Th:Hankel}  
The even moment curve consists of all vectors   
$\l \in \R^{2n+1}$ such that the Hankel matrix   
$K(\l)$ has rank one, is positive semidefinite,   
and satisfies the linear equation   
\begin{equation}   
\label{linrel}   
 \sum_{j=0}^n \binom{n}{j} \l_{2j} \,\, = \,\, 1 .    
 \end{equation}   
Its convex hull consists of all $\l$ such that   
$K(\l)$ is positive semidefinite and satisfies~\eqref{linrel}.   
\end{theorem}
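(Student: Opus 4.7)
For the first claim, direct computation handles both directions. If $\lambda_k = \cos^{2n-k}\theta\sin^k\theta$, then $K(\lambda) = vv^T$ with $v_i = \cos^{n-i}\theta\sin^i\theta$, which is manifestly rank one and positive semidefinite, and the linear relation $\sum_{j=0}^n \binom{n}{j}\lambda_{2j} = (\cos^2\theta + \sin^2\theta)^n = 1$ is the binomial theorem. Conversely, if $K(\lambda)$ has rank one, factor $K(\lambda) = vv^T$; the Hankel identities $v_iv_j = v_{i'}v_{j'}$ whenever $i+j = i'+j'$ force $v$ to be a geometric progression $v_i = v_0 t^i$ (or, in the degenerate case $v_0 = 0$, to be supported on the last coordinate alone). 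The linear relation then normalizes $v_0$, identifying $\lambda$ with $\nu_{2n}(x,y)$ for a unit vector $(x,y) \in \sphere^1$.

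For the convex hull, I would set $C := \R_{\geq 0}\cdot\conv(\nu_{2n}(\sphere^1))$, which is a closed convex cone because $\nu_{2n}(\sphere^1)$ is compact and does not contain the origin. The forward inclusion $\conv(\nu_{2n}(\sphere^1)) \subseteq \{\lambda : K(\lambda)\succeq 0,\ \sum_j \binom{n}{j}\lambda_{2j} = 1\}$ follows from the first claim by taking convex combinations. For the reverse inclusion I would use conic duality. The dual cone $C^*$ consists of $a \in \R^{2n+1}$ such that the binary form $p_a(x,y) := \sum_k a_k x^{2n-k} y^k$ is non-negative on $\sphere^1$, equivalently on all of $\R^2$ since $p_a$ is homogeneous of even degree. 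By Hilbert's classical theorem for non-negative binary forms, every such $p_a$ is a sum of squares, so $p_a = v^T M v$ for some $M \succeq 0$ with $v = (x^n, x^{n-1}y, \ldots, y^n)^T$. Matching coefficients yields $a_k = \sum_{i+j=k} M_{ij}$, so the pairing becomes $\langle a, \lambda\rangle = \langle M, K(\lambda)\rangle$. Bidualizing via $C = C^{**}$ (valid because $C$ is closed) then identifies $C$ with $\{\lambda : K(\lambda) \succeq 0\}$.

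Finally, any $\lambda = \sum_i c_i \nu_{2n}(x_i, y_i) \in C$ with $(x_i, y_i) \in \sphere^1$ satisfies $\sum_j \binom{n}{j}\lambda_{2j} = \sum_i c_i (x_i^2 + y_i^2)^n = \sum_i c_i$, so the affine hyperplane in the theorem slices $C$ exactly at the convex combinations, giving the desired description of $\conv(\nu_{2n}(\sphere^1))$. The main obstacle is the SOS step, namely the invocation of Hilbert's theorem on non-negative binary forms; the closedness of $C$, required for the bipolar identity $C = C^{**}$, is a routine consequence of the compactness of $\nu_{2n}(\sphere^1)$.
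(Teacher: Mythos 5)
Your proposal is correct and follows essentially the same route as the paper: both arguments identify the dual cone with the non-negative binary forms, invoke the fact that such forms are sums of squares, and convert the resulting pairing into positive semidefiniteness of the Hankel matrix $K(\l)$. The only differences are cosmetic — you make the bipolar step $C=C^{**}$ and its closedness hypothesis explicit, and you supply the direct rank-one verification for the curve itself, which the paper's proof leaves implicit.
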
   
   
Thus the Carath\'eodory orbitope $\Cara_n$ has a second  
{\em Hankel representation} as a  
spectrahedron.  
%While spectrahedral representations are not unique (they  
% depend in part on choices of  
% coordinates), it is striking that $\Cara_n$ has two very different,  
%yet both highly  structured representations.  
 The proof of this theorem follows from the well-known fact   
that every non-negative polynomial in one variable is a sum   
of squares of polynomials. It uses the duality in \cite[\S 3]{Rez}.   
  
\begin{proof}  
 Observe that the points of the even compact moment curve  
 satisfy~\eqref{linrel}, which comes from the polynomial identity   
 $(x^2+y^2)^n=1$.  As $\mbox{Sym}_{2n}\R^2$ has just one copy of the  
 trivial representation, this is the only affine equation that holds on  
 $\nu_{2n}(\sphere^1)$.  The dual to $\mbox{Sym}_{2n}\R^2$ is the space  
 $\R[x,y]_{2n}$ of real homogeneous polynomials of degree $2n$ in $x$ and $y$.  
 The coefficients of such a polynomial gives coordinates for $\R[x,y]_{2n}$.  
  
The coorbitope cone $\widehat{\nu_{2n}(\sphere^1)^\circ}$   
dual to the orbitope $\conv(\nu_{2n}(\sphere^1))$ is the cone  
 of homogeneous polynomials of degree $2n$ that are non-negative on $\R^2$.  
 Thus a point $(\l_0,\dotsc,\l_{2n})\in\mbox{Sym}_{2n}\R^2$ lies in the orbitope  
 $\conv(\nu_{2n}(\sphere^1))$ if and only if it satisfies~\eqref{linrel} and  
\begin{equation}\label{Eq:nonnegsum}  
   \sum_{i=0}^{2n} f_i \l_i\ \geq\ 0\, 
\end{equation}  
 for every non-negative polynomial $f(x,y)=\sum_i f_ix^iy^{2n-i}$.  
  
 Since non-negative homogeneous polynomials in $x$ and $y$   
are sums of squares  
(cf.~\cite{Rez}), we only need these inequalities to  
 hold when $f(x,y)=g(x,y)^2$ is a square.  
 Writing $g=(g_0,\dotsc,g_{n})$ for the coefficient vector of the polynomial $g(x,y)$,   
 the sum~\eqref{Eq:nonnegsum} becomes  
\[  
   \sum_{i=0}^{2n} \l_i \sum_{j+k=i} g_j g_k \,\,\, = \,\,\,  
    g^T \cdot K(\l)  \cdot g\,,  
\]  
 where $K(\l)$ is the Hankel matrix~\eqref{Eq:Hankel}.  
 This proves the theorem.  
\end{proof}  
  
%%%%%%%%%%%%%%%%%%%%%%%%%%%%%%%%%%%%%%%%%%%%%%%%%%%%%%%%%%%%%%%%%%%%%  
  
\section{Veronese Orbitopes}  
   
The Hankel representation of the universal Carath\'eodory orbitope   
arose by considering the image of the circle $\sphere^1\subset\R^2$   
in $\mbox{Sym}_{2n}\R^2$ and its relation to non-negative binary forms.  
Generalizing from $\R^2$ to $\R^d$ gives the Veronese orbitopes  
 whose coorbitope cones (\ref{eq:coorbitopecone})  
 consist of non-negative $d$-ary forms.  
When $d\geq 3$, non-negative forms are not necessarily sums of squares, except for  
quadratic forms and the exceptional case of ternary quartics.  
%We study this exceptional Veronese orbitope in detail. 
% from the perspective of  convex algebraic geometry.  
  
The set of  decomposable symmetric tensors is the image of the  
 Veronese map   
\[  
  \nu_{m}\ \colon\ \R^d\ \longrightarrow\   
  \mbox{Sym}_m\R^d \,\simeq \,\R^{\binom{d+m-1}{m-1}}\,.  
\]  
The $SO(d)$-orbits through any two non-zero decomposable tensors are scalar  
multiples of each other and are thus isomorphic.  
We define the {\em Veronese orbitope} $\mathcal{V}_{d,m}$ to   
be the convex hull of the orbit through the specific decomposable  
tensor $\nu_{m}(1,0,\dotsc,0)$. That orbit is   
also the image $\nu_{m}(\sphere^{d-1})$  
of the unit $(d-1)$-sphere under the   
$m$-th Veronese embedding of $\R^d$.   
  
Suppose that $m=2n$ is even.  
Then the orbit $\nu_{m}(\sphere^{d-1})$  
can be identified with $\R\PP^{d-1}$ since  
 $\nu_{2n}$ is two-to-one with $\nu_{2n}(v)=\nu_{2n}(-v)$.  
The dual vector space to $\mbox{Sym}_{2n}\R^d$ is   
the space of homogeneous forms of degree $2n$ on  
 $\R^d$. The only invariant forms are those   
proportional to the form $\langle v,v\rangle^n$, so  
both $\mbox{Sym}_{2n}\R^d$ and its dual space contain one copy of the trivial  
representation, and $\nu_{2n}(\sphere^{d-1})$ lies in the hyperplane of  
$\mbox{Sym}_{2n}\R^d$ defined by $\langle v,v\rangle^n = 1$.  
  
The dual cone to the Veronese orbitope  
$\,\mathcal{V}_{d,2n} =  
{\rm conv}(\nu_{2n}(\sphere^{d-1}))\,$ is the cone  
of non-negative forms of degree $2n$ in   
$\mbox{Sym}_{2n}(\R^d)^*$.   
See also \cite[Example (1.2)]{BB}.  
We write $\,\widehat{\mathcal{V}}_{d,2n}^\circ$ for the   
Veronese coorbitope cone consisting of non-negative forms.  
The cone $\widehat{\mathcal{V}}_{d,2n}^\circ$ of non-negative forms  
contains the cone $\mathcal{K}_{d,2n}$ of sums of squares, but when  
$d\geq 3$,  $2n\geq 4$, and $(d,2n)\neq(3,4)$, Hilbert~\cite{hilbert} showed  
that the inclusion is strict.  
We refer to \cite{Ble} for a recent study which compares the dimension of the   
faces of these cones.   
  
The cone $\mathcal{K}_{d,2n}$ is naturally a projection of   
the positive semidefinite cone.  
Hence its dual cone $\,\mathcal{C}_{d,2n} = \mathcal{K}_{d,2n}^\circ\,$  
is a spectrahedron: it can  be realized as   
the intersection of the positive semidefinite cone with a certain linear   
space of generalized Hankel matrices,   
discussed in detail in Reznick's book \cite{Rez}.   
This spectrahedron $\mathcal{C}_{d,2n}$ is strictly larger   
than the Veronese orbitope $\mathcal{V}_{d,2n}$ when $d\geq 3$,    
$2n\geq 4$, and $(d,2n)\neq(3,4)$.  
In fact,  $\mathcal{V}_{d,2n}$ is precisely the convex hull of   
the subset of extreme points in $\mathcal{C}_{d,2n}$ that have rank one.   
  
We present a detailed case study of the exceptional  
case of ternary quartics, when $\,(d,2n)=(3,4)$. The Veronese orbitope  
$\mathcal{V}_{3,4} = {\rm conv}(\nu_4(\R^3))$ is a $14$-dimensional  
convex body. Let $\widehat{\mathcal{V}}_{3,4}$ be the  
$15$-dimensional cone over the Veronese orbitope  
$\mathcal{V}_{3,4}$.  
As all non-negative ternary quartics are sums of squares, we have  
the following identities of cones:  
\[  
    \widehat{\mathcal{V}}_{3,4} \,=\, \mathcal{C}_{3,4}  
  \quad \hbox{and} \quad  
    \widehat{\mathcal{V}}_{3,4}^\circ \,=\, \mathcal{K}_{3,4}.   
\]  
We next present Reznick's spectrahedral representation of  
$\mathcal{V}_{3,4}$. For this, we identify  $\mbox{Sym}_4\R^3$ with its dual,  
and we  introduce coordinates  
$\l=(\l_\alpha)$ where the indices $\alpha$ are   
the exponents of monomials in  
variables $x,y,z$ of degree $4$.  
The ternary quartic corresponding to $\l$ is  
 \begin{equation}\label{Eq_Ter_Quartic}  
   q_\l\ =\ \sum_\alpha \tbinom{4}{\alpha}\l_\alpha x^{\alpha_1}y^{\alpha_2}z^{\alpha_3}\,,  
 \end{equation}  
where $\binom{4}{\alpha} = \frac{4 !}{  
\alpha_1 ! \alpha_2 !\alpha_3 !}$ is the multinomial coefficient.  
The inner product  
$\,  
   \langle q_\l, q_\mu\rangle\ =\ \sum_\alpha  \tbinom{4}{\alpha}\l_\alpha\mu_\alpha \,$  
is $SO(3)$-invariant.  
Given a ternary quartic $q_\l$ in the notation \eqref{Eq_Ter_Quartic}, we  
associate to it the following  
symmetric $6 \times 6$-matrix with Hankel structure   
as in \cite[eqn.~(5.25)]{Rez}:   
 \begin{equation}\label{Eq:Reznick_Hankel}  
   K_\l \quad = \quad   
  \begin{pmatrix}   
   \l_{400} & \l_{220} & \l_{202} & \l_{310} & \l_{301} & \l_{211}  \\   
   \l_{220} & \l_{040} & \l_{022} & \l_{130} & \l_{121} & \l_{031} \\   
   \l_{202} & \l_{022} & \l_{004} & \l_{112} & \l_{103} & \l_{013} \\   
   \l_{310} & \l_{130} & \l_{112} & \l_{220} & \l_{211} & \l_{121} \\   
   \l_{301} & \l_{121} & \l_{103} & \l_{211} & \l_{202} & \l_{112} \\   
   \l_{211} & \l_{031} & \l_{013} & \l_{121} & \l_{112} & \l_{022}     
   \end{pmatrix}.   
 \end{equation}  
  
\begin{theorem}   
 The  Veronese orbitope   
 $\,\mathcal{V}_{3,4}$ is a spectrahedron.  
 It consists of all positive semidefinite   
 Hankel matrices $K_\l$ as in   
$\eqref{Eq:Reznick_Hankel}$ that satisfy the equation   
 \begin{equation}\label{Eq:affine}  
  \l_{400} + \l_{040}  + \l_{004} +    
  2 \l_{220} + 2 \l_{202} + 2 \l_{022} \,\,=\,\,  1.   
 \end{equation}  
\end{theorem}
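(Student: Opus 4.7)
The plan is to establish the two containments separately, with Hilbert's theorem on ternary quartics doing the heavy lifting for the nontrivial direction. First I will identify the image $\nu_4(v)$ of a unit vector $v\in\sphere^2$ with the coordinate vector $\l_\alpha=v_1^{\alpha_1}v_2^{\alpha_2}v_3^{\alpha_3}$, the key property being that the pairing becomes evaluation: $\langle q_\l,q_\mu\rangle = q_\mu(v)$ for $\l=\nu_4(v)$. The affine relation \eqref{Eq:affine} is then immediate, since $(v_1^2+v_2^2+v_3^2)^2=1$ expands precisely to the stated linear combination of the $\l_\alpha$; this relation is preserved by convex combinations, so it holds on all of $\mathcal{V}_{3,4}$.

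For the forward inclusion $\mathcal{V}_{3,4}\subseteq\{K_\l\succeq 0\}$, I would first observe the quadratic identity
\[
   h^T K_\l h \;=\; \langle q_\l, p^2\rangle
\]
where $h=(h_1,\dotsc,h_6)$ is the coefficient vector of the ternary quadratic $p=h_1x^2+h_2y^2+h_3z^2+h_4xy+h_5xz+h_6yz$ in the monomial basis matching the rows of \eqref{Eq:Reznick_Hankel}. This identity is a direct expansion: the multinomial weights $\binom{4}{\alpha}$ built into the inner product exactly match the number of pairs $(i,j)$ with $m_im_j=x^\alpha$. Granting the identity, if $\l=\nu_4(v)$ then $\langle q_\l,p^2\rangle = p^2(v)=p(v)^2\geq 0$, so $K_{\nu_4(v)}\succeq 0$ for every $v\in\sphere^2$. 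Since the PSD cone is convex, $K_\l\succeq 0$ for every $\l\in\mathcal{V}_{3,4}$.

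The reverse inclusion is the main point. Let $\mathcal{S}$ denote the spectrahedron defined by $K_\l\succeq 0$ together with \eqref{Eq:affine}, and consider the cone $\widehat{\mathcal{S}}\subseteq \mbox{Sym}_4\R^3$ obtained by dropping the affine constraint. The cone $\widehat{\mathcal{V}}_{3,4}$ over the Veronese orbitope has dual cone $\widehat{\mathcal{V}}_{3,4}^\circ$ equal to the cone of ternary quartics $f$ with $f(v)\geq 0$ for all $v\in\R^3$, that is, the cone of non-negative ternary quartics. By \emph{Hilbert's theorem}, which is the exceptional fact that makes $(d,2n)=(3,4)$ special, every such $f$ is a sum of squares $f=\sum_i p_i^2$ of ternary quadratics. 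For any $\l\in\widehat{\mathcal{S}}$, writing $h_i$ for the coefficient vector of $p_i$ and applying the identity above yields
\[
   \langle q_f, q_\l\rangle \;=\; \sum_i h_i^T K_\l h_i \;\geq\; 0.
\]
Hence $\widehat{\mathcal{S}}$ lies in the dual of $\widehat{\mathcal{V}}_{3,4}^\circ$, which by biduality for closed convex cones equals $\widehat{\mathcal{V}}_{3,4}$. Intersecting with the hyperplane \eqref{Eq:affine} gives $\mathcal{S}\subseteq \mathcal{V}_{3,4}$, completing the proof.

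The main obstacles are the two bookkeeping lemmas: verifying the matrix identity $h^T K_\l h=\langle q_\l,p^2\rangle$ with the correct multinomial normalization, and then invoking Hilbert's theorem at the single step where sums of squares are needed. Everything else is formal duality for closed convex cones and the standard fact that $\nu_4(\sphere^2)$ lies in the affine hyperplane cut out by $(x^2+y^2+z^2)^2=1$.
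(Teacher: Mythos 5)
Your proof is correct and follows essentially the same route as the paper, which compresses the whole duality chain — Hilbert's theorem for ternary quartics, the Gram identity $h^TK_\l h=\langle q_\l,p^2\rangle$ identifying $\{K_\l\succeq 0\}$ with the dual of the sums-of-squares cone, and biduality of closed cones — into a citation of Reznick's book together with the observation about the affine equation. One small quibble: your parenthetical reason for the Gram identity is off, since $\binom{4}{\alpha}$ is not the number of pairs of quadratic monomials multiplying to $x^\alpha$; the identity holds simply because the weight $\binom{4}{\alpha}$ in the pairing cancels against the $\binom{4}{\alpha}$ appearing in the coordinates \eqref{Eq_Ter_Quartic} of $p^2$, leaving $\sum_{\beta,\gamma}h_\beta h_\gamma\l_{\beta+\gamma}$.
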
   
  
\begin{proof} It is shown in~\cite[Ch.~5]{Rez}    
that the quartic $q_\l$ is non-negative if and  
 only if the Hankel matrix  $K_\l$ is positive semidefinite.  
 Furthermore, the equation~\eqref{Eq:affine} is the affine   
equation  $(x^2+y^2+z^2)^2 = 1$ which defines  
 the hyperplane containing the orbit  $\nu_4(\sphere^2)$.  
\end{proof}  
  
By contrast, the Veronese coorbitope is not a spectrahedron.  
  
\begin{theorem}\label{Th:SOS}  
  The convex cone of non-negative ternary quartics  
$\mathcal{K}_{3,4} = \widehat{\mathcal{V}}_{3,4}^\circ$ is not a  
  spectrahedron.  
  Its facets have dimension twelve and the intersection of any two facets is an  
  exposed face of dimension nine.  
  It also has maximal non-exposed faces of dimension nine.  
\end{theorem}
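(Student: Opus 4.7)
The plan is to exploit convex duality. The extreme rays of the dual cone $\widehat{\mathcal{V}}_{3,4}$ are $\R_{\geq 0}\nu_4(v)$ for $v\in\R\PP^2$, so every $\ell\in\widehat{\mathcal{V}}_{3,4}$ is of the form $\int\nu_4(u)\,d\mu(u)$ for a non-negative measure $\mu$ on $\R\PP^2$, and every exposed face of $\mathcal{K}_{3,4}$ has the form $F_S:=\{q\in\mathcal{K}_{3,4}:q|_S=0\}$ for some $S\subseteq\R\PP^2$. The extreme ray $\nu_4(v)$ exposes $F_v:=F_{\{v\}}$; since $q\geq 0$ with $q(v)=0$ makes $v$ a local minimum one has $\nabla q(v)=0$, and conversely by Euler's relation, so $F_v$ is cut out from $\mathcal{K}_{3,4}$ by three independent linear equations. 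Taking $q=(x^2+y^2)(x^2+y^2+z^2)$ at $v=[0{:}0{:}1]$ (whose zero locus in $\R\PP^2$ is $\{v\}$ with positive-definite Hessian) as a relative interior point gives $\dim F_v=12$, and these are the maximal proper faces of $\mathcal{K}_{3,4}$. For distinct $v,w\in\R\PP^2$, the functional $\nu_4(v)+\nu_4(w)$ exposes $F_v\cap F_w$; placing $v=[1{:}0{:}0]$ and $w=[0{:}1{:}0]$ shows that the six conditions $\nabla q(v)=\nabla q(w)=0$ involve disjoint sets of coefficients of $q$, hence are independent, so $\dim(F_v\cap F_w)=9$.

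The main novelty is producing non-exposed $9$-dimensional faces by a tangent limit. Fix $v\in\R\PP^2$ and a nonzero tangent direction $\xi\in T_v\R\PP^2$, and let $F_{v,\xi}\subseteq F_v$ consist of those quartics whose projective Hessian $H(q)(v)$ annihilates $\xi$. In coordinates with $v=[0{:}0{:}1]$ and $\xi=\partial_{x'}$ this adds the two linear equations $c_{202}=c_{112}=0$ to the three defining $F_v$. The face property is immediate from convexity: if $\tfrac12(q_1+q_2)\in F_{v,\xi}$, then each Hessian $H(q_i)(v)\succeq 0$ (local minimum), and $\xi^T(H(q_1)(v)+H(q_2)(v))\xi=0$ with both summands non-negative forces $H(q_i)(v)\xi=0$. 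A hidden sixth linear equation $c_{301}=0$ comes for free from non-negativity, since the restriction of $q\in F_{v,\xi}$ to the line $\{y=0\}$ is the binary form $c_{400}x^4+c_{301}x^3z$, whose odd cubic term must vanish. The quartic $q_0=x^4+y^4+y^2z^2$ lies in $F_{v,\xi}$ with $\mathcal{Z}(q_0)=\{v\}$, and its generic perturbations within the codimension-$6$ subspace remain non-negative, yielding $\dim F_{v,\xi}=9$.

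Non-exposedness follows from the dual description: any $\ell\in\widehat{\mathcal{V}}_{3,4}$ vanishing on $F_{v,\xi}$ arises from a non-negative measure supported in $\bigcap_{q\in F_{v,\xi}}\mathcal{Z}(q)=\{v\}$ (using $q_0$), so $\ell\propto\nu_4(v)$ and exposes the strictly larger $F_v$. The same measure-support argument shows that any face $F\supsetneq F_{v,\xi}$ sits inside $F_v$; analyzing the face lattice of $F_v$ using the decomposition $q=z^2p+zr+s$ (with $p,r,s$ forms in $x,y$ of degrees $2,3,4$) shows that the two Hessian-annihilation conditions cannot be partially relaxed without also releasing the implicit equation $c_{301}=0$, so $F_{v,\xi}$ is maximal non-exposed. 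Existence of a non-exposed face proves $\mathcal{K}_{3,4}$ is not a spectrahedron, since every face of a spectrahedron is exposed. The main obstacle I anticipate is this maximality statement: ruling out non-exposed intermediate faces of dimension $10$ or $11$ between $F_{v,\xi}$ and $F_v$ requires a careful stratification of the face lattice of $F_v$, which I expect to handle by an inductive analysis using the decomposition above and known properties of non-negative binary quartics.
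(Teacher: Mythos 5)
Your argument is essentially the paper's proof: the same duality (the dual cone is generated by the tensors $\nu_4(p)$, so exposed faces are exactly the sets of non-negative quartics vanishing on a prescribed subset of $\R\PP^2$), the same three linear conditions per zero giving $12$-dimensional facets $F_v$ and $9$-dimensional intersections $F_v\cap F_w$, the same two Hessian-degeneracy conditions plus the ``hidden'' vanishing of the cubic coefficient along the degenerate direction, and the same mechanism for non-exposedness (the common zero locus of $F_{v,\xi}$ is $\{v\}$, so the only functionals vanishing on it are multiples of $\nu_4(v)$, which expose all of $F_v$). Your explicit witnesses $(x^2+y^2)(x^2+y^2+z^2)$ and $q_0=x^4+y^4+y^2z^2$ check out and make the dimension counts concrete; the paper instead realizes your $F_{v,\xi}$ as the limit of the exposed faces $F_v\cap F_{v'}$ as $v'\to v$ along the line determined by $\xi$. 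All of the steps you actually carry out are correct.

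The one genuine gap is the one you flag yourself: maximality of $F_{v,\xi}$ among non-exposed faces. You do not need an inductive stratification or special facts about binary quartics; the missing ingredient is the paper's characterization of the relative interior of a facet: $q\in F_v$ lies in the relative interior of $F_v$ if and only if $v$ is the only zero of $q$ in $\R\PP^2$ and the Hessian of $q$ at $v$ is positive definite. (One direction is exactly your perturbation argument for $q_0$; for the other, if $q$ has a second zero $v'$ one perturbs by a quartic in the span of $F_v$ that is negative at $v'$, and if the Hessian kills $\xi$ one perturbs by $-\ell_{v}^2\,\xi^2$ in suitable coordinates.) Consequently every point of the relative boundary of $F_v$ lies in some $F_v\cap F_{v'}$ or some $F_{v,\xi'}$, and since a face is contained in any face containing one of its relative interior points, every proper face of $F_v$ is contained in one of these $9$-dimensional faces; in particular $F_v$ has no proper faces of dimension $10$ or $11$. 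Now suppose $G$ is a face with $F_{v,\xi}\subsetneq G$. Your zero-locus argument forces $G\subseteq F_v$ (any proper face lies in some facet $F_w$, and then $w\in\bigcap_{q\in F_{v,\xi}}\mathcal{Z}(q)=\{v\}$). If $G\neq F_v$, then $G$ lies in one of the $9$-dimensional faces $F''$ above, so $F_{v,\xi}\subseteq F''$ with both faces $9$-dimensional; a face contained in a face of equal dimension coincides with it, whence $F''=F_{v,\xi}$ and $G=F_{v,\xi}$, a contradiction. So the only faces strictly containing $F_{v,\xi}$ are the exposed facet $F_v$ and the whole cone, which is the asserted maximality. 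With this insertion your proof is complete and matches the paper's.
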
   
  
\begin{proof}  
 We thank Greg Blekherman who explained this to us.  
 The cone $\mathcal{K}_{3,4}$ is full-dimensional in the $15$-dimensional   
space  $\mbox{Sym}_{2n}(\R^d)^*$.  
 Its facets come from its defining linear inequalities  
\[  
   \mathcal{K}_{3,4}\ =\  
    \bigl\{  
q\in\mbox{Sym}_{2n}(\R^d)^*\mid q(p)\geq 0\quad \forall p\in\R\PP^2  
\bigr\}\,.  
\]  
 For $p\in\R\PP^2$, let $F^p$ be the facet exposed by the inequality   
 $q(p)\geq 0$, which consists of those non-negative ternary quartics $q$  
 that vanish at $p$.  Since the boundary of   
$\mathcal{K}_{3,4}$ is $14$-dimensional and we have a  
 two-dimensional family of isomorphic   
facets, we see that each facet $F^p$ is  
 $12$-dimensional. A non-negative form that vanishes at $p\in\R\PP^2$  
 must also have its two partial derivatives (in local coordinates at $p$)  
 vanish, which gives three linear conditions on the facet $F^p$.  
 Concretely, if we take   
$p=[0:0:1]$ with local coordinates $x,y$, then the  
 constant and linear terms of the inhomogeneous   
quartic $q(x,y)$ must vanish.  
 Consequently,   
 \begin{equation}\label{Eq:homog_decomp}  
  q(x,y)\ =\ H(x,y)+C(x,y)+Q(x,y)\,,  
 \end{equation}  
 where $H$, $C$, and $Q$ are, respectively, the   
terms of degrees $2$, $3$, and $4$ in  
 $q$. These are binary forms. Their $3+4+5=12$   
coefficients parametrize the linear span of the facet $F^p$, showing again that $F^p$ is  
12-dimensional.   
 The quadratic form $H(x,y)$ is the Hessian of $q(x,y)$ at   
$p=[0:0:1]$ in these coordinates.  
  
 A form $q$ lies in the   
relative interior of the facet $F^p$ if and only if,  
 given any $q'$ which vanishes at $p$ along with its   
partial derivatives, so that it lies in the linear span of $F^p$, there  
 is an $\epsilon>0$ such that $q+\epsilon q'$ also lies in $F^p$.  
These conditions are equivalent to   
\begin{enumerate}  
 \item $q$ has no other zeros in $\R\PP^2$, and   
 \item the Hessian of $q$ at $p$ is a positive definite quadratic form.  
\end{enumerate}  
 A form $q\in F^p$ lies in the boundary of $F^p$ when one of   
these conditions fails, that is, either  
\begin{enumerate}  
 \item $q$ vanishes at a second point $p'\in\R\PP^2\setminus\{p\}$, or  
 \item the Hessian of $q$ has a double root at some point $r\in\R\PP^2$.  
\end{enumerate}  
 Faces of type (1) have the form $F^p\cap F^{p'}$.  
 These are nine-dimensional and occur in a four-dimensional   
family parametrized  
 by pairs of distinct points in $\R\PP^2$.  
 The union of all such faces  is a semialgebraic subset  
of dimension $9+4=13$ in the boundary of  
 $\mathcal{K}_{3,4}$.

 Faces of type (2) also have dimension nine.  
 The condition that the Hessian has a double   
root at a point $r\in\R\PP^1$ gives  
 two linear conditions on the coefficients of $q$.  
 There is an additional condition that the cubic part $C$ of  
 $q$ in \eqref{Eq:homog_decomp} also   
vanishes at $r$, for otherwise $q$ takes  
 negative values along the line through $p$ corresponding to $r$.  
 A face of type (2) is the limit of faces   
$F^p\cap F^{p'}$ of type (1) as  
 $p'$ approaches $p$ along the line corresponding to $r$.  
 These faces form a three-dimensional family on which $SO(3)$ acts   
faithfully and transitively.   
   
 Let us now examine the exposed faces of $\mathcal{K}_{3,4}$.  
 Let $\ell\in \widehat{\mathcal{V}}_{3,4}$ be a symmetric tensor in the  
 dual cone to $\mathcal{K}_{3,4}$.  
 Then $\ell$ is the sum of decomposable symmetric tensors,  
\[  
    \ell\ =\ \nu_4(p_1)+\dotsb+\nu_4(p_s)\,,  
\]  
 and so it supports the face $F^{p_1}\cap \cdots \cap F^{p_s}$.  
 Thus the exposed faces of $\mathcal{K}_{3,4}$ are intersections of  
 facets, and they consist   
of non-negative ternary quartics that vanish at a  
 given set of points.  
  
 Since the faces of type (2) are not of this form, they are not exposed.  
\end{proof}  
  
Our next agenda item is a discussion of  
the algebraic boundaries of the convex bodies and cones  
discussed above.  
The algebraic boundary of the cone $\widehat{\mathcal{V}}_{3,4}$ is   
characterized by the vanishing of the  
determinant of  the Hankel matrix $K_\l$ in~\eqref{Eq:Reznick_Hankel}.  
From this we conclude:  
  
\begin{corollary}  
The algebraic boundary of the   
 Veronese orbitope $\mathcal{V}_{3,4}$  
is the variety of dimension $13$ and degree six  
which is defined by  
the linear equation~\eqref{Eq:affine} and     
the Hankel determinant $\,{\rm det}(K_\l) = 0$.  
The extreme points of $\mathcal{V}_{3,4}$  
are precisely the Hankel matrices $K_\l$ of rank $1$.  
\end{corollary}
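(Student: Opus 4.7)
The plan is to combine the spectrahedral representation of $\mathcal{V}_{3,4}$ from the preceding theorem with the standard principle that the algebraic boundary of a spectrahedron $\{A(\l) \succeq 0\}\cap H$ is governed by the vanishing of $\det A(\l)$. First I would note that the affine hyperplane $H \subset \Sym_4\R^3$ cut out by~\eqref{Eq:affine} has dimension $14$, and that $\mathcal{V}_{3,4}$ is a full-dimensional convex body inside $H$. A point $\l \in H$ lies on the topological boundary of $\mathcal{V}_{3,4}$ precisely when the Hankel matrix $K_\l$ is positive semidefinite but singular. Hence $\partial \mathcal{V}_{3,4} \subseteq \{\l \in H : \det K_\l = 0\}$, so the algebraic boundary $\partial_a \mathcal{V}_{3,4}$ is contained in this determinantal variety. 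Since $K_\l$ is a $6 \times 6$ symmetric matrix whose entries are linear in $\l$, the polynomial $\det K_\l$ has degree $6$ in the $15$ coefficients $\l_\alpha$, so the containing hypersurface in $H$ has dimension $13$ and degree at most~$6$.

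The crux of the argument is to show that $\det K_\l$ restricts to an irreducible polynomial on $H$; once this is in hand, $\partial_a \mathcal{V}_{3,4}$ equals the entire determinantal variety and its degree is exactly~$6$. This is where I expect the main obstacle. One route is to invoke the classical fact that the catalecticant hypersurface $\{\det K_\l = 0\} \subset \Sym_4\R^3$ is an irreducible hypersurface of degree $6$, parametrizing ternary quartics with a nontrivial apolar quadric. The linear form defining $H$ is not a factor of $\det K_\l$, and $H$ is not contained in the catalecticant hypersurface (a generic element of $H$ has a full-rank Hankel matrix), so the intersection with $H$ remains an irreducible hypersurface of dimension $13$. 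A more hands-on alternative is to exhibit a smooth point on the rank-five locus and use a dimension count on the rank stratification $\{\mathrm{rk}\, K_\l \le r\}$, together with connectedness of the strata, to rule out a nontrivial factorization of the restricted polynomial.

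For the characterization of extreme points, the argument is short and independent of the boundary analysis. Since $\mathcal{V}_{3,4} = \conv(\nu_4(\sphere^2))$ is the convex hull of a compact set, every extreme point of $\mathcal{V}_{3,4}$ must lie in $\nu_4(\sphere^2)$. Conversely, the proposition at the start of Section~\ref{sec:Setup} (every point of an orbit under a compact orthogonal action lies on a sphere and is therefore exposed) shows that every point of $\nu_4(\sphere^2)$ is an extreme point of $\mathcal{V}_{3,4}$. Finally, the orbit $\nu_4(\sphere^2)$ consists precisely of the tensors $\nu_4(v) = v^{\otimes 4}$ for $v \in \sphere^2$, and under the identification~\eqref{Eq_Ter_Quartic} these correspond exactly to the positive semidefinite Hankel matrices $K_\l$ of rank one satisfying~\eqref{Eq:affine}.
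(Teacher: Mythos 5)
Your proposal is correct and follows essentially the same route as the paper, which states this corollary without proof as an immediate consequence of the preceding spectrahedral representation; you have simply supplied the details the paper leaves implicit (irreducibility of the catalecticant, Zariski density of the $13$-dimensional boundary in it, and Minkowski's theorem plus the exposedness of orbit points for the extreme-point claim). The one step worth phrasing more carefully is your assertion that the restriction of $\det K_\l$ to $H$ "remains irreducible" because $H$ is not contained in the catalecticant hypersurface --- for a genuine hyperplane section this implication can fail --- but it is valid here because \eqref{Eq:affine} has the form $L(\l)=1$, so $H$ is an affine chart and the restricted polynomial is the dehomogenization of the irreducible degree-six catalecticant with respect to $L$, which stays irreducible of degree six precisely because $L$ is not a factor.
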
  
  
We observed in the proof of Theorem~\ref{Th:SOS} that the boundary of $\mathcal{K}_{3,4}$  
consists of non-negative quartics $q$ that vanish at some point $p$ of $\R\PP^2$, and that the  
partial derivatives of $q$ necessarily also vanish at $p$.  
That is, the plane quartic curve defined by $q=0$ is singular at $p$.  
Thus the algebraic boundary of $\mathcal{K}_{3,4}$ consists of singular   
ternary quartics.  
Working in $\PP(\mbox{Sym}_4\C^3)\simeq\PP^{14}$,  
and its dual space of ternary quartics, this algebraic  
boundary is seen to be the dual variety to the Veronese surface   
which consists of rank $1$ Hankel matrices $K_\l$.  
  
\begin{corollary} \label{cor:discrorbi}  
The algebraic boundary of the   
coorbitope cone $\mathcal{K}_{3,4}$  
is an irreducible hypersurface of degree $27$.  
Its defining polynomial is the discriminant $\Delta_q$  
of the ternary quartic  
\begin{multline*}   
  \quad q(x,y,z) \,\, = \,\,  
  c_{400}   x^4  +  c_{310}   x^3 y +  c_{301}   x^3 z  +    
  c_{220}   x^2 y^2 +  c_{211}   x^2 y z +  c_{202}   x^2 z^2 +    
  c_{130} x y^3 \\ \qquad +  c_{121}   x y^2 z    +  c_{112}   x y z^2 +   
 c_{103}   x z^3 +    
  c_{040} y^4 + c_{031} y^3 z + c_{022} y^2 z^2 + c_{013} y z^3 + c_{004}   z^4  \,.  
  \quad   
\end{multline*}   
\end{corollary}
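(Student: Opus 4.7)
The plan is to combine the facial analysis from the proof of Theorem~\ref{Th:SOS} with the classical theory of the discriminant of a ternary form. From that proof I already know that any quartic $q$ on the boundary of $\mathcal{K}_{3,4}$ must vanish together with both partial derivatives at some point $p\in\R\PP^2$, so that the projective plane curve $\{q=0\}$ is singular at $p$. Consequently every boundary point satisfies $\Delta_q=0$, which gives the inclusion of real algebraic varieties
$$
   \partial_a\mathcal{K}_{3,4}\ \subseteq\ \{\Delta_q=0\}\ \subset\ \Sym_4(\R^3)^*\,.
$$

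Next I would invoke the classical fact that the discriminant of a ternary form of degree $d$ is an irreducible polynomial in its coefficients of degree $3(d{-}1)^2$. This is a special case of the general Boole formula $(n{+}1)(d{-}1)^n$ for the degree of the discriminant of a degree-$d$ hypersurface in $\PP^n$. For $n=2$ and $d=4$ it gives $3\cdot 3^2=27$, so the complex hypersurface cut out by $\Delta_q$ is irreducible of degree $27$ and coincides (up to scalar) with the projective dual of the Veronese surface, as already remarked immediately after Theorem~\ref{Th:SOS}.

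To conclude, I would argue for the reverse inclusion by a dimension count. Since $\mathcal{K}_{3,4}$ is a full-dimensional convex cone in the $15$-dimensional space $\Sym_4(\R^3)^*$, its topological boundary is a nonempty semialgebraic set of pure real dimension $14$, so its Zariski closure $\partial_a\mathcal{K}_{3,4}$ is a nonempty real hypersurface. A nonempty subvariety of an irreducible hypersurface must equal it, and hence $\partial_a\mathcal{K}_{3,4}=\{\Delta_q=0\}$.

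The main obstacle, and the only nontrivial input beyond what has already been developed in this section, is the irreducibility and degree of $\Delta_q$; rather than reprove this I would cite it from the standard literature on discriminants of plane curves. Everything else is a direct synthesis: the containment comes from Theorem~\ref{Th:SOS}, and the equality is forced by irreducibility together with the fact that the algebraic boundary of a full-dimensional convex cone is an honest hypersurface.
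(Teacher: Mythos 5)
Your proof is correct and follows essentially the same route as the paper: containment of the boundary in the discriminant locus via the singular-point analysis from Theorem~\ref{Th:SOS}, irreducibility and degree $27$ of $\Delta_q$ cited from classical discriminant theory (the paper quotes Gel'fand--Kapranov--Zelevinsky and identifies the hypersurface as the dual variety of the Veronese surface, rather than invoking Boole's degree formula), and equality forced by the fact that the algebraic boundary of a full-dimensional cone is a $14$-dimensional hypersurface inside an irreducible one. One phrasing slip: ``a nonempty subvariety of an irreducible hypersurface must equal it'' is false as stated and should say ``a subvariety of the same dimension,'' but since your preceding sentence already establishes that $\partial_a\mathcal{K}_{3,4}$ has dimension $14$, the argument stands.
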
  
  
The discriminant $\Delta_q$ is a homogeneous polynomial  
of degree $27$ in the $15$ indeterminates $c_{ijk}$.  
In what follows we shall present an explicit expression for $\Delta_q$.  
That expression will be derived   
from a beautiful classical formula due to Sylvester  
which can be found in Section 3.4.D,  
starting on page 118, of the book by Gel$'$fand,  
Kapranov, and Zelevinsky~\cite{GKZ}.  
% We note that our convention for dual and primal spaces is opposite   
% to that in~\cite{GKZ}.  
  
According to \cite[Prop.~1.7, page 434]{GKZ}, the   
discriminant $\Delta_q$ is proportional to the resultant $R_3(q_x,q_y,q_x)$   
of the three partial derivatives of the quartic $q$. Here $R_3$  
denotes the resultant of three ternary cubics, and the precise relation is  
 $\, \Delta_q \,= \, 4^{-7} \cdot R_3(q_x,q_y,q_x)$.  
  
We write $(\R^3)^*$ for the space of linear forms on   
$\R^3$, and we introduce the linear map  
 \begin{equation}\label{Eq:first_map}  
   T\ \colon\ (\R^3)^*\oplus(\R^3)^*\oplus(\R^3)^*\ \longrightarrow\   
    \mbox{Sym}_4(\R^3)^* \,, \,\,(f,g,h)\mapsto f q_x + g q_y + h q_z.   
 \end{equation}  
Next, for an exponent vector $\alpha=(\alpha_2,\alpha_2,\alpha_3)$ of degree  
$\alpha_1+\alpha_2+\alpha_3=2$ and any variable $t\in\{x,y,z\}$,   
we choose a decomposition of the cubic partial derivative   
 \begin{equation}\label{Eq:decomp}  
   q_t\ =\ x^{\alpha_1+1}P^{(t)}_\alpha\,+\,  
           y^{\alpha_2+1}Q^{(t)}_\alpha\,+\,  
           z^{\alpha_3+1}R^{(t)}_\alpha\,,  
 \end{equation}  
where $P^{(t)}_\alpha$, $Q^{(t)}_\alpha$, and $R^{(t)}_\alpha$ are forms   
of degree $2-\alpha_1$, $2-\alpha_2$, and $2-\alpha_3$, respectively. Then  
\[  D_\alpha\ =\ \det\left( \begin{matrix}  
     P^{(x)}_\alpha & Q^{(x)}_\alpha & R^{(x)}_\alpha \\  
     P^{(y)}_\alpha & Q^{(y)}_\alpha & R^{(y)}_\alpha \\  
     P^{(z)}_\alpha & Q^{(z)}_\alpha & R^{(z)}_\alpha \end{matrix}\right)\,\]  
is a quartic polynomial.  
Finally, we  define a linear map   
$\,D\colon \mbox{Sym}_2\R^3 \to\mbox{Sym}_4(\R^3)^* $  
by sending $\delta_\alpha\mapsto D_\alpha$, where $\{\delta_\alpha\}$ is the basis dual to the  
monomial basis of $\mbox{Sym}_2(\R^3)^*$.  
  
%%%%%%%%%%%%%%%%%%%%%%%%%%%%%%%%%%%%%%%%%%%%%%%%%%%%%%%%%%%%%%%%%%%%%%%%%%%%%%%%%%%%%%%%%%%   
\begin{proposition} {\rm (Sylvester \cite[\S3.4.D]{GKZ})}  
 The discriminant $\Delta_q$ is proportional to the  
 resultant of the ternary cubics  
$q_x,q_y,q_z$ which is equal to the determinant of the linear map  
\[  
   T\oplus D\ \colon\   
   (\R^3)^*\oplus(\R^3)^*\oplus(\R^3)^*\oplus\mbox{\rm Sym}_2(\R^3)^*\   
   \longrightarrow\ \mbox{\rm Sym}_4(\R^3)^*\,.  
\]  
This is an irreducible homogeneous polynomial of degree $27$ in the $15$ coefficients $c_{ijk}$.  
\end{proposition}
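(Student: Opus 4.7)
The plan is to build on the already cited identity $\Delta_q = 4^{-7} R_3(q_x, q_y, q_z)$ from \cite[Prop.~1.7]{GKZ} and show that the Macaulay resultant $R_3(q_x, q_y, q_z)$ is proportional to $\det(T \oplus D)$. Since the source and target of $T \oplus D$ both have dimension $3\cdot 3 + 6 = 15$, the determinant is a well-defined polynomial in the coefficients $c_{ijk}$, and the argument splits into a set-theoretic vanishing check, a degree count, and an irreducibility statement.

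First I would check that $\det(T \oplus D)$ vanishes on the resultant hypersurface. Suppose $q_x, q_y, q_z$ share a common zero $p \in \mathbb{P}^2_{\mathbb{C}}$. Every element of $\mathrm{Im}(T)$ lies in the ideal $\langle q_x, q_y, q_z \rangle$ and so vanishes at $p$. For each $D_\alpha$, evaluating \eqref{Eq:decomp} at $p$ and using $q_t(p)=0$ yields three relations
\[
   x(p)^{\alpha_1+1} P^{(t)}_\alpha(p)
   + y(p)^{\alpha_2+1} Q^{(t)}_\alpha(p)
   + z(p)^{\alpha_3+1} R^{(t)}_\alpha(p) \,=\, 0
   \qquad (t \in \{x,y,z\}),
\]
which express a non-trivial linear dependence among the columns of the $3 \times 3$ matrix whose determinant defines $D_\alpha(p)$; the weight vector $(x(p)^{\alpha_1+1}, y(p)^{\alpha_2+1}, z(p)^{\alpha_3+1})$ is nonzero since $p\neq 0$. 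Hence $D_\alpha(p) = 0$ for every $\alpha$, the image of $T \oplus D$ is trapped in the hyperplane of quartics vanishing at $p$, and $\det(T \oplus D)(q) = 0$.

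Next, a degree count. The entries of the matrix of $T$, expressed in the monomial basis of $\mathrm{Sym}_4(\mathbb{R}^3)^*$, are linear in the $c_{ijk}$ (because $q_x, q_y, q_z$ are), filling $9$ columns of degree $1$. Each entry of $D$ is a coefficient of the quartic $D_\alpha$, itself a $3 \times 3$ determinant in entries linear in the $c_{ijk}$; so the $6$ columns of $D$ carry entries of degree $3$. Therefore $\det(T \oplus D)$ is homogeneous of total degree $9\cdot 1 + 6\cdot 3 = 27$, matching the known degree of $R_3(q_x, q_y, q_z)$. The irreducibility of $\Delta_q$ as a polynomial of degree $27$ in $15$ variables is classical: it is the defining equation of the dual variety of the Veronese surface $\nu_4(\mathbb{P}^2) \subset \mathbb{P}^{14}$, which is an irreducible hypersurface of the expected dimension (see \cite[Ch.~1]{GKZ}). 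Combining set-theoretic agreement with equal degree and irreducibility forces proportionality of $R_3(q_x, q_y, q_z)$ and $\det(T \oplus D)$; the constant can be pinned down by specializing to a smooth quartic such as $x^4+y^4+z^4$ where both sides are easily seen to be nonzero.

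The main bookkeeping obstacle is verifying that $D$ is well-defined independently of the non-unique choices of the decompositions~\eqref{Eq:decomp}. One shows by a direct cofactor-expansion argument that any two decompositions of $q_t$ differ by a syzygy which contributes a vanishing term in the $3\times 3$ determinant; after this, the proportionality statement and the degree/irreducibility assertions combine to give the proposition exactly as stated.
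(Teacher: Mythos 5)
The paper offers no proof of this proposition: it is quoted as a classical result of Sylvester via \cite[\S 3.4.D]{GKZ}, so there is nothing of the authors' to compare against, and your blind argument must stand on its own. It does. The three pillars are all sound: (i) the vanishing step is correct, since at a common zero $p\in\PP^2_{\C}$ of $q_x,q_y,q_z$ the nonzero vector $\bigl(x(p)^{\alpha_1+1},y(p)^{\alpha_2+1},z(p)^{\alpha_3+1}\bigr)$ is a right null vector of the matrix defining $D_\alpha$, so the whole image of $T\oplus D$ lands in the hyperplane of quartics vanishing at $p$ and the $15\times 15$ determinant dies; (ii) the degree count $9\cdot 1+6\cdot 3=27$ matches $\deg R_3=3\cdot 3^2=27$; (iii) irreducibility of $\Delta_q$ (as the equation of the dual variety of the Veronese surface), divisibility, and a nonvanishing specialization force proportionality. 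The Fermat quartic is indeed a good witness: with the obvious decompositions of $4x^3,4y^3,4z^3$, the six $D_\alpha$ are, up to the scalar $64$, exactly the six degree-four monomials missing from the image of $T$, so $\det(T\oplus D)\neq 0$ there.

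The one inaccuracy is in your closing paragraph on well-definedness. Altering the decomposition \eqref{Eq:decomp} does not contribute ``a vanishing term'' to the $3\times 3$ determinant. For instance, adding the Koszul syzygy $s\cdot(y^{\alpha_2+1},-x^{\alpha_1+1},0)$ to the row of $q_x$ changes $D_\alpha$ by $s\bigl(R^{(z)}_\alpha q_y-R^{(y)}_\alpha q_z\bigr)$, which is generally nonzero but lies in the image of $T$; thus only $\det(T\oplus D)$, not each individual $D_\alpha$, is independent of the choices, and the invariance is seen by column operations on the $15\times 15$ matrix rather than inside each $3\times 3$ block. For the proposition as stated this is harmless---a decomposition is fixed once and for all, and your vanishing/degree/irreducibility argument goes through verbatim for any fixed choice---but if you want to assert well-definedness of $D$ you should phrase and prove it modulo $\mathrm{Im}(T)$.
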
  
  
We write this map explicitly as a $15\times 15$ matrix $\mathcal{D}(q)$   
whose rows are indexed by the $15$ monomials $x^iy^jz^k$ of degree   
$i+j+k=4$ and whose columns are indexed by  
$15$ auxiliary quartics, and whose entry in a given row   
and column is the coefficient of that  monomial in that auxiliary quartic.   
Nine of the quartics come from the map $T$ in~\eqref{Eq:first_map}.  
They are  
\[   xq_x\,,\  yq_x\,,\  zq_x\,,\   
   xq_y\,,\  yq_y\,,\  zq_y\,,\   
   xq_z\,,\  yq_z\,,\  zq_z\,.  
\]  
The other six are the polynomials   
$D_{002}$, $D_{020}$, $D_{200}$, $D_{110}$, $D_{101}$, and $D_{011}$   
from $D$.  
We  only describe $D_{002}$ and $D_{110}$ as the others may be recovered  
from these by symmetry.   
  
For $D_{002}$, note that each partial derivative of $q$   
has six terms divisible by $x$, three divisible by $y$  
and not $x$, and a unique term involving $z^3$.  
This leads to a decomposition~\eqref{Eq:decomp}, and   
$D_{002}$ is the determinant of the $3\times 3$ matrix: 
\[  
  \begin{bmatrix}  
    4c_{400}x^2 + 3c_{310}xy   
+ 3c_{301}xz + 2c_{220}y^2 + 2c_{211}yz + 2c_{202}z^2 \!&\!  
     c_{130}y^2 +  c_{121}yz +  c_{112}z^2 \!&\! c_{103} \\  
   \!  
     c_{310}x^2 + 2c_{220}xy +  c_{211}xz + 3c_{130}y^2 + 2c_{121}yz +  c_{112}z^2 \!&\!  
    4c_{040}y^2 + 3c_{031}yz + 2c_{022}z^2 \!&\! c_{013} \\  
   \!  
     c_{301}x^2 +  c_{211}xy + 2c_{202}xz +  c_{121}y^2 + 2c_{112}yz + 3c_{103}z^2 \!&\!  
     c_{031}y^2 + 2c_{022}yz + 3c_{013}z^2 \!&\! 4c_{004}  
  \end{bmatrix}  
\]  
By a similar reasoning, we find that  
 $D_{110}$ is the determinant of the   
$3\times 3$ matrix:  
\[  
  \begin{bmatrix}  
    4c_{400}x  + 3c_{310}y  + 3c_{301}z \!&\!  
    2c_{220}x  +  c_{130}y  +  c_{121}z \!&\!  
    2c_{211}xy + 2c_{202}xz +  c_{112}yz +  c_{103}z^2  
   \\  
     c_{310}x  + 2c_{220}y  +  c_{211}z  \!&\!  
    3c_{130}x  + 4c_{040}y  + 3c_{031}z  \!&\!  
    2c_{121}xy +  c_{112}xz + 2c_{022}yz +  c_{013}z^2  
   \!\\  
     c_{301}x  +  c_{211}y  + 2c_{202}z  \!&\!  
     c_{121}x  +  c_{031}y  + 2c_{022}z   \!&\!  
    2c_{112}xy + 3c_{103}xz + 3c_{013}yz + 4c_{004}z^2      
  \end{bmatrix}  
\]  
This concludes our discussion of the  
algebraic boundary of the coorbitope  
cone $\mathcal{K}_{3,4}$.  
  
We close with the remark that the notations $\mathcal{K}_{\bdt,\bdt} $   
and $ \mathcal{C}_{\bdt,\bdt}$ are consistent with those  
used in the paper \cite{SU}    
where these cones consist of concentration matrices   
and sufficient statistics of a certain  Gaussian model.

\section{Grassmann Orbitopes}  
  
The {\em Grassmann orbitope} $\mathcal{G}_{d,n}$ is the convex hull  
of the Grassmann variety of oriented $d$-dimensional linear subspaces of $\R^n$  
in its Pl\"ucker embedding in the unit sphere in $\wedge_d \R^n$.  
Equivalently, this is the highest weight orbitope for the group $SO(n)$  
acting  on   $\wedge_d \R^n$:  
$$ \qquad \qquad \mathcal{G}_{d,n} \,\, = \,\, {\rm conv}( SO(n)   
\cdot e_{12\cdots d}) \qquad  
\hbox{where $\,e_{12 \cdots d} \,=\, e_1 \wedge e_2 \wedge \cdots \wedge e_d \, \in \, \wedge_d \R^n$} . $$  
Faces of the Grassmann orbitope are of considerable interest in differential geometry since,  
according to the Fundamental Theorem of Calibrations, they correspond to  
area-minimizing $d$-dimensional submanifolds of $\R^n$. References to this  
subject include the  
seminal article on calibrated geometries by Harvey and Lawson \cite{HL1982}   
and the beautiful expositions by Morgan \cite{HarMor86, Mor85}.  
In this section we review basic known facts about $\mathcal{G}_{d,n}$ and we   
initiate its study  
from the perspectives of combinatorics, semidefinite programming, and  
algebraic geometry.  
  
Vectors in $\wedge_d \R^n$ are written in terms of Pl\"ucker coordinates  
relative to the standard basis:  
$$ p \,\,\,\,=\, \sum_{1 \leq i_1 < \cdots < i_d \leq n} \!\!\! p_{i_1 i_2 \cdots i_d} \,  
e_{i_1} \wedge e_{i_2} \wedge \cdots \wedge e_{i_d}. $$  
The Pl\"ucker vector $p$ lies in the {\em oriented Grassmann variety} if and only if it  
is decomposable, i.e. $\,p = u_1 \wedge u_2 \wedge  \cdots \wedge u_d$  
for some pairwise orthogonal subset $\{u_1,u_2,\ldots,u_d \}$ of $  
\sphere^{n-1}$.  
This happens if and only if $p$ lies in the unit sphere in $\wedge_d \R^n$  
and satisfies all quadratic Pl\"ucker relations, the relations among  
the $d {\times} d$-minors of a $d {\times} n$-matrix. These relations  
generate the prime ideal called the {\em Pl\"ucker ideal} $I_{d,n}$. 
Thus the oriented Grassmann variety is the algebraic  
subvariety of $\wedge_d \R^n$ defined by the ideal  
\begin{equation}  
\label{eq:PluckerIdeal}  
 I_{d,n} \,\,+\,\  
\bigl\langle \, 1 \, -   
  \!\!\!\! \sum_{1 \leq i_1 < \cdots < i_d \leq n}  p_{i_1 i_2 \cdots i_d}^2 \, \bigr\rangle.  
\end{equation}  
The convex hull of that real algebraic variety is the  
$\binom{n}{d}$-dimensional Grassmann orbitope~$\mathcal{G}_{d,n}$.

\begin{example}[$d{=}2, n{=}4$] \label{ex:G24}  
The Grassmann orbitope  
$\mathcal{G}_{2,4}$ is the convex hull of the variety~of  
\begin{equation}  
\label{eq:PluckerIdeal24}  
\bigl\langle \,  
p_{12} p_{34} - p_{13} p_{24} + p_{14} p_{23} \,,\,\,  
p_{12}^2 + p_{13}^2 + p_{14}^2 +   
p_{23}^2 + p_{24}^2 + p_{34}^2 -1\, \bigr\rangle.   
\end{equation}  
As suggested by \cite[Proposition~3.2]{Mor85}, we  perform the orthogonal change of
coordinates    
$$   
\begin{matrix}  
u = \frac{1}{\sqrt{2}}(p_{12}+p_{34}), &  
v = \frac{1}{\sqrt{2}}(p_{13}-p_{24}), &  
w = \frac{1}{\sqrt{2}}(p_{14}+p_{23}), \\ \rule{0pt}{14pt} 
x = \frac{1}{\sqrt{2}}(p_{12}-p_{34}), &  
y = \frac{1}{\sqrt{2}}(p_{13}+p_{24}) ,&  
z = \frac{1}{\sqrt{2}}(p_{14}-p_{23}) .  
\end{matrix}   
$$  
This is simultaneous rotation by $\pi/4$ in each of the coordinate planes
spanned by the pairs
$( p_{12},p_{34})$, 
$( p_{13},p_{24})$, and 
$( p_{14},p_{23})$.
In these new coordinates, the prime ideal (\ref{eq:PluckerIdeal24}) equals  
$$ \bigl\langle \,u^2+ v^2 + w^2 - \frac{1}{2} \, ,\,\,  
x^2+ y^2 + z^2 - \frac{1}{2} \,\,\bigr\rangle.   
$$  
This reveals that  $\mathcal{G}_{2,4}$ is the direct product of  
two three-dimensional balls of radius $1/\sqrt{2}$. \qed  
\end{example}  
  
We next examine the case $d = 2$ and arbitrary $n$.  
The vectors $p$ in $ \wedge_2 \R^n$ can be identified with skew-symmetric  
$n {\times} n$-matrices, and this brings us back to the  
orbitopes in Section 3.2.

\begin{corollary} \label{rmk:GrassInSec3}  
The Grassmann orbitope $\mathcal{G}_{2,n}$ coincides with the  
skew-symmetric Schur-Horn orbitope of a   
skew-symmetric matrix $N \in \wedge_2 \R^n$ having rank two and  
 $\Lambda(N) = (1,0,0,\ldots,0)$.  
\end{corollary}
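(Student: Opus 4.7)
The plan is to use the canonical $O(n)$-equivariant linear isomorphism
$\phi \colon \wedge_2\R^n \to \skew$ defined on decomposables by
$\phi(u \wedge v) = uv^T - vu^T$, to transport $\mathcal{G}_{2,n}$ onto the space of skew-symmetric matrices. (Under the identification $\skew = \wedge_2 \R^n$ that the paper already adopts, $\phi$ is essentially the identity up to a scalar, but making it explicit clarifies the two different descriptions of the group action.) Equivariance is a short computation: for any $g \in O(n)$,
\[
    \phi(gu \wedge gv) \,=\, (gu)(gv)^T - (gv)(gu)^T \,=\, g\bigl(uv^T - vu^T\bigr)g^T \,=\, g\,\phi(u \wedge v)\,g^T,
\]
so $\phi$ intertwines the representation $\wedge_2 g$ on $\wedge_2\R^n$ with the adjoint action $A \mapsto g A g^T$ on $\skew$.

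Next I would compute $\phi(e_1 \wedge e_2) = e_1 e_2^T - e_2 e_1^T =: N$. This is a rank-two skew-symmetric matrix whose non-zero eigenvalues are $\pm i$; equivalently, after the standard orthogonal change of basis that brings $N$ to the normal form of Subsection~3.2, we obtain $\Sdiag(N) = (1, 0, \ldots, 0)$, so $\tilde{\lambda}(N) = (1, 0, \ldots, 0)$ as required. Because $\phi$ is an $O(n)$-equivariant bijection, it sends the generating orbit $SO(n) \cdot (e_1 \wedge e_2)$ onto $SO(n) \cdot N \subseteq \skew$.

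The one remaining subtlety is that $\SH_N$ was defined as the convex hull of the full $O(n)$-orbit of $N$, while $\mathcal{G}_{2,n}$ uses only $SO(n)$. To reconcile these, I would produce an orientation-reversing element of $O(n)$ that centralizes $N$: for $n \geq 3$ the reflection $r = \mathrm{diag}(1, 1, \ldots, 1, -1) \in O(n) \setminus SO(n)$ fixes both $e_1$ and $e_2$, so $r N r^T = N$. For any $h \in O(n) \setminus SO(n)$ the product $hr$ lies in $SO(n)$ and
\[
    hNh^T \,=\, (hr)\bigl(rNr^T\bigr)(hr)^T \,=\, (hr)\,N\,(hr)^T \,\in\, SO(n)\cdot N,
\]
so $O(n)\cdot N = SO(n)\cdot N$. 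Passing to convex hulls on both sides of the bijection $SO(n)\cdot(e_1\wedge e_2) \xrightarrow{\phi} O(n)\cdot N$ yields the claimed identification $\phi(\mathcal{G}_{2,n}) = \SH_N$. The only obstacle is bookkeeping the $O(n)$ versus $SO(n)$ distinction; the case $n=2$ is degenerate but harmless, as both bodies reduce to the line segment from $-N$ to $N$.
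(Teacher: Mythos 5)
Your proof is correct and follows essentially the route the paper leaves implicit: the paper gives no written argument for this corollary, relying only on the preceding sentence's identification of $\wedge_2\R^n$ with skew-symmetric matrices, and you supply exactly the missing details --- the $O(n)$-equivariance of $\phi(u\wedge v)=uv^T-vu^T$, the computation that $N=e_1e_2^T-e_2e_1^T$ has rank two with $\tilde\lambda(N)=(1,0,\dots,0)$, and the reconciliation of the $SO(n)$-orbit defining $\mathcal{G}_{2,n}$ with the $O(n)$-orbit defining $\SH_N$ via a reflection centralizing $N$. The only blemish is your closing parenthetical: for $n=2$ the group $SO(2)$ acts trivially on the one-dimensional space $\wedge_2\R^2$, so $\mathcal{G}_{2,2}$ is the single point $e_1\wedge e_2$ while $\SH_N=\conv\{N,-N\}$ is a segment, and the two bodies actually differ; this degenerate case lies outside the corollary's intended scope, so it does not affect the argument.
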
  
  
If $p$ is a real skew-symmetric matrix whose eigenvalues  are   
$\pm i \hl_1,\ldots,\pm i \hl_k$,  
where $i = \sqrt{-1}$, then  
the matrix $\,i \cdot p\,$ is Hermitian  
and its eigenvalues are the real numbers $\pm \hl_1,\ldots,\pm \hl_k$.  
Recall that the operator $\mathcal{L}_k$ computes the  
$k$-th additive compound matrix of a given matrix.  
  
\begin{theorem} \label{thm:G2n}  
Let $n \geq 5$ and $k = \lfloor n/2 \rfloor$.  The Grassmann orbitope equals 
the spectrahedron  
\begin{equation}  
\label{eq:spectraG2n}  
 \mathcal{G}_{2,n} \,\, = \,\, \bigl\{ \, p \in \wedge_2 \R^n \,:\,  
{\rm Id}_{\binom{n}{k}} -  
\Sfunc_{k}(i \cdot p) \,\succeq \, 0 \,\bigr\}.   
\end{equation}  
Its algebraic boundary $\partial_a \mathcal{G}_{2,n}$ is an irreducible 
hypersurface of degree  $2^k$, defined by a factor of the determinant 
of the matrix ${\rm Id}_{\binom{n}{k}} - \Sfunc_{k}(i \cdot p)$.  
The proper faces of $\mathcal{G}_{2,n}$ are $SU(m)$-orbitopes for $1 
\le m \le k$. Every face $F$ is associated with an even-dimensional 
subspace $V_F$ equipped with an orthogonal complex structure and the extreme 
points of $F$ correspond to complex lines in $V_F$. 
\end{theorem}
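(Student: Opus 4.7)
The plan is to leverage Corollary~\ref{rmk:GrassInSec3}, which identifies $\mathcal{G}_{2,n}$ with the skew-symmetric Schur--Horn orbitope for the extreme parameter $\hl=(1,0,\dots,0)$. For this choice of $\hl$, the system of weak-majorization inequalities in~\eqref{eq:Bpermineq} collapses to the single constraint $\hl_1(p)+\cdots+\hl_k(p)\le 1$, where $\pm i\hl_j(p)$ are the non-zero eigenvalue pairs of $p$, since all other inequalities are implied by this one.

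For the spectrahedral description~\eqref{eq:spectraG2n}, the key observation is that $i\cdot p$ is a Hermitian $n\times n$ matrix with eigenvalues $\pm\hl_1(p),\dots,\pm\hl_k(p)$ (together with a single $0$ if $n$ is odd). The Hermitian analog of Lemma~\ref{lem:compoundeigen} gives that the eigenvalues of $\Sfunc_k(i\cdot p)$ are the $k$-fold sums of eigenvalues of $i\cdot p$, the largest of which is $\hl_1+\cdots+\hl_k$. Thus ${\rm Id}_{\binom{n}{k}}-\Sfunc_k(i\cdot p)\succeq 0$ captures exactly the defining constraint, and \eqref{eq:spectraG2n} follows. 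This is a compact complex form of Theorem~\ref{thm:skewsymspec}, with the size of the linear matrix inequality reduced from $\binom{2n}{2k}$ to $\binom{n}{k}$.

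For the algebraic boundary, $\partial_a\mathcal{G}_{2,n}$ is the Zariski closure of the locus where $\hl_1+\cdots+\hl_k=1$. Over the algebraic closure the determinant $\det({\rm Id}-\Sfunc_k(i\cdot p))$ factors into $\binom{n}{k}$ linear expressions, one per $k$-subset of the eigenvalues of $i\cdot p$. The factor cutting out the boundary is
\[
  \Phi(p)\ :=\ \prod_{\epsilon\in\{\pm 1\}^k}\bigl(1-\epsilon_1\hl_1-\cdots-\epsilon_k\hl_k\bigr),
\]
a product of $2^k$ linear forms in $\hl_1,\dots,\hl_k$. Since $\Phi$ is invariant under every signed permutation of $(\hl_1,\ldots,\hl_k)$, it is a polynomial in the elementary symmetric functions of $\hl_1^2,\ldots,\hl_k^2$, each of which is a polynomial of degree $2m$ in the Pl\"ucker coordinates of $p$ obtained from the characteristic polynomial of $p^2$. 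A direct degree count then shows $\Phi$ has total degree $2^k$ in those coordinates, and irreducibility follows from a Galois-theoretic argument: the $2^k$ linear factors form a single Galois orbit under the sign-flip action on the $\hl_j$, which are algebraic over the rational function field generated by the entries of $p$.

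For the face structure, let $\ell=\langle B,\cdot\rangle$ be a supporting functional with $B\in\wedge_2\R^n$, and let $\hl_1(B)$ be the largest singular value of $B$, occurring with multiplicity $m$. The $\hl_1(B)^2$-eigenspace of $-B^2$ is a $2m$-dimensional subspace $V_F\subset\R^n$ on which $B$ restricts to $\hl_1(B)\cdot J$ for an orthogonal complex structure $J$. A unit decomposable $2$-vector $u\wedge v$ attains the maximum $\langle B,u\wedge v\rangle=\hl_1(B)$ if and only if $u,v$ span a $J$-complex line in $V_F$. These complex lines form $\C\PP^{m-1}$, on which $SU(V_F,J)\simeq SU(m)$ acts transitively, so the exposed face $\mathcal{G}_{2,n}^\ell$ is the $SU(m)$-orbitope of the statement. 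The main obstacle will be the irreducibility claim in part two: while the Galois-orbit description above outlines the idea, verifying it rigorously requires showing that the map from symmetric functions of the $\hl_j^2$ to the coordinate ring of $p$ introduces no spurious factorization over $\Q$.
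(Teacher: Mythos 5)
Your argument for the spectrahedral representation and the algebraic boundary is essentially the paper's own: reduce to the crosspolytope condition $\hl_1+\cdots+\hl_k\le 1$ via Corollary~\ref{rmk:GrassInSec3}, observe that the eigenvalues of $\Sfunc_k(i\cdot p)$ are the $k$-fold sums of $\pm\hl_1,\dots,\pm\hl_k$ (and $0$ for odd $n$), and obtain the boundary from the signed product $\prod_{\epsilon}(1-\epsilon_1\hl_1-\cdots-\epsilon_k\hl_k)$, a symmetric polynomial of degree $2^{k-1}$ in the $\hl_j^2$ and hence of degree $2^k$ in the $p_{ab}$. Where you genuinely diverge is the face structure: the paper simply cites Morgan's classification from the calibration literature, whereas you give a self-contained Wirtinger-type argument identifying the maximizing decomposable $2$-vectors of $\ell=\langle B,\cdot\rangle$ with the $J$-complex lines in the top eigenspace $V_F$ of $-B^2$; this is the standard proof of Morgan's result and is correct, and it has the advantage of making the section self-contained. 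Two small points: your face argument classifies \emph{exposed} faces, so you should add the one-line remark that all faces are exposed because $\mathcal{G}_{2,n}$ is a spectrahedron by the first part (otherwise the claim about all proper faces is not yet established); and your honest flagging of the irreducibility of the degree-$2^k$ factor as incomplete puts you at exactly the paper's level of rigor, since the paper likewise only asserts ``it can be checked that it is irreducible for $n\ge 5$.''
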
  
  
Everything in this theorem is also true for the small cases $n =  
3,4$, with the exception that the quartic hypersurface $\partial_a  
\mathcal{G}_{2,4}$ is not irreducible, as was seen in  Example \ref{ex:G24}.  
In light of Theorem~\ref{thm:facelatticeSkewSH},  
the reducibility of $\mathcal{G}_{2,4}$ arises because  
the two-dimensional crosspolytope equals the square, which   
decomposes as a  Minkowski sum of two line segments.  
This may also be seen as the stabilizer of a decomposable tensor in $SO(4)$ is $\pm I$, 
where $I$ is the identity matrix, and $SO(4)/\{\pm I\}\simeq SO(3)\times SO(3)$, so 
$\mathcal{G}_{2,4}$ is also an orbitope for $SO(3)\times SO(3)$. 
  
\begin{proof}  
We begin with the last statement about the face lattice of $\mathcal{G}_{2,n}$.  
This result is well-known in the theory of calibrations, where it is usually phrased  
as follows: {\em every face of the Grassmannian of two-planes in  
$\R^n$ consists of the complex lines in some $2m$-dimensional subspace  
of\/ $\R^n$ under some orthogonal complex structure}. See~\cite[\S 1.1]{Mor85}.  
  
To derive the spectrahedral representation  
(\ref{eq:spectraG2n}), we note that the eigenvalues of  
$\Sfunc_{k}(i \cdot p)$ are the sums of any $k$ distinct  
numbers of the eigenvalues of the skew-symmetric matrix $p$. These   
are   $ -\hl_1,\ldots -\hl_k,\hl_1 ,\ldots,\hl_k$  
if $n$ is even and  
$ -\hl_1,\ldots, -\hl_k,0,\hl_1,\ldots,\hl_k$  
if $n$ is odd.  
In light of Corollary \ref{rmk:GrassInSec3}, we can apply the  
results in Section 3.2 to conclude that $p$ lies in $\mathcal{G}_{2,n}$  
if and only if  $\,\pm \hl_1 \pm \hl_2 \pm \cdots \pm \hl_k \leq 1\,$  
 for all choices of signs. In terms of polyhedral geometry,  
 this condition means that the vector  
 $\Lambda(p) = (\hl_1,\hl_2,\ldots,\hl_k)$ lies in the  
crosspolytope $\Lambda(\mathcal{G}_{2,n})$.  
Since all $\binom{n}{k}$ eigenvalues of $\Sfunc_{k}(i \cdot p)$  
are bounded above by the maximum of the $2^k$  
special eigenvalues $\,\pm \hl_1 \pm \hl_2 \pm \cdots \pm \hl_k$,  
we conclude that $p \in \mathcal{G}_{2,n}$ if and only if  
$\,{\rm Id}_{\binom{n}{k}} - \Sfunc_{k}(i \cdot p) \,\succeq \, 0 $.  
  
To compute the algebraic boundary $\,\partial_a \mathcal{G}_{2,n}\,$  
we consider the expression  
$$  
\prod_{\sigma \in \{-1,+1\}^k}  (1+\sigma_1 \hl_1 + \sigma_2 \hl_2 + \cdots + \sigma_k \hl_k) .$$  
This is a symmetric polynomial of degree $2^{k-1}$ in the  
squared eigenvalues $\hl_1^2, \hl_2^2, \ldots,\hl_k^2$,  
and hence it can be written as a polynomial in the coefficients of the characteristic polynomial  
$$ {\rm det}\bigl(i\cdot p - x \cdot {\rm Id}_{n} \bigr) \,\, = \,\,  
x^{n \,{\rm mod} \, 2} \cdot    
(x^2 - \hl_1^2)  
(x^2 - \hl_2^2) \cdots  
(x^2 - \hl_k^2).  
 $$  
The resulting polynomial has degree $2^k$ in the entries  
$p_{ab}$ of the matrix $p $, and it vanishes on the boundary of  
the orbitope $\mathcal{G}_{2,n}$.   
 It can be checked that it is irreducible for $n \geq 5$.  
\end{proof}  
  
\begin{example}  
Let $n = 6$ and consider the characteristic polynomial  
of our Hermitian matrix:  
$$   
{\rm det}  
\begin{pmatrix}  
-x &   i p_{12} & i p_{13} & i p_{14} & i p_{15} & i p_{16} \\  
- i p_{12} & -x & i p_{23} & i p_{24} & i p_{25} & i p_{26} \\  
- i p_{13} & - i p_{23} & -x & i p_{34} & i p_{35} & i p_{36} \\  
- i p_{14} & - i p_{24} & - i p_{34} & - x &  i p_{45} & i p_{46} \\  
- i p_{15} & - i p_{25} & - i p_{35} &   i p_{45} & - x  & i p_{56} \\  
- i p_{16} & - i p_{26} & - i p_{36} &   i p_{46} & - i p_{56} & -x    
\end{pmatrix} \quad   
\begin{matrix} = &&  
x^6 + a_4 x^4 + a_2 x^2 + a_0 \\   
& & \\  
= &&   
(x^2 - \hl_1^2)(x^2 - \hl_2^2)(x^2 - \hl_3^2)  
\end{matrix}  
$$  
The algebraic boundary of the Grassmann orbitope  $ \mathcal{G}_{2,6}$   
is derived from the   
polynomial  
$$   
\prod_{\sigma \in \{\pm1\}^3} \!\!\!\!  
(1 + \sigma_1 \hl_1 +  \sigma_2 \hl_2 + \sigma_3 \hl_3 )  
\ = \  
a_4^4  
{+}4 a_4^3  
{-}8 a_4^2 a_2  
{+}16 a_2^2  
{-}16 a_4 a_2  
{+}6 a_4^2  
{+}64 a_0  
{-}8 a_2  
{+}4 a_4  
{+}1\,.  
$$  
We rewrite this expression in terms of the $15$ unknowns $p_{ij}$  
to get  an irreducible polynomial of degree $8$ with $10791$ terms.  
This is the defining polynomial of the hypersurface  $\partial_a \mathcal{G}_{2,6}$.  
For $n=7$ we use the same polynomial  
in $a_0,a_2,a_4$ but now there is one more eigenvalue $0$.  
The defining polynomial  
of $\partial_a \mathcal{G}_{2,7}$ has  
$44150$ terms of degree $8$ in the $21$ matrix entries $p_{ij}$.  
\qed  
\end{example}  
  
We now come to the  harder case $d=3, n=6$.  
The  Grassmann orbitope $\mathcal{G}_{3,6}$ is a $20$-dimensional  
convex body. Its facial structure   
was determined by Dadok and Harvey in \cite{dadok},  
and independently by Morgan in \cite{Mor85}.  
We present their well-known results in our language.  
  
\begin{theorem} \label{thm:morgan36}  
The orbitope $\mathcal{G}_{3,6}$ has  
three classes of positive-dimensional exposed faces:  
\begin{enumerate}  
\item  
one-dimensional faces from  
pairs of subspaces that satisfy the angle condition (\ref{eq:threeangles});  
\item  
three-dimensional faces that arise as $SO(3)$-orbitopes and these are round $3$-balls;  
\item  
$12$-dimensional faces that are   
$SU(3)$-orbitopes, from the  
 Lagrangian Grassmannian.  
 \end{enumerate}  
 \end{theorem}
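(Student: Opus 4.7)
The plan is to reduce the facial classification to one of linear functionals via $SO(6)$-invariance and the Fundamental Theorem of Calibrations. Every face of $\mathcal{G}_{3,6}$ lies in an $SO(6)$-orbit, so it suffices to classify representatives. An exposed face is the maximizer set of a linear functional $\phi$, and using the invariant inner product to identify $(\wedge_3\R^6)^*$ with $\wedge_3\R^6$, we view $\phi$ as a 3-form on $\R^6$. After scaling so that $\phi$ has \emph{comass one}, that is $\max\{\langle\phi,\xi\rangle : \xi\in SO(6)\cdot e_{123}\}=1$, the Fundamental Theorem of Calibrations identifies the exposed face with $\conv(G(\phi))$, where $G(\phi):=\{\xi\in SO(6)\cdot e_{123} : \langle\phi,\xi\rangle=1\}$ is the $\phi$-calibrated Grassmannian. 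The task therefore becomes classifying $SO(6)$-orbits of comass-one 3-forms and computing $G(\phi)$ for each.

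I would build this classification on the classical Schouten--Reichel classification of 3-forms in $\wedge_3\R^6$ up to $GL(6,\R)$-action, which yields a short finite list of normal forms. Refining to $SO(6)$ introduces continuous parameters, essentially the cosines of principal angles in the normal form. For each normal form I would (a) compute the supremum of $\langle\phi,\xi\rangle$ over decomposable unit $\xi$ and delineate the parameter range where that supremum equals one (so $\phi$ actually has comass one), and (b) identify the locus $G(\phi)$ achieving the supremum. The $SO(6)$-stabilizer of $\phi$ acts on $G(\phi)$ and controls the dimension of $\conv(G(\phi))$.

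Three positive-dimensional cases should then emerge. For generic $\phi$ with finite stabilizer that calibrates more than one plane, $G(\phi)$ is finite and pairs in $G(\phi)$ give $1$-dimensional faces; the simultaneous-calibration condition $\langle\phi,\xi_1\rangle=\langle\phi,\xi_2\rangle=1$ translates, via the singular-value decomposition of the embedding of one $3$-plane relative to the other, into the principal-angle condition~(\ref{eq:threeangles}). For the normal form whose stabilizer contains a diagonally embedded $SO(3)$, the set $G(\phi)$ is an $SO(3)$-orbit of planes, a $3$-sphere whose convex hull is a round $3$-ball. Finally, for the special Lagrangian form $\phi=\mathrm{Re}(dz_1\wedge dz_2\wedge dz_3)$ on $\R^6\cong\C^3$, the stabilizer is $SU(3)$ and $G(\phi)$ is the special Lagrangian Grassmannian $SU(3)/SO(3)$; its convex hull is the $12$-dimensional $SU(3)$-orbitope of part (3).

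The main obstacle is completeness and precise dimension counting. Showing that no other entry of the Schouten list yields a positive-dimensional $G(\phi)$ requires a careful case analysis: for each remaining normal form one must rule out additional maximizers via a Lagrange-multiplier argument on the Pl\"ucker quadric and verify that the supremum defining the comass is attained only on the claimed locus. Pinning down the exact dimensions $3$ and $12$ of the $SO(3)$- and $SU(3)$-orbitopes requires decomposing $\wedge_3\R^6$ as an $SU(3)$-module and identifying the submodule linearly spanned by the orbit $SU(3)\cdot e_{123}$. This representation-theoretic computation, together with the normal-form case analysis, constitutes the substantive content of Dadok--Harvey~\cite{dadok} and Morgan~\cite{Mor85}.
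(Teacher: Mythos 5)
Your proposal outlines the calibration-theoretic classification that is the actual content of Dadok--Harvey \cite{dadok} and Morgan \cite{Mor85}, whereas the paper does not reprove the theorem at all: it attributes the classification to those references and confines itself to exhibiting each face type concretely --- the angle condition (\ref{eq:threeangles}) for the exposed edges, the linear functional $p_{123}-p_{156}+p_{246}-p_{345}$ together with the eight affine equations cutting out $\mathcal{SL}_{3,6}$ (whence the dimension count $20-8=12$, with no $SU(3)$-module decomposition required), and the identification of the type-(2) faces as intersections of two special Lagrangian facets, computed inside the copy of $\mathcal{G}_{2,4}$ from Example~\ref{ex:G24}. So your route is the ``real'' proof and the paper's is exposition plus citation; since you candidly defer the completeness of the normal-form case analysis (ruling out further positive-dimensional calibrated loci) to \cite{dadok} and \cite{Mor85}, you end up at essentially the same level of rigor as the paper, just organized around the classification of comass-one forms rather than around explicit supporting functionals. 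What the paper's presentation buys that your sketch does not is the extra structural information that the type-(2) faces are not facets but intersections of two special Lagrangian facets.

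One concrete slip: in case (2) the calibrated locus $G(\phi)$ must be a round $2$-sphere, an orbit $SO(3)/SO(2)$ of $3$-planes, not a $3$-sphere --- the convex hull of a round $3$-sphere is a $4$-ball, contradicting the claimed dimension three. Indeed, in the paper's construction these faces sit inside $\mathcal{G}_{2,4}\simeq B^3\times B^3$ as a slice $\{\mathrm{pt}\}\times B^3$, whose set of extreme points is an $S^2$. Relatedly, for case (1) you should say that the segment $\conv\{p,p'\}$ is an exposed face only when $G(\phi)$ consists of exactly the two points $p,p'$; a finite $G(\phi)$ with more elements would a priori give a polytopal exposed face, and excluding such faces is part of the completeness argument you are deferring.
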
  
  
Harvey and Morgan \cite{HarMor86} extended these results  
to $\mathcal{G}_{3,7}$. We will here focus on $d=3, n=6$.  
Our first goal is to explain  
 the faces described in Theorem \ref{thm:morgan36}, starting with the edges.  
 Let $L$ and $L'$ be three-dimensional linear subspaces in $\R^6$  
 with corresponding unit length Pl\"ucker vectors  $p$ and $p'$.  
 We define $\theta_1 (L,L')$ to be the minimum angle between any  
 unit vector $v_1 \in L$ and any unit vector $w_1 \in L'$.  
 Next, $\theta_2(L,L')$ is the minimum angle  
 between two unit vectors $v_2 \in L$ and $w_2 \in L'$ such that  
 $v_2 \perp v_1$ and $w_2 \perp w_1$. Finally, we define  
 $\theta_3(L,L')$  to be the angle  
 between two unit vectors $v_3 \in L$ and $w_3 \in L'$ such that  
 $v_3 \perp \{v_1,v_2\}$ and $w_3 \perp \{ w_1,w_2\}$.  
We refer to \cite[Lemma 2.3]{Mor85} for the fact that  
$\theta_2$ and $\theta_3$ are well defined by this rule.  
 The angle condition referred to in part (1) of Theorem  \ref{thm:morgan36}  
 is the inequality  
  \begin{equation}  
 \label{eq:threeangles}  
 \theta_3 (L,L') \,<\, \theta_1(L,L') + \theta_2 (L,L').  
 \end{equation}  
  The convex hull of $p$ and $p'$ is an exposed edge of $\mathcal{G}_{3,6}$  
  if and only if the condition (\ref{eq:threeangles}) holds.  
   
 The maximal facets in part (3) of Theorem \ref{thm:morgan36} are  
 known to geometers  as {\em special Lagrangian facets},  
 and we represent them as orbitopes as follows.  
 The {\em special unitary group} $SU(3)$ consists of  
 all complex $3 {\times} 3$-matrices $U$ with ${\rm det}(U)= 1$  
 and $U U^* = {\rm Id}_3$. If $A = {\rm re}(U)$ and $B = {\rm im}(U)$,  
 so that $U = A + iB$, then we can identify $U$ with the real $6 {\times} 6$-matrix  
 $$ \tilde U \,\,\,  = \,\,\,  \begin{pmatrix}  
\,A & -B \, \\  
 \,B &\phantom{-} A \,\end{pmatrix} .$$  
  Note that this matrix lies in $SO(6)$ if and only if  
 $A A^T +B B^T = {\rm Id}_3$,  
 $\,A B^T = B A^T$, and ${\rm det}(A+iB) =1$.  
Hence the transformation $U \mapsto \tilde U$ realizes  
 $SU(3)$ as a subgroup of $SO(6)$.   
  
The orbit $\,SU(3) \cdot e_1 {\wedge} e_2 {\wedge} e_3\,$  
is known as the {\em special Langrangian Grassmannian}.  
This is a five-dimensional real algebraic variety in the  
unit sphere in $\wedge_3 \R^6$.  We call its convex hull  
$$\,\mathcal{SL}_{3,6} \,=\, {\rm conv} (SU(3) \cdot e_1 {\wedge} e_2 {\wedge} e_3) $$  
the {\em special Lagrangian orbitope}.  
It is obtained from the $20$-dimensional Grassmann orbitope  
$\mathcal{G}_{3,6}$  by maximizing the linear function  
 $\, p_{123} - p_{156} + p_{246} - p_{345} $, which takes  
value $1$ on that face. The face $\mathcal{SL}_{3,6}$  
 is $12$-dimensional  
because it satisfies  the two affine equations  
$$ p_{123} - p_{156} + p_{246} - p_{345}   
\, = \, {\rm re}( A + iB) \,=\,1  
\,\,\, \hbox{and} \,\,\,  
 p_{126} - p_{135} + p_{234} - p_{456}    
\, = \, {\rm im}( A + iB)   
\,=\, 0 $$  
plus the six independent linear equations  
that cut out the Langrangian Grassmannian:  
$$  
p_{125} + p_{136} \,=\,  
p_{134} + p_{235} \,=\,  
p_{124} - p_{236} \,=\,  
p_{146} + p_{256} \,=\,  
p_{245} + p_{346} \,=\,  
p_{145} - p_{356} \,\, =\,\, 0.  
$$  
  
  The exposed faces of type (2) in Theorem \ref{thm:morgan36}  
 are not facets. Each of them is the intersection of two special Lagrangian facets.  
 For example, consider the two linear functionals  
 $$   
 \phi_+ \, = \, p_{123} - p_{156} + p_{246} - p_{345}  
\quad \hbox{ and } \quad  
 \phi_- \, = \, p_{123} - p_{156} - p_{246} + p_{345}, $$  
 which were discussed in \cite[\S 4.4]{Mor85}. Each of them supports a  
 special Lagrangian facet. The intersection of these two   
 $14$-dimensional facets is a three-dimensional ball.   
Indeed, the linear functional $\frac{1}{2} ( \phi_+ + \phi_-) = p_{123} - p_{156}$   
is bounded above by $1$ on $\mathcal{G}_{3,6}$, and the subset  
at which the value equals $1$ is contained in the linear span of  
the six basis vectors $e_1 \wedge e_i \wedge e_j$ where  
$i,j \in \{2,3,5,6\}$. In the intersection of this linear space with   
$\mathcal{G}_{3,6}$ we find the Grassmann orbitope  
 $\mathcal{G}_{2,4}$ from Example \ref{ex:G24}, with   
    our linear functional  $p_{123} - p_{156}$ being represented by  
  the scaled coordinate $\sqrt{2} x$.  
  The face of $\mathcal{G}_{2,4}$ where $\sqrt{2} x$ attains  
  its maximal value $1$ is a $3$-ball.  
  This concludes our discussion of the   
  census of faces given in Theorem \ref{thm:morgan36}.  
    
  \smallskip  
  
  A natural question that arises next is  
     whether the Grassmann orbitope $\mathcal{G}_{3,6}$  
or its dual body  $\mathcal{G}_{3,6}^\circ $ can be represented  
  as a spectrahedron. It turns out that the answer is negative.  
  
\begin{theorem} \label{thm:G36bad}  
The Grassmann coorbitope  $\mathcal{G}_{3,6}^\circ$  
has extreme points that are not exposed.  
The Grassmann orbitope  $\mathcal{G}_{3,6}$  
has edges that are not exposed. Neither of them  
is a spectrahedron.  
\end{theorem}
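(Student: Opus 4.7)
The plan is to prove each assertion of Theorem~\ref{thm:G36bad} by exhibiting explicit non-exposed faces, from which the non-spectrahedron conclusions follow at once from the fact (recalled in Subsection~2.3) that every face of a spectrahedron is exposed.

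For the first assertion, the non-exposed edges of $\mathcal{G}_{3,6}$, I would build on the classification in Theorem~\ref{thm:morgan36}: the exposed edges are precisely the segments $\conv\{p,p'\}$ between decomposable unit tensors whose associated subspaces $L, L'$ satisfy the strict angle condition $\theta_3(L,L') < \theta_1(L,L') + \theta_2(L,L')$. I would take a degenerate pair $(L_0, L_0')$ with $\theta_3 = \theta_1 + \theta_2$, chosen so that $L_0 \cup L_0'$ does not lie in either a common three-ball configuration or a common special Lagrangian subspace. The segment $F := \conv\{p_0, p_0'\}$ arises as a Hausdorff limit of exposed edges from nearby strict-inequality pairs, and since the set of faces of a compact convex body is closed under Hausdorff limits, $F$ is a face of $\mathcal{G}_{3,6}$. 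If $F$ were exposed, it would appear as a one-dimensional member of the three families listed in Theorem~\ref{thm:morgan36}; since by construction $F$ fits none of those descriptions, $F$ is a face that is not exposed.

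For the second assertion, a non-exposed extreme point of the coorbitope $\mathcal{G}_{3,6}^\circ$, I would argue by polarity. The ``generic'' extreme points of $\mathcal{G}_{3,6}^\circ$ are the normalized linear functionals exposing the $12$-dimensional special Lagrangian facets, and they form a $7$-dimensional $SO(6)$-orbit in $\mathcal{G}_{3,6}^\circ$. I would pass to the closure of this orbit and identify a limit functional $\ell_0$ with the property that the face $\{v \in \mathcal{G}_{3,6} : \inner{v, \ell_0} = 1\}$ is a $3$-ball of type~(2), not a full $12$-dimensional facet. Such $\ell_0$ is still extreme in $\mathcal{G}_{3,6}^\circ$ as a limit of extreme points, but it cannot be exposed: any $v$ achieving $\inner{v, \ell_0} = 1$ lies in the limiting $3$-ball, which is simultaneously contained in nearby special Lagrangian facets, so $v$ also pairs to $1$ with nearby distinct extreme functionals $\ell$. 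Thus no point of $\mathcal{G}_{3,6}$ singles out $\ell_0$, and $\ell_0$ is a non-exposed extreme point of $\mathcal{G}_{3,6}^\circ$.

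Having produced a non-exposed edge of $\mathcal{G}_{3,6}$ and a non-exposed extreme point of $\mathcal{G}_{3,6}^\circ$, I would then read off both non-spectrahedron claims immediately. The main obstacle I expect is the calibration-theoretic verification that in the first step the limiting segment $F$ really is a face of $\mathcal{G}_{3,6}$ (rather than lying in the relative interior of a larger exposed face in Morgan's list) while also failing to be exposed. This requires tracking how the Harvey--Lawson calibrating form degenerates at the boundary $\theta_3 = \theta_1 + \theta_2$ of the angle condition and confirming that the limiting supporting hyperplane exposes a face strictly containing $F$, drawing on \cite{HL1982, HarMor86, Mor85}. An analogous calibration-level analysis underpins the second step, where one must rule out the existence of a separating direction picking out $\ell_0$ alone from the limiting family of facet normals.
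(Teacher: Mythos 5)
Your overall strategy — exhibit a segment between Pl\"ucker vectors of a pair $(L,L')$ with $\theta_3=\theta_1+\theta_2$, observe it cannot be exposed by the classification in Theorem~\ref{thm:morgan36}, and deduce the non-spectrahedron claims from exposedness of all faces of a spectrahedron — is the right one, and it matches the paper's choice of witness. But the step on which everything hinges is wrong as stated: you assert that $F=\conv\{p_0,p_0'\}$ is a face of $\mathcal{G}_{3,6}$ because it is a Hausdorff limit of exposed edges and ``the set of faces of a compact convex body is closed under Hausdorff limits.'' That principle is false. Already in $\R^3$, the convex hull of the circle $\{x^2+y^2=1,\ z=0\}$ together with the points $(1,0,\pm 1)$ has the point $(1,0,0)$ as a limit of exposed extreme points, yet $(1,0,0)$ lies in the relative interior of a segment and is not a face at all. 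So the limit of your exposed edges could a priori sit in the relative interior of a larger face, which is exactly the possibility you flag as ``the main obstacle'' but never close. The same false principle reappears in your coorbitope argument, where you claim $\ell_0$ ``is still extreme \dots as a limit of extreme points.''

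The paper closes this gap by a different mechanism: it shows the degenerate segment is an \emph{exposed edge of the special Lagrangian facet} $\mathcal{SL}_{3,6}$, and since a face of a face is a face, the segment is then a genuine (non-exposed) face of $\mathcal{G}_{3,6}$. Concretely, one restricts to the coordinate subspace $V$ spanned by $e_{123},e_{126},\dotsc,e_{456}$, where $\mathcal{SL}_{3,6}\cap V$ becomes a two-torus orbitope; one checks it is $2$-neighborly by producing, for any two parameter values, a bivariate trigonometric polynomial vanishing to second order there and positive elsewhere, and then certifies (via \cite[Theorem 12(i)]{dadok}, or computationally by a rank-two optimal Lasserre moment matrix) that the particular segment is exposed in $\mathcal{SL}_{3,6}$. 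For the coorbitope statement the paper does not argue by polarity or limits at all; it simply invokes \cite[Theorem 7]{dadok}. To repair your write-up you would need to replace both limit arguments with an argument that locates the candidate face inside a known exposed face of the ambient body (or cite Dadok--Harvey directly).
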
  
  
\begin{proof}  
The first assertion was proved by  
Dadok and Harvey in \cite[Theorem 7]{dadok}.  
For the second assertion we proceed as follows.  
We apply the technique in \cite[Lemma 2.2]{Mor85}  
and restrict to the linear subspace   
$\,V = \R \{e_{123},e_{126},e_{135},e_{234},e_{156},e_{246},e_{345},e_{456}\}  
$ of $\wedge_3 \R^6$.  
The intersection of $\mathcal{G}_{3,6} \cap V$ is  
the $SO(2) \times SO(2) \times SO(2)$-orbitope of  
 $e_{123}$, where the action is by unitary  
diagonal $3 {\times} 3$-matrices,  
 while the intersection  $\mathcal{SL}_{3,6} \cap V$   
is the $SO(2) \times SO(2)$-orbitope of $ e_{123}$,  
where the action is by special unitary  
 diagonal $3 {\times} 3$-matrices, as described in \cite[\S 4.3]{Mor85}.  
  
We claim that the  $SO(2) \times SO(2)$-orbitope  
$\mathcal{SL}_{3,6} \cap V$ is $2$-neighborly. This can be seen by   
examining the bivariate  
 trigonometric polynomials of the form  
$$ \begin{matrix}  
f \ = & \mbox{\ \quad}
\ x_{123} \cdot {\rm cos}(\alpha)  {\rm cos}(\beta)  {\rm cos}(-\alpha -\beta)  
\,\,+\,\,x_{126} \cdot {\rm cos}(\alpha)  {\rm cos}(\beta)  {\rm sin}(-\alpha -\beta)\\ &  
\,\,-\,\,x_{135}\cdot {\rm cos}(\alpha)  {\rm sin}(\beta)  {\rm cos}(-\alpha -\beta)   
\,\,+\,\,x_{234} \cdot {\rm sin}(\alpha)  {\rm cos}(\beta)  {\rm sin}(-\alpha -\beta) \\ &  
\,\,+\,\,x_{156} \cdot {\rm cos}(\alpha)  {\rm sin}(\beta)  {\rm sin}(-\alpha -\beta)   
\,\,-\,\,x_{246} \cdot {\rm sin}(\alpha)  {\rm cos}(\beta)  {\rm sin}(-\alpha -\beta) \\ &  
\,\,+\,\,x_{345} \cdot {\rm sin}(\alpha)  {\rm sin}(\beta)  {\rm cos}(-\alpha -\beta)  
\,\,-\,\,x_{456} \cdot {\rm sin}(\alpha)  {\rm sin}(\beta)  {\rm sin}(-\alpha -\beta).  
\end{matrix}  
$$  
Indeed, for any choice of  $(\alpha_1, \beta_1) $  
and $(\alpha_2,\beta_2)$, we can find  
 eight real coefficients $\,x_{ijk}\,$  
such that $f$ and its derivatives vanish at  
  $(\alpha_1, \beta_1) $  
and $(\alpha_2,\beta_2)$  
but $f$ is strictly positive elsewhere.  
  
The results in \cite{Mor85} imply that every 
exposed edge of $\mathcal{G}_{3,6} \cap V$ is  also 
an exposed edge of $\mathcal{G}_{3,6}$. We believe 
that Morgan's technique can be adapted to show that 
{\em every} exposed edge of $\mathcal{SL}_{3,6} \cap V$ is 
an exposed edge of $\mathcal{SL}_{3,6}$. For the purpose 
of proving Theorem  \ref{thm:G36bad}, however, we only need to identify 
{\em one} exposed edge of $\mathcal{SL}_{3,6} \cap V$ that is 
an exposed edge of $\mathcal{SL}_{3,6}$. 
 
The claim that such exposed edges exist can be derived 
from \cite[Theorem 12 (i)]{dadok}. With the help of Philipp Rostalski, 
we also obtained a computational proof of that claim. This was done as follows. 
 We selected various 
random choices of points  $(\alpha_1, \beta_1) $ and $  
(\alpha_2,\beta_2)$ 
in $\R^2$. Each choice specifies two 
three-dimensional subspaces $L$ and $L'$ of $\R^6$  for which equality   
holds in 
the angle condition (\ref{eq:threeangles}). The corresponding 
line segment is not an exposed edge of $\mathcal{G}_{3,6}$. 
 
To show that the line segment between $L$ and $L'$ is 
exposed in $\mathcal{SL}_{3,6}$, we use a technique from 
semidefinite programming. 
First we compute the eight coefficients $x_{ijk}$ 
of the supporting function $f$ as above.  This gives 
a linear function $\sum x_{ijk} p_{ijk}$ on $\wedge_3 \R^6$. 
We then run a first-order Lasserre relaxation 
(cf.~\cite{Lasserre}) to minimize 
this linear function subject to the linear and quadratic constraints 
that cut out the special Lagrangian Grassmannian. 
The optimal value is zero, the optimal 
Lasserre moment matrix  has rank two, and its image in 
$\wedge_3 \R^6$ lies in the relative interior 
of the line segment between $L$ and $L'$. 
We then re-optimize for various perturbations of 
the linear function $\sum x_{ijk} p_{ijk}$. The 
output of each run is a rank one moment matrix which certifies 
either $L$ or $L'$ as optimal solution of the optimization problem. 
This proves that the face of  $\mathcal{SL}_{3,6}$ 
exposed by $\sum x_{ijk} p_{ijk}$ is the line segment between $L$ and~$L'$. 
\end{proof} 
 
Theorem \ref{thm:G36bad}  shows that Grassmann orbitopes 
are generally not spectrahedra. We do not know whether they are 
%{\em spectrahedral shadows}, that is, 
linear projections of spectrahedra. 
 
%%%%%%%%%%%%%%%%%%%%%%%%%%%%%%%%%%%%%%%%%%%%%%%%%%%%%%%%%%%%%%%%%%%%%%%%%%%%%   
\def\cprime{$'$}
\providecommand{\bysame}{\leavevmode\hbox to3em{\hrulefill}\thinspace}
\providecommand{\MR}{\relax\ifhmode\unskip\space\fi MR }
% \MRhref is called by the amsart/book/proc definition of \MR.
\providecommand{\MRhref}[2]{%
  \href{http://www.ams.org/mathscinet-getitem?mr=#1}{#2}
}
\providecommand{\href}[2]{#2}


\begin{thebibliography}{10}

\bibitem{At82}
M.~F. Atiyah, \emph{Convexity and commuting {H}amiltonians}, Bull. London Math.
  Soc. \textbf{14} (1982), no.~1, 1--15.

\bibitem{AII06}
David Avis, Hiroshi Imai, and Tsuyoshi Ito, \emph{On the relationship between
  convex bodies related to correlation experiments with dichotomic
  observables}, J. Phys. A \textbf{39} (2006), no.~36, 11283--11299.

\bibitem{Barvinok}
Alexander Barvinok, \emph{A course in convexity}, Graduate Studies in
  Mathematics, vol.~54, American Mathematical Society, Providence, RI, 2002.

\bibitem{BB}
Alexander Barvinok and Grigoriy Blekherman, \emph{Convex geometry of orbits},
  Combinatorial and computational geometry, Math. Sci. Res. Inst. Publ.,
  vol.~52, Cambridge Univ. Press, 2005, pp.~51--77.

\bibitem{BN}
Alexander Barvinok and Isabella Novik, \emph{A centrally symmetric version of
  the cyclic polytope}, Discrete Comput. Geom. \textbf{39} (2008), no.~1-3,
  76--99.

\bibitem{BPR}
Saugata Basu, Richard Pollack, and Marie-Fran{\c{c}}oise Roy, \emph{Algorithms
  in real algebraic geometry}, second ed., Algorithms and Computation in
  Mathematics, vol.~10, Springer-Verlag, Berlin, 2006.

\bibitem{Ble}
Grigoriy Blekherman, \emph{Dimensional differences between nonnegative
  polynomials and sums of squares}, {\tt arXiv:0907.1339}.

\bibitem{Car}
Constantin Carath\'eodory, \emph{{\"Uber den Variabilit\"atsbereich der
  Koeffizienten von Potenzreihen die gegebene Werte nicht annehmen}},
  Mathematische Annalen \textbf{64} (1907), 95--115.

\bibitem{dadok}
Jiri Dadok and Reese Harvey, \emph{Calibrations on {${\bf R}\sp{6}$}}, Duke
  Math. J. \textbf{50} (1983), no.~4, 1231--1243.

\bibitem{Fan51}
Ky~Fan, \emph{Maximum properties and inequalities for the eigenvalues of
  completely continuous operators}, Proc. Nat. Acad. Sci. USA \textbf{37}
  (1951), 760--766.

\bibitem{FR94}
H.~R. Farran and S.~A. Robertson, \emph{Regular convex bodies}, J. London Math.
  Soc. (2) \textbf{49} (1994), no.~2, 371--384.

\bibitem{fenchel29}
Werner Fenchel, \emph{\"{U}ber {K}r\"ummung und {W}indung geschlossener
  {R}aumkurven}, Math. Ann. \textbf{101} (1929), no.~1, 238--252.

\bibitem{FH}
William Fulton and Joe Harris, \emph{Representation theory}, Graduate Texts in
  Mathematics, vol. 129, Springer-Verlag, New York, 1991.

\bibitem{GKZ}
Israel~M. Gel{\cprime}fand, Mikhael~M. Kapranov, and Andrei~V. Zelevinsky,
  \emph{Discriminants, resultants, and multidimensional determinants},
  Mathematics: Theory \& Applications, Birkh\"auser, Boston, MA, 1994.

\bibitem{GS82}
V.~Guillemin and S.~Sternberg, \emph{Convexity properties of the moment
  mapping}, Invent. Math. \textbf{67} (1982), no.~3, 491--513.

\bibitem{HL1982}
Reese Harvey and H.~Blaine Lawson, Jr., \emph{Calibrated geometries}, Acta
  Math. \textbf{148} (1982), 47--157.

\bibitem{HarMor86}
Reese Harvey and Frank Morgan, \emph{The faces of the {G}rassmannian of
  three-planes in {${\bf R}\sp 7$} (calibrated geometries on {${\bf R}\sp
  7$})}, Invent. Math. \textbf{83} (1986), no.~2, 191--228.

\bibitem{HelNie}
J.~William Helton and Jiawang Nie, \emph{Sufficient and necessary conditions
  for semidefinite representability of convex hulls and sets}, SIAM J. Optim.
  \textbf{20} (2009), no.~2, 759--791.

\bibitem{henrion}
Didier Henrion, \emph{Semidefinite representation of convex hulls of rational
  varieties}, {\tt ArXiv:0901.1821}.

\bibitem{hilbert}
David Hilbert, \emph{\"{U}ber die {D}arstellung definiter {F}ormen als {S}umme
  von {F}ormenquadraten}, Mathematische Annalen \textbf{32} (1888), 342--350.

\bibitem{Lasserre}
Jean~B. Lasserre, \emph{Global optimization with polynomials and the problem of
  moments}, SIAM J. Optim. \textbf{11} (2000/01), no.~3, 796--817 (electronic).

\bibitem{Leite99}
R.~S. Leite, T.~R.~W. Richa, and C.~Tomei, \emph{Geometric proofs of some
  theorems of {S}chur-{H}orn type}, Linear Algebra Appl. \textbf{286} (1999),
  no.~1-3, 149--173.

\bibitem{LSS08}
Marco Longinetti, Luca Sgheri, and Frank Sottile, \emph{Convex hulls of orbits
  and orientations of a moving protein domain}, Discrete Comput. Geom.
  \textbf{43} (2010), no.~1, 54--77.

\bibitem{MR95}
T.~M. Madden and S.~A. Robertson, \emph{The classification of regular solids},
  Bull. London Math. Soc. \textbf{27} (1995), no.~4, 363--370.

\bibitem{Zobin}
Nicholas McCarthy, David Ogilvie, Nahum Zobin, and Veronica Zobin, \emph{Convex
  geometry of {C}oxeter-invariant polyhedra}, Trends in {B}anach spaces and
  operator theory ({M}emphis, {TN}, 2001), Contemp. Math., vol. 321, Amer.
  Math. Soc., Providence, RI, 2003, pp.~153--179.

\bibitem{Mor85}
Frank Morgan, \emph{The exterior algebra {$\Lambda\sp k{\bf R}\sp n$} and area
  minimization}, Linear Algebra Appl. \textbf{66} (1985), 1--28.

\bibitem{Ranestad}
Jiawang Nie, Kristian Ranestad, and Bernd Sturmfels, \emph{The algebraic degree
  of semidefinite programming}, Mathematical Programming \textbf{122} (2010),
  379--405.

\bibitem{NieStu}
Jiawang Nie and Bernd Sturmfels, \emph{Matrix cubes parametrized by
  eigenvalues}, SIAM J. Matrix Analysis Appl. \textbf{31} (2009), 755--766.

\bibitem{onn93}
Shmuel Onn, \emph{Geometry, complexity, and combinatorics of permutation
  polytopes}, J. Combin. Theory Ser. A \textbf{64} (1993), no.~1, 31--49.

\bibitem{Ranestad2}
Kristian Ranestad and Bernd Sturmfels, \emph{The convex hull of a variety}, to
  appear in the Julius Borcea Memorial Volume, (eds. Petter Br\"and\'en, Mikael
  Passare and Mihai Putinar), Trends in Mathematics, Birkh\"auser Verlag, {\tt
  arXiv:1004.30}.

\bibitem{Fazel07}
Benjamin Recht, Maryam Fazel, and Pablo~A. Parrilo, \emph{Guaranteed
  minimum-rank solutions of linear matrix equations via nuclear norm
  minimization}, SIAM Rev. \textbf{52} (2010), no.~3, 471--501.

\bibitem{Rez}
Bruce Reznick, \emph{Sums of even powers of real linear forms}, Mem. Amer.
  Math. Soc. \textbf{96} (1992), no.~463, viii+155.

\bibitem{schur23}
Issai Schur, \emph{\"{U}ber eine {K}lasse von {M}ittelbildungen mit
  {A}nwendungen auf die {D}eterminantentheorie}, Theorie Sitzungsber, Berlin,
  Math. Gesellschaft \textbf{22} (1923), 9--20.

\bibitem{Smi}
Zeev Smilansky, \emph{Convex hulls of generalized moment curves}, Israel J.
  Math. \textbf{52} (1985), no.~1-2, 115--128.

\bibitem{SU}
Bernd Sturmfels and Caroline Uhler, \emph{Multivariate Gaussians, semidefinite
  matrix completion and convex algebraic geometry}, Annals of the Institute of
  Statistical Mathematics \textbf{62} (2010), 603--638.

\bibitem{Vinz}
Cynthia Vinzant, \emph{Edges of the Barvinok-Novik orbitope}, to appear in
  Discrete Comput.~Geometry, {\tt arXiv:1003.4528}.

\bibitem{Weis}
Stefan Weis, \emph{A note on touching cones and faces}, {\tt arXiv:1010.2991}.

\bibitem{Ziegler}
G{\"u}nter~M. Ziegler, \emph{Lectures on polytopes}, Graduate Texts in
  Mathematics, vol. 152, Springer-Verlag, New York, 1995.

\end{thebibliography}
\end{document}